\theoremstyle{plain}
\newtheorem{theorem*}{Theorem}
\newtheorem*{lemma*}{Lemma}
\newtheorem{corollary*}{Corollary}
\newtheorem*{proposition*}{Proposition}
\newtheorem{conjecture*}{Conjecture}
\newtheorem*{prop*}{Proposition}
\newtheorem{theorem}{Theorem}[section]
\newtheorem{lemma}[theorem]{Lemma}
\newtheorem{corollary}[theorem]{Corollary}
\newtheorem{proposition}[theorem]{Proposition}
\newtheorem*{thm*}{Theorem}
\theoremstyle{remark}
\newtheorem{definition}[theorem]{Definition}
\newtheorem*{example*}{Example}
\newtheorem*{claim}{Claim}
\theoremstyle{definition}
\newtheorem*{defn*}{Definition}
\def\SFH{\mathit{SFH}}
\def\op{\operatorname}
 \def\Q{\Bbb{Q}} \def\F{\Bbb{F}} \def\Z{\Bbb{Z}} \def\R{\Bbb{R}} 
\def\N{\Bbb{N}}  \def\l{\lambda}  
 \def\a{\alpha} \def\g{\gamma}  \def\bp{\begin{pmatrix}}
\def\sm{\setminus} \def\ep{\end{pmatrix}} \def\bn{\begin{enumerate}} 
   \def\en{\end{enumerate}}
\def\ba{\begin{array}} \def\ea{\end{array}}  
   \def\a{\alpha} \def\b{\beta} \def\ti{\tilde}
  \def\im{\op{Im}} 
\def\be{\begin{equation}} \def\ee{\end{equation}} 
 \def\hom{\op{Hom}}  
   \def\eps{\epsilon}
\def\zx{\Z[x^{\pm 1}]}    \def\rk{\op{rank}}
\def\spin{\op{Spin}}
\def\supp{\op{Supp}}
\def\spinc{\operatorname{Spin}^c}
\def\min{\op{min}}
\def\s{\mathfrak{s}}
\def\t{\mathfrak{t}}
\newcommand{\bZ}{{\mathbb Z}}
\newcommand{\fs}{\mathfrak{s}}
\newcommand{\ft}{\mathfrak{t}}
\newcommand{\mF}{\mathcal{F}}
\newcommand{\De}{\Delta}
\newcommand{\la}{\lambda}
\newcommand{\al}{\alpha}
\newcommand{\ga}{\gamma}
\def\MM{\mathcal{M}}
\def\CC{\mathcal{C}}
\def\FF{\mathcal{F}}
\newcommand{\bdd}{\partial}
\begin{document}

%\title[Taut depth one foliations II]{The sutured Floer polytope and taut depth one foliations II}
\title[Sutured Floer homology, fibrations, and foliations]{Sutured Floer homology, fibrations, and taut depth one foliations}
\author{Irida Altman}
\email{irida.altman@gmail.com}
\author{Stefan Friedl}
\address{Fakult\"at f\"ur Mathematik, Universit\"at Regensburg, 93040 Regensburg, Germany }
\email{sfriedl@gmail.com}
\author{Andr\'as Juh\'asz}
\thanks{AJ was supported by a Royal Society Research Fellowship and OTKA grant NK81203}
\address{Mathematical Institute, University of Oxford, Andrew Wiles Building, Radcliffe Observatory Quarter, Woodstock Road, Oxford, OX2 6GG, UK}
\email{juhasza@maths.ox.ac.uk}

%\date{\today}
\begin{abstract}
For an oriented irreducible 3-manifold~$M$ with non-empty toroidal boundary, we describe how sutured Floer homology ($\SFH$)
can be used to determine all fibred classes in $H^1(M)$.
Furthermore, we show that the $\SFH$ of a balanced sutured manifold $(M,\g)$
detects which classes in $H^1(M)$ admit a taut depth one foliation such that the only compact
leaves are the components of~$R(\g)$. The latter had been proved earlier by the first author under
the extra assumption that~$H_2(M)=0$. The main technical result is that we can obtain an extremal
$\spinc$-structure~$\s$ (i.e., one that is in a `corner' of the support of $\SFH$)
via a nice and taut sutured manifold decomposition even when~$H_2(M) \neq 0$,
assuming the corresponding group $SFH(M,\g,\s)$ has non-trivial Euler characteristic.
\end{abstract}
\maketitle

%================================================
\section{Introduction}

Ozsv\'ath and Szab\'o~\cite{OS04a, OS04b} defined Heegaard Floer homology, a package of invariants for closed $3$-manifolds.
This was later extended by them~\cite{OS04c, OS08} and Rasmussen~\cite{Ras03} to knots and links. Sutured Floer homology, introduced by
the third author~\cite{Ju06}, is a common generalisation of the hat versions of Heegaard Floer homology and knot Floer homology
to $3$-manifolds with boundary decorated with a dividing set, called sutured manifolds.
In particular, the invariant assigned to the sutured manifold complementary to a link is link Floer homology.

Ghiggini~\cite{Gh08}, Ni~\cite{Ni07, Ni09a}, and the third author~\cite{Ju08, Ju10} showed that knot Floer homology detects fibred knots.
Furthermore Ni~\cite{Ni09} proved that~$HF^+$ detects whether a closed irreducible 3-manifold fibres over~$S^1$
if the genus of the fibre is greater than one.
Also, Ai and Yi~\cite{AN09} showed that Heegaard Floer homology with twisted coefficients detects torus bundles.
It is now natural to ask if Heegaard Floer type invariants can also detect whether an arbitrary
3-manifold with non-empty boundary is fibred (in which case the boundary necessarily has to be toroidal).
If~$M$ is an irreducible $3$-manifold such that  the map $H_2(\partial M;\Q) \to H_2(M;\Q)$ is surjective, that is, if~$M$ is an irreducible $3$-manifold which is the exterior of a link in a rational homology sphere,
then, as we shall see in Proposition~\ref{prop:fibred}, it is straightforward that the methods of~\cite{Ju08, Ju10}
imply that sutured Floer homology detects whether or not~$M$ is fibred.
In this paper, we show that this restriction on~$M$ can be dropped, in particular,  we will prove the following theorem
(for a precise formulation, we refer the reader to Section~\ref{section:fibred}).

\begin{theorem}\label{mainthm-fib}
Sutured Floer homology detects whether or not an irreducible 3-manifold~$M$ with non-empty boundary is fibred,
and determines all fibred classes in~$H^1(M)$.
\end{theorem}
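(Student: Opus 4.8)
The plan is to reduce the general case to the known results of Ghiggini, Ni, and Juh\'asz on fibredness detection by sutured manifold decompositions, using the new technical result (the one advertised in the abstract) to extract a \emph{nice} and \emph{taut} decomposing surface representing a candidate fibred class even when $H_2(M)\neq 0$. First I would recall the sutured setup: equip $M$ with a toroidal boundary and choose sutures $\gamma$ on $\partial M$ (when $\partial M$ is a union of tori one can take two oppositely-oriented meridional sutures on each component, or more generally the canonical sutures coming from a Thurston-norm-minimizing surface). A class $\phi\in H^1(M)$ is fibred if and only if $M$ fibres over $S^1$ with a fiber Poincar\'e dual to $\phi$; the fibre is then a Thurston-norm minimizing surface, and after capping we expect $\SFH$ of the complementary sutured manifold to be $\bZ$ in the extremal $\spinc$-structure determined by $\phi$, with all other $\spinc$-structures in that ``vertical'' direction vanishing. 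So the proof has two directions.

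For the forward (easy) direction, I would argue that if $M$ is fibred with fibre $F$ dual to $\phi$, then decomposing $(M,\gamma)$ along $F$ yields the product sutured manifold $(F\times I, \partial F\times I)$, whose $\SFH$ is $\bZ$; by the decomposition formula of Juh\'asz this identifies a summand of $\SFH(M,\gamma)$ supported in a single outermost $\spinc$-structure $\s_\phi$, so $\SFH(M,\gamma,\s_\phi)\cong\bZ$ and, crucially, $\s_\phi$ is extremal in the $\phi$-direction with $\SFH$ vanishing at all $\spinc$-structures that are more extreme — this is exactly the numerical fingerprint we will detect. The main content is the converse: suppose $\phi\in H^1(M)$ satisfies this $\SFH$ fingerprint (a rank-one outermost group with Euler characteristic $\pm1$, and appropriate vanishing); we must show $M$ fibres with fibre dual to $\phi$. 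Here I would invoke the main technical theorem of the paper: since $SFH(M,\gamma,\s)$ has non-trivial Euler characteristic at the relevant extremal $\s=\s_\phi$, there is a \emph{nice} and \emph{taut} sutured manifold decomposition $(M,\gamma)\rightsquigarrow(M',\gamma')$ along a surface $S$ with $[S]$ corresponding to $\phi$, realizing $\s_\phi$ — so that $\SFH(M',\gamma')\cong SFH(M,\gamma,\s_\phi)\cong\bZ$. A balanced sutured manifold with $\SFH$ of rank one is a product (this is Juh\'asz's characterization, using that $\SFH$ detects the product via taut-ness and the rank-one condition), hence $(M',\gamma')=(R\times I,\partial R\times I)$; reassembling, $M$ is obtained by gluing $R\times\{0\}$ to $R\times\{1\}$, i.e.\ $M$ fibres over $S^1$ with fibre $R\cong S$, and by construction the fibre is dual to $\phi$. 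Combining, the set of fibred classes is precisely the set of $\phi$ whose $\SFH$ fingerprint at the extremal $\spinc$-structure is ``product-like,'' which is an invariant computable from $\SFH(M,\gamma)$ together with its $\spinc$-decomposition and relative grading; this proves the theorem.

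The main obstacle is exactly the step that the paper isolates as its technical heart: in the case $H_2(M)\neq 0$ one cannot freely assume the decomposing surface $S$ is well-behaved. A priori the decomposition realizing the extremal $\spinc$-structure might produce a surface with closed components, compressible pieces, or boundary patterns incompatible with ``nice''-ness, and tautness of the resulting sutured manifold is not automatic. Overcoming this is precisely the content of the main technical result assumed available here; granting it, the rest of the argument is a fairly direct assembly of the decomposition formula, the product-detection theorem, and the (standard, via Proposition~\ref{prop:fibred} and its proof) translation between fibrations and product sutured manifolds. I would also need to be slightly careful about the choice of sutures $\gamma$ and about passing between $H^1(M)$ classes and homology classes of properly embedded surfaces, and about the fact that ``extremal/outermost'' is well-defined only after fixing this data; these are routine but must be stated precisely in Section~\ref{section:fibred}.
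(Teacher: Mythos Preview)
Your overall architecture matches the paper's: use the main technical theorem to produce a nice taut decomposing surface dual to~$\phi$ without closed components, apply the decomposition formula and the product-detection theorem to conclude that the cut-open manifold is a product, and reassemble to a fibration. That skeleton is exactly what Section~\ref{section:fibred} does.

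However, there is a genuine gap in your treatment of the sutures, and it is \emph{not} routine. You propose to fix a single set of sutures~$\gamma$ on~$\partial M$ and then run the argument for every~$\phi$. This fails in both directions. In the forward direction, if~$M$ fibres with fibre~$F$ dual to~$\phi$ and some suture curve~$c_i$ satisfies~$\phi(c_i)=0$, then decomposing~$(M,\gamma)$ along~$F$ does \emph{not} produce the product sutured manifold: the boundary component containing~$c_i$ acquires three parallel sutures, and one gets $\SFH(M',\gamma')\cong(\Z^2)^{s(\phi)}$ rather than~$\Z$ (the paper spells this out just before Proposition~\ref{prop:fibred}). In the converse direction, even after the main technical theorem hands you a nice surface~$S$ and the cut-open manifold has $\SFH\cong\Z$, you still need to know that regluing $S\times\{0\}$ to $S\times\{1\}$ actually reconstructs~$M$; this requires that~$\partial S$ meets every component of~$\gamma$ in essential arcs, which again forces~$\phi(c_i)\neq 0$.

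The paper's resolution is Theorem~\ref{thm:detectfibredphi}: the decoration~$c$ must be chosen \emph{depending on}~$\phi$, with~$\phi(c_i)\neq 0$ for all~$i$. To detect \emph{all} fibred classes simultaneously, the paper then shows (Theorem~\ref{thm:fibredcones}) that a finite collection of~$2^l$ decorations suffices, where~$l$ is the number of boundary tori whose~$H_1$ injects with rank~$2$, and that the fibred cone~$\FF(M)$ is the union of the sets~$\MM(M,c)$ over these decorations. This bookkeeping is a substantive part of the proof, not a detail to be deferred; your last paragraph understates it.
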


In order to state our second main result we first recall that a \emph{sutured manifold  $(M,\gamma)$} is loosely speaking a $3$-manifold $M$, together with a decomposition of its boundary
\[ \partial M=(-R_-)\cup \g\cup R_+,\]
where $\g$ is a collection  of annuli, and $R_-$ and $R_+$ are oriented  subsurfaces of $\partial M$.
In the following we will, for the most part, restrict ourselves to \emph{taut balanced sutured manifolds};
these are sutured manifolds such that $M$ is irreducible, and $R_\pm$ have minimal complexity and
no closed components. We refer to Section~\ref{section:defsutured} for details.

In this paper, we will also study the existence of taut foliations on a sutured manifold. By a \emph{foliation on a sutured manifold $(M,\g)$},
we mean a codimension~1 transversely oriented
foliation on the 3-manifold~$M$ such that all the leaves of~$\mF$ are transverse to~$\ga$ and tangential to $R_-$ and $R_+$.
We say that a foliation $\mF$ is \emph{taut} if  there exists a curve or properly embedded arc in $M$ that is transverse to the leaves of $\mF$ and that intersects every leaf of $\mF$ at least once.

A leaf~$L$ of a foliation~$\mF$ of~$(M,\g)$ is said to be {\it depth~$0$} if it is compact. Recursively, we now define a leaf to be of {\it depth~$k$}
if it is not of depth less than $k$,  and if $\overline L \setminus L$ is a union of leaves of depth less than~$k$. Otherwise, we say that~$L$ is of depth~$\infty$. A foliation~$\mF$ of~$(M,\ga)$ is of depth~$k$ if the maximum of the depths of its leaves is~$k$.
In this paper, we say that a foliation of depth greater than 0 is  \emph{indecomposable}
if the components of $R_\pm$ are the only compact leaves.
We refer to Section~\ref{section:deffoliation} for details.

The third author~\cite{Ju06}
introduced the notion of a relative $\spinc$-structure on a balanced sutured manifold $(M,\g)$, and associated to a $\spinc$-structure $\s$, an abelian group $\SFH(M,\g,\s)$ called the {\it sutured Floer homology} of $(M,\g)$ at $\fs$.
Let $\supp(M,\g)$ denote the set of those $\spinc$-structures on $(M,\ga)$ for which
$\SFH(M,\g,\s) \neq 0$. We write
\[ \SFH(M,\g):= \bigoplus_{\s\in \supp(M,\g)} \SFH(M,\g,\s);\]
this is a relatively $\Z_2$-graded abelian group. Furthermore, the relative grading
restricts to a relative grading on each term $\SFH(M,\g,\s)$.

It follows from work  of the third author~\cite{Ju06,Ju08}, see also Corollary~\ref{cor product},
that an irreducible balanced sutured manifold admits a taut depth~0 foliation
(or equivalently, it is diffeomorphic to a product $(R \times I, \partial R \times I)$ for
a compact oriented surface~$R$) if and only if $\SFH(M,\ga) \cong \Z$.
In particular, this shows that sutured Floer homology detects whether or not an irreducible balanced
sutured manifold admits a taut depth~0 foliation.

Our second main theorem states that sutured Floer homology also detects whether
or not an irreducible balanced sutured manifold $(M,\g)$ admits an indecomposable taut depth one foliation.
In order to state the theorem, we need to introduce a few more definitions.
First, we can associate to an indecomposable taut depth one foliation $\mF$ on~$(M,\ga)$
a cohomology class $\l(\mF)\in H^1(M)$ in a natural way, see Section~\ref{section:deffoliation} for details.
Second, given a cohomology class $\a \in H^1(M)$, we define
\[ \SFH_\a(M,\g):=\bigoplus \SFH(M,\g,\s),\]
where we take the direct sum over all $\spinc$-structures that pair minimally with~$\a$; we refer to the definition in equation (\ref{equ:sfha}) for details.
We can now formulate our second main theorem.

\begin{theorem} \label{mainthm}
Suppose $(M, \g)$ is a connected, irreducible balanced sutured manifold, and let $\a \in H^1(M)$. Then  $\SFH_\a(M,\g) \cong \Z$ if and only if there exists an indecomposable taut depth one foliation $\mF$ with $\l(\mF)=\a$.
\end{theorem}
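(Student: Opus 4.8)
The plan is to prove the two implications separately, in each case passing between indecomposable taut depth one foliations on $(M,\g)$ and ``nice'' taut sutured manifold decompositions of $(M,\g)$, and then combining Juh\'asz's surface decomposition formula~\cite{Ju08} with the product detection of Corollary~\ref{cor product}.

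Suppose first that $\mF$ is an indecomposable taut depth one foliation with $\l(\mF)=\a$. Since the only compact leaves of $\mF$ are the components of $R(\g)$, every depth one leaf spirals into $R(\g)$; the structure theory of such foliations (building on Gabai's work; see Section~\ref{section:deffoliation}) then lets us truncate and cap off a depth one leaf to a properly embedded decomposing surface $S$ that is Poincar\'e dual to $\a$ and for which decomposing $(M,\g)$ along $S$ ``unrolls'' the spiralling, producing a taut balanced sutured manifold $(M',\g')$ that is a product, $(M',\g')\cong(R\times I,\partial R\times I)$. The surface $S$ produced this way is well-groomed, so its set $O_S$ of outer $\spinc$-structures is precisely the set of $\spinc$-structures pairing minimally with $\a$, and Juh\'asz's decomposition formula gives
\[
\SFH_\a(M,\g)\ \cong\ \bigoplus_{\s\in O_S}\SFH(M,\g,\s)\ \cong\ \SFH(M',\g')\ \cong\ \Z,
\]
the last isomorphism because $(M',\g')$ is a product sutured manifold.

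Suppose conversely that $\SFH_\a(M,\g)\cong\Z$. Then exactly one $\spinc$-structure $\s$ pairing minimally with $\a$ has $\SFH(M,\g,\s)\neq 0$, and for it $\SFH(M,\g,\s)\cong\Z$; in particular $\s$ lies in a corner of $\supp(M,\g)$, being extremal for $\a$, and $\chi\big(\SFH(M,\g,\s)\big)=\pm 1\neq 0$. This is exactly the hypothesis of the main technical result of the paper, which therefore supplies a nice and taut decomposing surface $S$, Poincar\'e dual to $\a$, that realizes $\s$, meaning that $O_S$ equals the set of $\spinc$-structures pairing minimally with $\a$. Decomposing $(M,\g)$ along $S$ yields, by tautness, an irreducible balanced sutured manifold $(M',\g')$, and the decomposition formula gives $\SFH(M',\g')\cong\SFH_\a(M,\g)\cong\Z$; hence $(M',\g')$ is a product sutured manifold by Corollary~\ref{cor product}. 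Finally, Gabai's construction of taut foliations from taut sutured hierarchies, applied to the one-step hierarchy that decomposes $(M,\g)$ along $S$ into the product $(M',\g')$, produces---by the niceness of $S$---an indecomposable taut depth one foliation $\mF$ on $(M,\g)$ with $\l(\mF)=\a$.

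The heart of the matter, and the only step where the hypothesis $H_2(M)\neq 0$ causes genuine difficulty, is the appeal to the main technical result in the previous paragraph. When $H_2(M)\neq 0$ the Poincar\'e dual of $\a$ is represented by an affine family of decomposing surfaces, and a careless choice may fail to be ``nice'' (one loses control over $\partial S\cap\g$ and over closed or boundary-trivial components), may fail to leave $(M',\g')$ taut, or may have $O_S$ strictly larger than the extremal face. Dealing with this requires modifying $S$ within its homology class---taking oriented double-curve sums with surfaces carrying classes in $\ker\big(H_2(M,\partial M)\to H^1(M)\big)$, arranging that the result is well-groomed, and using the hypothesis $\chi(\SFH(M,\g,\s))\neq 0$ to certify that the relevant sutured Floer group survives the decomposition so that tautness is not lost---and this is precisely what upgrades the first author's $H_2(M)=0$ theorem to the general case.
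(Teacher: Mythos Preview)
Your proof of Theorem~\ref{mainthm} is correct and follows the same two-step route as the paper: pass between indecomposable taut depth one foliations and product decompositions (Lemmas~\ref{lemma Gabai's truncation} and~\ref{lemma:Al13lemC}), then apply Theorems~\ref{thm nice} and~\ref{thm product}.

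One correction, though, regarding your final paragraph's description of how Theorem~\ref{mainthmtechnical} is proved. The paper does \emph{not} modify~$S$ by taking oriented double-curve sums with surfaces in $\ker\big(H_2(M,\partial M)\to H^1(M)\big)$. Rather, one starts with a well-groomed representative~$S$ of~$\a$ (which exists by Gabai), and the point of Theorem~\ref{mainthmtechnical} is that every component of~$S$ not touching~$R_-$ is automatically null-homologous in $H_2(M,\partial M)$, so may simply be discarded. This is established by an Alexander-polynomial argument: the hypothesis $\chi(\SFH_\a(M,\g))\neq 0$ forces $\Delta^\a_{M,R_-}\neq 0$ (Proposition~\ref{prop:sfhdelta}), hence $H_1(M,R_-;\Q(x))=0$, and a Mayer--Vietoris analysis of the graph-of-spaces decomposition of~$M$ along~$S$ then shows that any such component separates and bounds. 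No double-curve sums or direct tautness-preservation arguments enter.
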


The well-versed reader might be excused for having a sense of d\'ej\`a vu.
In fact, the theorem was proved by the first author~\cite{Al13} for strongly balanced sutured manifolds under the extra assumption
that $H_2(M)=0$.
This assumption was introduced for technical reasons to ensure that in the course of the proof one only works with balanced sutured manifolds.
We will resolve this technical issue using the Alexander polynomial of a sutured manifold
(see~\cite{FJR11} and~\cite{BP01}), and show that one can drop the condition on $H_2(M)$.
We also note that the proof of the `if' direction of this theorem is verbatim the same as in~\cite{Al13}.
We remove the strongly balanced condition on $(M,\g)$ with the aid of Theorem~\ref{thm nice}.
This includes a characterisation of the set of outer $\spinc$-structures for a nice taut decomposing surface
as the ones that pair minimally with~$[S]$. So in the decomposition formula for $\SFH$, we
do not have to refer to relative Chern classes, which are only defined when $(M,\g)$ is strongly balanced.

In Section~\ref{section:sfh}, we introduce the notion of an extremal $\spinc$-structure for a given balanced sutured manifold.
Loosely speaking, a $\spinc$-structure is extremal if it defines a vertex of the convex hull of the support of sutured Floer homology.
With this notion, we then immediately obtain the following corollary to Theorem~\ref{mainthm}.

\begin{corollary} \label{maincor}
A connected, taut balanced sutured manifold $(M,\g)$ admits an indecomposable taut depth one foliation
if and only if there exists an  extremal $\spinc$-structure with $\SFH(M,\g,\s) \cong \Z$.
\end{corollary}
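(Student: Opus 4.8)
The plan is to deduce the corollary from Theorem \ref{mainthm} by translating the condition ``$\SFH_\a(M,\g)\cong\Z$ for some $\a\in H^1(M)$'' into the condition ``some extremal $\spinc$-structure $\s$ has $\SFH(M,\g,\s)\cong\Z$''. First I would recall from Section \ref{section:sfh} that $\SFH_\a(M,\g)$ is the direct sum of $\SFH(M,\g,\s)$ over those $\s$ that pair minimally with $\a$, and that $\s$ is extremal precisely when it is a vertex of the convex hull of $\supp(M,\g)$, equivalently when there exists a class $\a\in H^1(M)$ with respect to which $\s$ is the \emph{unique} minimal $\spinc$-structure in $\supp(M,\g)$. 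This dictionary is the technical heart of the argument, and it is the step I expect to require the most care: one must check that a vertex of a finite subset of an affine lattice is always cut out as the strict minimizer of some linear functional with integral (hence cohomology-class) coefficients, and conversely that a strict minimizer of a linear functional is a vertex of the convex hull. Both directions are standard facts about convex polytopes with lattice vertices, but they need to be stated in the language of $H^1(M)$ and relative $\spinc$-structures used here.

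Granting this dictionary, the forward direction goes as follows. Suppose $(M,\g)$ admits an indecomposable taut depth one foliation $\mF$. By Theorem \ref{mainthm}, with $\a:=\l(\mF)$ we have $\SFH_\a(M,\g)\cong\Z$. Since $\SFH_\a(M,\g)$ is a direct sum of the groups $\SFH(M,\g,\s)$ over the $\spinc$-structures pairing minimally with $\a$, and this direct sum is $\Z$, there is exactly one such $\s$ in $\supp(M,\g)$, and it satisfies $\SFH(M,\g,\s)\cong\Z$. Being the unique minimizer of $\a$ among elements of $\supp(M,\g)$, this $\s$ is extremal by the dictionary above. This produces the required extremal $\spinc$-structure. (Here I would also remark that the tautness hypothesis on $(M,\g)$ is what lets us invoke the ``if and only if'' of Theorem \ref{mainthm} in its clean form; irreducibility is part of tautness.)

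For the converse, suppose $\s$ is extremal with $\SFH(M,\g,\s)\cong\Z$. By the dictionary, choose $\a\in H^1(M)$ such that $\s$ is the unique element of $\supp(M,\g)$ pairing minimally with $\a$. Then $\SFH_\a(M,\g)=\SFH(M,\g,\s)\cong\Z$, so Theorem \ref{mainthm} yields an indecomposable taut depth one foliation $\mF$ with $\l(\mF)=\a$; in particular $(M,\g)$ admits one. The only remaining point is a sanity check that Theorem \ref{mainthm} is genuinely applicable --- $(M,\g)$ is connected and irreducible because it is a connected taut balanced sutured manifold --- which completes the proof. I would close by noting that no new Floer-theoretic input is needed beyond Theorem \ref{mainthm}: the corollary is purely a repackaging via the convex geometry of the support.
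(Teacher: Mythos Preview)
Your proposal is correct and is exactly the argument the paper intends: the authors state that Corollary~\ref{maincor} follows ``immediately'' from Theorem~\ref{mainthm}, and your write-up simply unpacks that word. The one point worth being explicit about (and you flag it) is that the definition of extremal uses $\a\in H^1(M;\R)$ while Theorem~\ref{mainthm} needs $\a\in H^1(M)$; since $\supp(M,\g)$ is finite, the strict inequalities isolating an extremal $\s$ are stable under perturbation, so a rational and hence (after scaling) integral $\a$ exists, exactly as you say.
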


The key new tool in the proof of Theorems \ref{mainthm-fib}
and \ref{mainthm} is Theorem~\ref{mainthmtechnical} that allows us to study fibrations and taut depth one foliations
even in the presence of non-trivial second homology. It states that, given a taut balanced sutured manifold~$(M,\g)$
and a class~$\a \in H^1(M)$ such that $\chi(\SFH_\a(M,\g)) \neq 0$,
if a properly embedded surface~$S$ is dual to~$\a$, then
the surface obtained by removing the closed components of~$S$ is also dual to~$\a$.
In particular, for every such~$\a \in H^1(M)$, there exists a decomposing
surface~$S$ dual to~$\a$ that \emph{has no closed components}, is well-groomed in the sense of Gabai \cite[p.~463]{Ga87},
and gives a taut decomposition. This ensures that the result~$(M',\g')$ of decomposing~$(M,\g)$ along~$S$ is
balanced (most importantly, $R_\pm$ has no closed components),
in which case we shall see in Theorem~\ref{thm nice} that $\SFH(M',\g') \cong \SFH_\a(M,\g)$. The proof of Theorem~\ref{mainthmtechnical}
relies on a careful study of the relationship of the surface~$S$ to an Alexander invariant of the pair~$(M,R_-)$ defined in~\cite{FJR11},
which is the Euler characteristic of sutured Floer homology.

The paper is organized as follows. In Section~\ref{section:sutures}, we recall the definition of a sutured manifold, the definition of a sutured manifold decomposition, and basic properties of sutured Floer homology.
In Section~\ref{section:proofprop}, we prove Theorem~\ref{mainthmtechnical}, which is our main technical result.
In Section~\ref{section:proofmainthm}
we first recall some basic properties and definitions for foliations on sutured manifolds
and we then  give a proof of Theorem~\ref{mainthm}.
Finally, in Section~\ref{section:fibred}, we state and prove our results that imply that sutured Floer homology
detects fibred classes on 3-manifolds with non-empty boundary.

\subsection*{Conventions and notations}
We assume that all $3$-manifolds are compact and oriented, and the foliations we refer to are smooth and transversely oriented.
Given a manifold~$W$ and a submanifold $S \subset W$, we denote by $N(S)$ a tubular neighbourhood of~$S$ in~$W$.
When referring to homology or cohomology groups with integer coefficients, we suppress the coefficient ring~$\Z$ in our notation.

\subsection*{Acknowledgment.} We are very grateful to the referee for many helpful comments.

%================================================
\section{Sutured manifolds}\label{section:sutures}

In this Section we recall the definition of a sutured manifold, the definition of a sutured manifold decomposition, and basic properties of sutured Floer homology.

%================================================
\subsection{Definition of sutured manifolds}
\label{section:defsutured}
A sutured manifold $(M,R_-,R_+,\gamma)$ consists of a $3$-manifold $M$,
 together with a decomposition of its boundary
\[ \partial M=-R_-\cup \g \cup R_+\]
into oriented submanifolds such that
\bn
\item $\g$ is a disjoint union of annuli,
\item $R_-$ and $R_+$ are disjoint,
\item if $A$ is a component of $\g$, then $R_-\cap A$ is connected and is
a boundary component of both~$A$ and~$R_-$, and similarly for $R_+ \cap A$. Furthermore,
\[
[R_+\cap A]=[R_-\cap A] \in H_1(A),
\]
where we endow $R_\pm \cap A$ with the orientation coming from the boundary of~$R_\pm$.
\en
Following the standard convention, we usually write $(M,\gamma)$ instead of
$(M,R_-,R_+,\gamma)$, and sometimes we will denote $R_\pm$ by $R_\pm(\g)$ when it is not clear from
the context which sutured manifold we are referring to.
Note that the notion of sutured manifolds is due to Gabai~ \cite{Ga83}, but our definition is
less general in so far as we do not allow ``toroidal sutures.''

Following Gabai, we say that a sutured manifold $(M,\g)$ is \emph{taut}
if $M$ is irreducible, and if $R_\pm$ are
incompressible and Thurston-norm minimising in $H_2(M,\g)$.
We are mostly interested in the study of taut foliations on sutured manifolds.
By the work of Gabai~\cite{Ga83}, if a sutured manifold carries a taut foliation, then it is taut.
It is therefore reasonable to restrict ourselves henceforth to the study of taut sutured manifolds.

We now recall the following definitions from \cite{Ju06} and \cite{Ju08}, respectively.
\bn
\item The sutured manifold $(M,\gamma)$ is  called \emph{balanced} if  $\chi(R_+) = \chi(R_-)$, and if~$M$ and~$R_\pm$  have no closed components.
\item
The sutured manifold $(M,\gamma)$ is said to be \emph{strongly balanced} if $(M,\g)$ is balanced, and if for every component $F$ of $\partial M$
the equality $\chi(F\cap R_-)=\chi(F\cap R_+)$ holds.
\en

An example of a strongly balanced, taut, sutured manifold is given as follows. Let~$R$ be
a compact oriented surface with no closed components.
Then
\[
(R \times [-1,1], \partial R \times [-1,1])
\]
forms a sutured manifold, which we refer to as a \emph{product sutured manifold}.
Note that $R_+ = R \times \{1\}$ and $R_- = R \times \{-1\}$.
It is clear that this sutured manifold is strongly balanced and taut.

If a sutured manifold $(M,\g)$ carries a codimension one, transversely oriented foliation~$\mF$,
then $\chi(R_+) = \chi(R_-)$, cf.~\cite[Proposition~3.6]{Ju10}. So if $(M,\g)$ carries a foliation,
has at least one suture on each boundary component, and $M$ has no closed components, then
it is balanced.

%================================================
\subsection{Definition of sutured manifold decompositions}

We now define a key operation on a sutured manifold~$(M,\g)$, called a \emph{sutured manifold decomposition}.
Intuitively, this involves cutting the manifold $M$ along a properly embedded oriented surface~$S$, and
adding one side of~$S$ to $R_+$ and the other side to $R_-$. The new $\g$ is where the new $R_+$ and $R_-$ meet.

\begin{definition} \label{def decomp}
Let $(M,\ga)$ be a sutured manifold.
\bn
\item
We say that a properly embedded surface $S \subset M$ is a \emph{decomposing surface} if~$S$ has no closed components, and if given any component $A$ of $\ga$,  each component~$\l$
of~$S \cap A$
is either a properly embedded non-separating arc in $A$, or it is a simple closed curve  in the same homology class as $A\cap R_-$.
\item A decomposing surface $S$ for $(M,\ga)$ defines a {\it sutured manifold decomposition}
\[
(M,R_-,R_+,\ga) \leadsto^S (M',R_-',R_+',\ga'),
\]
where $M' := M \setminus \mbox{int}(N(S))$ and
\begin{align*}
\ga': & = (\ga \cap M') \cup N(S_+' \cap R_-) \cup N(S'_- \cap R_+),\\
R_+': & = ((R_+ \cap M') \cup S_+') \setminus \mbox{int}(\ga'), \\
R_-': & = ((R_- \cap M') \cup S_-') \setminus \mbox{int}(\ga'),
\end{align*}
where $S_+'$ (respectively $S_-'$) are the components of $\bdd N(S) \cap M'$ whose normal vectors points out of (respectively into) $M$.
\en
\end{definition}

If both $(M,\g)$ and $(M',\g')$ are taut, then $(M',\g')$ is simpler than $(M,\g)$ with respect
to some notion of complexity defined by Gabai~\cite{Ga83}.
Furthermore, as shown by Gabai~\cite{Ga83}, a sutured manifold $(M,\g)$ is taut if and
only if there is a sequence of sutured manifold decompositions starting with $(M,\g)$ and
ending with a product sutured manifold.

%================================================
\subsection{Sutured Floer homology}\label{section:sfh}
As we mentioned in the introduction,
the third author~\cite{Ju06} introduced the notion of a relative $\spinc$-structure
on a balanced sutured manifold $(M,\g)$, and showed that the set
$\spinc(M,\g)$ of relative $\spinc$-structures on~$(M,\g)$ admits a canonical
free and transitive $H_1(M)$-action, which makes the set $\spinc(M,\ga)$ into a $H_1(M)$-torsor.

Furthermore, the third author~\cite{Ju06} associated to a $\spinc$-structure~$\s$  an abelian group $\SFH(M,\g,\s)$
called the {\it sutured Floer homology} of $(M,\ga)$ at $\fs$.
It extends the hat version of Heegaard Floer homology of closed 3-manifolds defined by
Ozsv\'ath and Szab\'o~\cite{OS04a,OS04b} to sutured 3-manifolds. The group
\[
\SFH(M,\ga):=\bigoplus \SFH(M,\ga,\fs)
\]
is finitely generated.
It thus follows in particular that the {\it support} of $\SFH(M,\ga)$,
which is defined to be
\[ \supp(M,\g):=\{ \s\in \spin(M,\g)\, \colon\, \SFH(M,\g,\s)\ne 0\},\]
is a finite set.
Furthermore, $\SFH(M,\g)$ carries a relative $\Z_2$-grading, which restricts to a $\Z_2$-grading on each summand $\SFH(M,\g)$.
Given an orientation of the vector space $H_*(M,R_-;\R)$, we obtain a lift of this relative
$\Z_2$-grading to an absolute one.

We pick an identification of $\spinc(M)$ with $H_1(M)$.
We say that $\al \in H^1(M;\R)$
{\it pairs minimally} with a $\spin^c$-structure
$\fs\in \spinc(M,\g)$ if $\al(\fs) \leq \al(\ft)$ for all $ \ft \in \supp(M,\g)$
and $\a(\s) = \a(\t)$ for some $\t \in \supp(M,\g)$.
We then define
\be \label{equ:sfha}
\SFH_\al(M,\ga):=\bigoplus_{\{\fs\in \spin^c(M,\ga) \colon \a \text{ pairs minimally with } \s\}}\SFH(M,\ga,\fs).
\ee
Note that we can and will view $\SFH_\al(M,\g)$ as an abelian group with a relative $\Z_2$-grading. Also note that this definition of $\SFH_\al(M,\ga)$ does not depend on the choice of identification of $\spin^c(M,\ga)$ with $H_1(M)$. Furthermore, this definition of  $\SFH_\al(M,\ga)$ is
clearly equivalent to  \cite[Definition~4.12]{Ju10} when $(M,\g)$ is strongly balanced.

Finally, we say that a $\spinc$-structure $\s\in \supp(M,\g)$ is \emph{extremal} if there exists an $\a\in H^1(M;\R)$ such that $\a$ pairs minimally with $\s$, and such that $\a$ does not pair minimally with any other $\t\in \supp(M,\g)$.

If $H_1(M)$ is torsion-free, then it is straightforward to see that $\s$ is extremal if and only if it is a vertex of the convex hull of $\supp(M,\g)$ viewed as a subset of $H_1(M;\R)=H_1(M)\otimes \R$.
Again, this  does not depend on the choice of the identification of $\spin^c(M,\ga)$ with $H_1(M)$.

%===============================================
\section{The main technical theorem}\label{section:proofprop}

In this section  we state and prove our main technical theorem.
This theorem is needed for the proofs of both of our main results, namely Theorems \ref{mainthm-fib} and \ref{mainthm}.

In order to state our main technical theorem  we need a few more definitions.
Let $(M,\ga)$ be a balanced sutured manifold.
\bn
\item
 We say that a surface $S \subset M$ {\it touches}~$R_-$ if $S \cap R_- \neq \emptyset$.
\item We say a properly embedded surface $S$ in $M$ and a cohomology class $\a \in H^1(M)$
are {\it dual} if~$\a$ is the Poincar\'e dual of $[S] \in H_2(M,\bdd M)$.
\item The group $\SFH(M,\g)$ carries a relative $\Z_2$-grading, which restricts to a grading on each $\SFH(M,\g,\s)$,
and thus also to each $\SFH_\a(M,\g)$. For a $\Z_2$-graded finitely generated abelian group~$G$,
we denote by~$\chi(G)$ the difference of the ranks of the two gradings. Note that~$\chi(G)$ is well-defined up to sign.
\en
 The goal of this section is to prove the following theorem.

\begin{theorem}\label{mainthmtechnical}
Let $(M,\g)$ be a connected  balanced sutured manifold,
and let $\a\in H^1(M)$ be a class such that
\[ \chi(\SFH_\a(M,\g))\ne 0.\]
If $\a$ is dual to a properly embedded surface $S$,
then $\a$ is also dual to the union of the components of $S$ that touch $R_-$.
\end{theorem}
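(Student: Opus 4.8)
The plan is to reduce the statement to a computation with the Euler characteristic of sutured Floer homology, exploiting the fact that $\chi(\SFH(M,\g))$ is (up to sign and units) the Alexander polynomial of the pair $(M,R_-)$ of~\cite{FJR11}, which is a Turaev-type torsion and hence well-behaved under the operations we need. First I would decompose the given surface $S$ as $S = S_0 \sqcup S_1$, where $S_0$ is the union of components touching $R_-$ and $S_1$ is the union of the remaining components; both $S_0$ and $S_1$ are themselves properly embedded surfaces, so $[S] = [S_0] + [S_1]$ in $H_2(M,\partial M)$, and dually $\a = \a_0 + \a_1$ with $\a_i = \mathrm{PD}[S_i]$. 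The goal becomes showing $\a_1 = 0$, i.e.\ that the components of $S$ not touching $R_-$ carry no homology. Since a component $S'$ of $S_1$ is a properly embedded surface disjoint from $R_-$, its boundary lies in $\g \cup R_+$; by the definition of a decomposing surface the boundary curves on an annulus $A \subset \g$ are either non-separating arcs or copies of $A \cap R_-$, and a component disjoint from $R_-$ has a controlled boundary behaviour on $R_+$. I would use this to pin down precisely how $\a_1$ can pair against the relevant classes.

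The key step is to translate ``$\chi(\SFH_\a(M,\g)) \neq 0$'' into a statement about the Newton polytope of the Alexander polynomial. By~\cite{FJR11} we have $\chi(\SFH(M,\g,\s)) = \pm t^{c(\s)}$-coefficient of the Alexander polynomial $\Delta_{(M,R_-)} \in \Z[H_1(M)/\mathrm{torsion}]$ under the chosen identification of $\spinc(M,\g)$ with an $H_1(M)$-torsor; summing over the $\spinc$-structures on which $\a$ is minimal, $\chi(\SFH_\a(M,\g))$ is the sum of the coefficients of $\Delta_{(M,R_-)}$ supported on the face of its Newton polytope where $\a$ is minimised. The hypothesis $\chi(\SFH_\a(M,\g)) \neq 0$ says this face is nonempty and that the coefficients there do not cancel; in particular the Newton polytope of $\Delta_{(M,R_-)}$ has a genuine vertex or positive-dimensional face on the $\a$-minimal side. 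Now I would invoke the relationship between the Newton polytope of the Alexander polynomial and the Thurston norm (the McMullen-type inequality, in the relative/sutured setting of~\cite{FJR11}, \cite{BP01}): the Newton polytope is contained in (a translate of) the dual Thurston-norm ball of $(M,\g)$. Combined with the fact that the class $\a_0 = \a - \a_1$ dual to $S_0$ satisfies $x(\a_0) \le x(\a)$ while $S_1$ being a nonempty homologically essential family would force a strict drop $x(\a_0) < x(\a)$ via a Gabai-sutured-manifold argument (the closed-off or pushed components would give a norm-reducing modification), we get a contradiction with the support of $\Delta$ living on the $\a$-face unless $\a_1 = 0$.

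More concretely, here is the mechanism I expect to carry the argument: given the surface $S$, apply Gabai's techniques to arrange $S$ to be norm-minimising and well-groomed in its homology class; the components not meeting $R_-$, after cutting, contribute only to $R_+$ and can be capped off or isotoped to show $[S_1]$ pairs trivially with the Alexander grading, i.e.\ $\a_1$ evaluates to $0$ on the support of $\SFH$. Since $\chi(\SFH_\a(M,\g)) \neq 0$ certifies that $\supp(M,\g)$ is not contained in a proper affine subspace transverse to $\a_1$ — equivalently $\a_1$ is constant on $\supp(M,\g)$ only if $\a_1 = 0$ after the torsor identification — we conclude $\a_1 = 0$, hence $\a = \a_0$ is dual to $S_0$. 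The main obstacle, I expect, is the last implication: ruling out the possibility that the non-$R_-$-touching components of $S$ are homologically nontrivial but ``invisible'' to $\SFH$ because of cancellation in the Euler characteristic. This is exactly where the hypothesis $\chi(\SFH_\a(M,\g)) \neq 0$ (rather than $\SFH_\a(M,\g) \neq 0$) is essential: it upgrades a homological statement about the support to a non-vanishing statement about a signed count, and I would need the precise torsion-theoretic identity of~\cite{FJR11} relating $\chi(\SFH)$ to $\Delta_{(M,R_-)}$, together with its multiplicativity/duality, to make the cancellation impossible. Handling the book-keeping of how $S_1$ meets $\g$ and $R_+$, and checking that the decomposition along $S_0$ alone remains taut and balanced, are the routine but necessary technical pieces.
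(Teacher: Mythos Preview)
Your proposal correctly identifies the first move --- using \cite{FJR11} to replace the hypothesis $\chi(\SFH_\a(M,\g))\ne 0$ by the nonvanishing of the one-variable Alexander polynomial $\Delta_{M,R_-}^\a$ --- and this is exactly what the paper does. But after that point your argument has a genuine gap.

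Your main mechanism is a Thurston-norm/Newton-polytope inequality: you assert that if the non-$R_-$-touching part $S_1$ were homologically essential, then $x(\a_0)<x(\a)$, contradicting the McMullen-type bound. This step is not justified and is in general false. The whole point of the theorem is to handle the case $H_2(M)\ne 0$, where the components of $S_1$ may well be \emph{closed} incompressible surfaces (e.g.\ tori); removing such components does not lower the Thurston norm, and there is no ``capping off'' available. Nor is $S$ assumed norm-minimising. Your alternative route is also broken: the claim that ``$\chi(\SFH_\a)\ne 0$ certifies that $\supp(M,\g)$ is not contained in a proper affine subspace transverse to $\a_1$'' is simply false --- nothing in the hypothesis prevents the Newton polytope of $\Delta_{M,R_-}$ from being degenerate in the $\a_1$-direction, and $\a_1$ constant on the support does not force $\a_1=0$.

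The paper's argument is quite different and more hands-on. After reducing to $\Delta_{M,R_-}^\a\ne 0$, it cuts $M$ along all components of $S$, builds the dual graph $\Gamma$ (vertices $=$ complementary pieces $W_v$, edges $=$ components $S_e$), and colours a vertex or edge ``green'' if the corresponding piece touches $R_-$ and ``black'' otherwise. The Mayer--Vietoris sequence for $H_*(\,\cdot\,,\,\cdot\cap R_-;\Q(x))$ twisted by $\a$, together with $\Delta_{M,R_-}^\a\ne 0$ (which kills $H_1(M,R_-;\Q(x))$), shows that the map
\[
\bigoplus_{e} H_0(S_e,S_e\cap R_-;\Q(x))\longrightarrow \bigoplus_{v} H_0(W_v,W_v\cap R_-;\Q(x))
\]
is an isomorphism; since each summand is $\Q(x)$ or $0$ according to black/green, a dimension count along any path forces each black edge adjacent to a green vertex to be \emph{separating} in $\Gamma$, hence its surface component is null-homologous in $(M,\partial M)$ and can be discarded. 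Induction on the number of components finishes. This separating-edge argument is the idea you are missing.
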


The proof of Theorem~\ref{mainthmtechnical}
will occupy the remainder of this section, and is the technical heart of this paper. The key idea is to translate the information on the sutured Floer homology into information on a certain one-variable Alexander polynomial of $(M,R_-)$, and to then study the corresponding Alexander module of
$(M,R_-)$ and its relation to the surface $S$.

%===============================================
\subsection{Alexander polynomials of pairs of spaces}
Before we can introduce Alexander polynomials of sutured manifolds, we first introduce Alexander polynomials of pairs of spaces in general.

Let $F$ be a free abelian group, and let $M$ be a finitely generated $\Z[F]$-module. Since~$\Z[F]$ is Noetherian, there exists a finite resolution
\[ \Z[F]^r \xrightarrow{A} \Z[F]^s \to M \to 0.\]
By adding columns of zeros, we can assume that $r\geq s$. The \emph{order of $M$} is now defined as the greatest common divisor of all $s\times s$-minors of $A$. It is well-known that up to multiplication by a unit in $\Z[F]$, that is, up to multiplication by an element of the form $\pm f$ for $f\in F$, this definition does not depend on the choice of the resolution.
We  refer to \cite[Section~I.4.2]{Tu01} for details on orders.

Now let $(X,Y)$ be a pair of topological spaces.
Let $\psi \colon H_1(X) \to F$ be a homomorphism to a free abelian group.
We denote by $p \colon \ti{X} \to X$ the universal  abelian cover of $X$, and we write $\ti{Y}:=p^{-1}(Y)$. Note that
$C_*(\ti{X},\ti{Y};\Z)$ is a $\Z[H_1(X)]$-module. We can thus consider the following chain complex of $\Z[F]$-modules:
\[ C_*^\psi(X,Y;\Z[F]):=C_*(\ti{X},\ti{Y};\Z)\otimes_{\Z[H_1(X)]}\Z[F].\]
We then denote  the corresponding homology modules by
\[ H_*^\psi(X,Y;\Z[F]).\]
(We will drop the superscript $\psi$ if it is understood from the context.)
Furthermore, we  denote by
\[ \Delta_{X,Y}^\psi\in \Z[F]\]
the order of the $\Z[F]$-module $H_*^\psi(X,Y;\Z[F])$.
Note that $\Delta_{X,Y}^\psi$ is well-defined up to multiplication by a unit in $\Z[F]$, that is, up to multiplication by an element of $\Z[F]$ of the form $\pm f$ for $f\in F$.
Several times, we will be interested in homomorphisms of $H_1(X)$ to $F = \Z$. In this case, we identify the group ring of $F$ with $\zx$.

We denote by $\Q(F)$ the quotient field of $\Z[F]$, and define
$H_*^\psi(X,Y;\Q(F))$ to be the homology of the chain complex
\[ C_*^\psi(X,Y;\Q(F)):=C_*(\ti{X},\ti{Y};\Z)\otimes_{\Z[H_1(X)]}\Q(F).\]
The following  elementary lemma is an immediate consequence of the fact that $\Q(F)$
is flat over $\Z[F]$.

\begin{lemma}\label{lem:equiv}
The Alexander polynomial $\Delta_{X,Y}^\psi$ is non-zero
if and only if
\[
H_*^\psi(X,Y;\Q(F))=0.
\]
\end{lemma}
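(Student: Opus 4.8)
The plan is to prove Lemma~\ref{lem:equiv} by a direct application of flatness of $\Q(F)$ over $\Z[F]$ together with the standard translation between vanishing of a finitely generated module and nonvanishing of its order. I will prove both implications essentially simultaneously by tracking a single homology module through base change.

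First I would fix a free resolution of the homology as a $\Z[F]$-module. Since $\Z[F]$ is Noetherian (it is a Laurent polynomial ring over $\Z$), the module $H_*^\psi(X,Y;\Z[F])$ is finitely presented, so there is a presentation $\Z[F]^r \xrightarrow{A} \Z[F]^s \to H_*^\psi(X,Y;\Z[F]) \to 0$ with $r \geq s$, and by definition $\Delta_{X,Y}^\psi = \gcd$ of the $s\times s$ minors of $A$. Because $\Z[F]$ is a UFD (Laurent polynomials over a UFD form a UFD), this $\gcd$ is a well-defined element up to units. The key algebraic fact is then: for a matrix $A$ over a Noetherian UFD $R$ with fraction field $Q$, the $\gcd$ of the maximal ($s\times s$) minors is nonzero if and only if $A$ has rank $s$ over $Q$, i.e.\ if and only if the cokernel $Q^s/A(Q^r)$ vanishes, i.e.\ $\op{coker}(A)\otimes_R Q = 0$.

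Next I would identify $H_*^\psi(X,Y;\Q(F))$ with $H_*^\psi(X,Y;\Z[F]) \otimes_{\Z[F]} \Q(F)$. Since $\Q(F)$ is a localisation of $\Z[F]$ it is flat, so $- \otimes_{\Z[F]} \Q(F)$ is exact and commutes with homology: applying it to the chain complex $C_*^\psi(X,Y;\Z[F])$ of free $\Z[F]$-modules gives exactly $C_*^\psi(X,Y;\Q(F))$, and exactness then yields $H_*^\psi(X,Y;\Q(F)) \cong H_*^\psi(X,Y;\Z[F]) \otimes_{\Z[F]}\Q(F)$. Applying flatness (right-exactness suffices here) to the presentation sequence of $H_*^\psi(X,Y;\Z[F])$ gives $\Q(F)^r \xrightarrow{A} \Q(F)^s \to H_*^\psi(X,Y;\Z[F])\otimes_{\Z[F]}\Q(F) \to 0$, so this tensored module is the cokernel of $A$ over $\Q(F)$. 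Combining the two chains of isomorphisms: $H_*^\psi(X,Y;\Q(F)) = 0$ if and only if $\op{coker}(A)\otimes_{\Z[F]}\Q(F)=0$ if and only if $\Delta_{X,Y}^\psi \neq 0$, which is the claim.

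The main obstacle is the purely commutative-algebra step relating the $\gcd$ of maximal minors to the rank of the matrix over the fraction field; this is where the UFD hypothesis on $\Z[F]$ is genuinely used. Concretely, if $\Delta = \gcd$ of the $s\times s$ minors is nonzero then at least one $s\times s$ minor is nonzero, so $A$ has rank $s$ over $\Q(F)$ and its cokernel vanishes; conversely if $\Delta = 0$ then every $s\times s$ minor vanishes, the rank of $A$ over $\Q(F)$ is less than $s$, and so $\op{coker}(A)\otimes\Q(F)$ is a nonzero $\Q(F)$-vector space. The only subtlety — harmless here — is that $H_*^\psi$ ranges over several degrees; but the order of a graded module is the product of the orders of its pieces, so the order is nonzero if and only if each graded piece has nonzero order, if and only if each $H_i^\psi(X,Y;\Q(F))$ vanishes, matching the displayed statement. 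I would cite \cite[Section~I.4.2]{Tu01} for the minor/order formalism and flatness of localisation for the homology base change, keeping the written proof to a few lines.
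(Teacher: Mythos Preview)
Your proof is correct and follows exactly the approach the paper intends: the paper states only that the lemma ``is an immediate consequence of the fact that $\Q(F)$ is flat over $\Z[F]$,'' and you have spelled out precisely that consequence (base change for homology via flatness, plus the standard fact that the order ideal is nonzero iff the presentation matrix has full rank over the fraction field). Your treatment of the graded subtlety via multiplicativity of orders under direct sums is also correct and worth keeping.
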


We also have  the following basic lemma.

\begin{lemma}\label{lem:delta0}\label{lem:h0}
Let $(X,Y)$ be a pair of compact topological spaces with $X$ connected.
Let $\psi\colon  H_1(X) \to F$ be a homomorphism to a free abelian group.
Then
\[ H_0^\psi(X,Y;\Q(F)) \cong \left\{ \ba{ll}
\Q(F)&\text{if } Y=\emptyset \text{ and $\psi$ is trivial,}\\
0& \text{otherwise.}
\ea \right.\]
\end{lemma}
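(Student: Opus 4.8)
\textbf{Proof plan for Lemma~\ref{lem:h0}.}

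The plan is to compute $H_0^\psi(X,Y;\Q(F))$ directly from the long exact sequence of the pair together with the standard computation of $H_0$ of a space with coefficients twisted by $\psi$. First I would handle the absolute case $Y=\emptyset$. Here $H_0^\psi(X;\Q(F)) = H_0(\ti X;\Z)\otimes_{\Z[H_1(X)]}\Q(F)$, where $\ti X$ denotes the cover corresponding to $\psi$ (equivalently, the universal abelian cover pushed forward along $\psi$). Since $X$ is connected, $\ti X$ is connected, so $H_0(\ti X;\Z)\cong\Z$ as an abelian group, with $H_1(X)$ acting through $\psi$ and then through the augmentation $\Z[F]\to\Z$; in other words $H_0^\psi(X;\Q(F)) \cong \Z\otimes_{\Z[H_1(X)]}\Q(F) = \Q(F)/(f-1 : f\in\im\psi)$. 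If $\psi$ is trivial this is just $\Q(F)$ (indeed $F$ must then be trivial, or one allows $F$ nontrivial but the image is trivial — in either case the ideal is zero and we get $\Q(F)$). If $\psi$ is nontrivial, pick $f\in\im\psi$ with $f\neq 1$; then $f-1$ is a nonzero element of $\Z[F]\subset\Q(F)$, hence a unit in the field $\Q(F)$, so the quotient vanishes. This gives the absolute case.

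Next I would treat the relative case $Y\neq\emptyset$. Using the long exact sequence of the pair $(\ti X,\ti Y)$ with $\Q(F)$-coefficients,
\[ H_0^\psi(Y;\Q(F)) \to H_0^\psi(X;\Q(F)) \to H_0^\psi(X,Y;\Q(F)) \to 0, \]
it suffices to show the first map is surjective. If $\psi$ is nontrivial this is automatic since the middle term already vanishes by the absolute case. If $\psi$ is trivial, then $H_0^\psi(X;\Q(F))\cong\Q(F)$ is generated by the class of a single point $x_0$, and since $Y\neq\emptyset$ we may choose $x_0\in Y$; then $[x_0]$ lifts to $H_0^\psi(Y;\Q(F))$ and maps onto the generator, so again the connecting map to $H_0^\psi(X,Y;\Q(F))$ is zero. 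In both subcases $H_0^\psi(X,Y;\Q(F))=0$, completing the argument.

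I do not expect a serious obstacle here; the lemma is genuinely elementary, as the authors indicate. The only point requiring mild care is the bookkeeping of the coefficient system: one must be consistent that $C_*^\psi$ is formed by tensoring the chains of the \emph{$H_1(X)$}-cover down along $\Z[H_1(X)]\to\Z[F]$, so that $H_0$ of the twisted chain complex of a connected space is the coinvariants $\Z\otimes_{\Z[H_1(X)]}\Q(F)$ rather than something more complicated — and then the key elementary fact is simply that $f-1$ is invertible in the field $\Q(F)$ whenever $f\neq 1$. Flatness of $\Q(F)$ over $\Z[F]$ (already invoked for Lemma~\ref{lem:equiv}) guarantees that passing to $\Q(F)$-coefficients commutes with taking homology, so the long exact sequence above is legitimate.
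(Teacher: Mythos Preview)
Your proof is correct and follows essentially the same approach as the paper's: compute the absolute $H_0$ as coinvariants $\Q(F)/(f-1:f\in\im\psi)$ and use that $f-1$ is a unit in the field $\Q(F)$ when $f\neq 1$, then handle the relative case via the long exact sequence of the pair by showing $H_0^\psi(Y;\Q(F))\to H_0^\psi(X;\Q(F))$ is surjective. The only cosmetic difference is that the paper establishes this surjectivity uniformly by comparing the coinvariant quotients component-by-component (using $\im\phi_i\subset\im\psi$), whereas you split into the two cases $\psi$ trivial/nontrivial; both are equally short and valid.
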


\begin{proof}
We first consider the case when $Y$ is the empty set.
Since $X$ is connected, it  follows from the standard expression for the 0-th twisted homology (see  \cite[Section~VI]{HS97}) that we have  an isomorphism
\[ \xymatrix{ H_0^\psi(X;\Q(F))\ar[r]^-\cong &\Q(F)/\{ v-\psi(g)v\,\colon\, v\in \Q(F)\mbox{ and }g\in H_1(X)\}.}\]
Since $\Q(F)$ is a one-dimensional vector space, it follows that $H_0^\psi(X;\Q(F))=0$ if
we quotient out by a non-zero subspace. But this  in turn happens if and only if  $\psi \colon  H_1(X)\to F$ is non-trivial.

Now suppose that $Y$ is non-empty. We denote the components of $Y$ by $Y_i$ for $i \in I$.
Let $\phi_i \colon H_1(Y_i; \Z) \to \F$ be the composition of the inclusion map $H_1(Y_i) \to H_1(X)$ with~$\psi$,
and we define $\phi \colon H_1(Y; \Z) \to F$ analogously.
We then get  a commutative diagram
\[ \xymatrix{ H_0^\phi(Y;\Q(F))\ar[d]\ar[r]^-\cong &\bigoplus\limits_{i\in I}\Q(F)/\{ v-\phi_i(g)v\, \colon \, v\in \Q(F)\mbox{ and }g\in H_1(Y_i)\}\ar[d]\\
H_0^\psi(X;\Q(F))\ar[r]^-\cong &\Q(F)/\{ v-\psi(g)v\, \colon \, v\in \Q(F)\mbox{ and }g\in H_1(X)\},}\]
where the left vertical map is  the map induced by the inclusion, and the right vertical map is the sum of the canonical maps. Since each map
$\phi_i \colon  H_1(Y_i) \to F$ factors through $\psi \colon  H_1(X)\to F$,
it follows that for each $i \in I$ we have
\[ \im(\phi_i \colon  H_1(Y_i) \to F) \subset \im(\psi \colon  H_1(X) \to F),\]
which in turn implies that the vertical map  on the right in the above commutative diagram is an epimorphism for each summand.
Since we have at least one summand, the vertical map on the right is an epimorphism.
Now it follows  from the short exact sequence
\[ H_0^\phi(Y; \Q(F)) \to H_0^\psi(X; \Q(F)) \to H_0^\psi(X,Y; \Q(F)) \to 0\]
that $H_0^\psi(X,Y;\Q(F))=0$.
\end{proof}

%===============================================
\subsection{The Alexander polynomial  of a sutured manifold}

Given a sutured manifold $(M,\g)$ and a homomorphism $\psi \colon H_1(M) \to F$, where
$F$ is a free abelian group, we refer to $\Delta_{M,R_-}^\psi$ as \emph{the Alexander polynomial of the triple $(M,\g,\psi)$}. We will now relate the Alexander polynomials of $(M,\g)$ to the sutured Floer homology of $(M,R_-)$.
In order to do this, we need one more piece of notation: if $H$ is an abelian group, then given
$p$, $q\in \Z[H]$, we write $p \doteq q$ if $p$ and $q$ are equal  up to multiplication by an
element of $\Z[H]$ of the form $\pm h$ for $h \in H$.

Recall that,  given a  balanced sutured manifold $(M,\g)$ and an orientation of the vector space $H_*(M,R_-; \R)$,
the group $\SFH(M,\g)$ is an absolutely $\Z_2$-graded, finitely generated abelian group.
Since this $\Z_2$-grading restricts to a $\Z_2$-grading on each summand $\SFH(M,\g,\s)$,
it thus makes sense to consider the Euler characteristic $\chi(\SFH(M,\g,\s)) \in \Z$ for any $\spinc$-structure
$\s \in \spinc(M,\g)$.
Using the results and techniques of~\cite{FJR11}, we will now prove the following proposition.

\begin{proposition}\label{prop:sfhdelta}
Let $(M,\g)$ be a connected balanced sutured manifold. Pick an orientation of $H_*(M,R_-; \R)$
and an identification of $\spinc(M,\g)$ with $H = H_1(M)$. Let $\psi \colon H \to F$ be a
homomorphism to a free abelian group. Then we have
\[
\Delta_{M,R_-}^\psi \doteq \sum_{\s \in \spinc(M,\g)} \chi(\SFH(M,\g,\s)) \cdot \psi(\s)\in \Z[F].
\]
\end{proposition}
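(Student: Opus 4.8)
The plan is to identify both sides of the claimed equality with (a version of) the Turaev torsion of the pair $(M,R_-)$ twisted by $\psi$, and thereby reduce the statement to known results from \cite{FJR11}. First I would recall from \cite{FJR11} that the Euler characteristic $\chi(\SFH(M,\g))$, refined over $\spinc(M,\g)$, is computed by a Turaev-type torsion $\tau(M,R_-)\in\Z[H]$ (up to the usual $\doteq$ ambiguity and up to the chosen homology orientation, which is exactly what pins down the sign and the absolute $\Z_2$-grading): concretely,
\[
\sum_{\s\in\spinc(M,\g)}\chi(\SFH(M,\g,\s))\cdot\s \;\doteq\; \tau(M,R_-)\in\Z[H].
\]
Applying the ring homomorphism $\Z[H]\to\Z[F]$ induced by $\psi$ then turns the right-hand side of the Proposition into $\psi_*\tau(M,R_-)$, so it suffices to show $\Delta_{M,R_-}^\psi\doteq\psi_*\tau(M,R_-)$ in $\Z[F]$.

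For that last identity I would invoke the standard relationship between Reidemeister--Turaev torsion and Alexander polynomials (Milnor, Turaev; see \cite[Section~I.4]{Tu01}): for a finite CW-pair $(X,Y)$ with a homomorphism $\psi\colon H_1(X)\to F$, the torsion $\psi_*\tau(X,Y)$ is, up to a unit in $\Z[F]$ and up to the factors $\prod_i\Delta^\psi_{*,i}$ coming from the various homology degrees, the alternating product of the orders $\Delta_{H_i^\psi(X,Y;\Z[F])}$. I would split into two cases according to whether $\Delta_{M,R_-}^\psi$ vanishes. If $\Delta_{M,R_-}^\psi=0$, then by Lemma~\ref{lem:equiv} the homology $H_*^\psi(M,R_-;\Q(F))$ is nonzero; since $H_0$ vanishes by Lemma~\ref{lem:delta0} (note $R_-\neq\emptyset$ because $(M,\g)$ is balanced), the torsion over $\Q(F)$ is not defined in the usual sense, and one checks correspondingly that $\psi_*\tau(M,R_-)=0$ in $\Q(F)$, hence $\doteq 0$; so both sides agree. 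If $\Delta_{M,R_-}^\psi\neq 0$, then $H_*^\psi(M,R_-;\Q(F))=0$, the torsion is defined over $\Q(F)$, and Milnor's theorem expresses it as $\prod_i\big(\Delta_{H_i^\psi(M,R_-)}\big)^{(-1)^{i+1}}$; the degree-$0$ term is trivial (Lemma~\ref{lem:delta0} again), and here I would either quote the computation in \cite{FJR11} that the higher terms other than degree $1$ contribute a unit, or argue directly that for the pair $(M,R_-)$ with $\partial M\neq\emptyset$ one has $H_{\ge 2}^\psi(M,R_-;\Q(F))=0$ after a suitable handle decomposition, so that only $\Delta_{H_1^\psi}=\Delta_{M,R_-}^\psi$ survives. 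Either way one gets $\psi_*\tau(M,R_-)\doteq\Delta_{M,R_-}^\psi$.

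The main obstacle I anticipate is the bookkeeping of signs and the absolute $\Z_2$-grading: the statement is an equality of \emph{signed} integer combinations (the $\doteq$ only absorbs multiplication by $\pm h$, $h\in H$), and $\chi(\SFH(M,\g,\s))$ is only well-defined after the choice of orientation of $H_*(M,R_-;\R)$ that also enters the refined torsion $\tau(M,R_-)$. So the real content is to verify that the orientation/Euler-structure conventions used to fix the absolute grading on $\SFH$ in \cite{Ju06} match those used to fix the sign-refined torsion in \cite{FJR11}, so that the two sign-refined invariants literally coincide rather than merely up to a global $\pm$. I expect this to be handled by citing the precise normalization theorem of \cite{FJR11} relating $\chi(\SFH)$ and torsion, and then the Proposition follows by pushing that identity forward along $\psi$ together with the torsion--Alexander-polynomial dictionary. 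A secondary, more routine point is checking that the universal abelian cover and the chain complex $C_*^\psi(M,R_-;\Z[F])$ are the right objects — i.e.\ that $R_-$ is connected-component-wise well-behaved and that the CW-pair is finite — which is immediate since $M$ is compact and $R_-$ is a compact subsurface of $\partial M$.
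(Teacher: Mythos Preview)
Your overall strategy matches the paper's: both start from the identification $\tau(M,R_-)\doteq\sum_\s\chi(\SFH(M,\g,\s))\cdot\s$ in $\Z[H]$ from \cite{FJR11}, push forward along $\psi$, and then relate $\psi_*\tau$ to $\Delta_{M,R_-}^\psi$. The divergence is in this last step. The paper chooses a CW pair $(X,Y)$ simple-homotopy-equivalent to $(M,R_-)$ with $X$ a $2$-complex whose $0$-cells all lie in $Y$; since $(M,\g)$ is balanced one has $\chi(M,R_-)=0$, so the relative chain complex is concentrated in degrees $1$ and $2$ with the same number $r$ of cells in each, and the boundary map is a single square matrix $A$ over $\Z[H]$. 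Then $\tau(X,Y)\doteq\det(A)$ by \cite[Lemma~3.6]{FJR11}, while the resolution $\Z[F]^r\xrightarrow{\psi(A)}\Z[F]^r\to H_1^\psi\to 0$ gives $\Delta_{M,R_-}^\psi\doteq\det(\psi(A))=\psi(\det(A))$. This is precisely your option ``argue directly after a suitable handle decomposition'', but carried out at the chain level rather than the homology level; the payoff is that both the functoriality $\psi_*\tau=\tau^\psi$ and the vanishing case $\Delta^\psi=0$ become automatic, since both sides are literally $\det(\psi(A))$. Your alternative via Milnor's alternating-product formula is also valid, but then you must check that the orders of $H_0^\psi$ and $H_2^\psi$ over $\Z[F]$ (not merely over $\Q(F)$, which is all Lemma~\ref{lem:delta0} gives) are units, and your handling of the case $\Delta^\psi=0$ (``torsion over $\Q(F)$ not defined, hence $\psi_*\tau=0$'') tacitly invokes the convention that the torsion of a non-acyclic complex is zero together with functoriality under coefficient change --- correct, but worth stating.

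One minor point: your concern about matching sign and grading conventions is largely moot here. The proposition is only asserted up to $\doteq$, i.e., up to multiplication by $\pm f$ with $f\in F$, so the orientation of $H_*(M,R_-;\R)$ serves only to make each individual $\chi(\SFH(M,\g,\s))$ a well-defined integer, not to fix a global sign in the final identity.
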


\begin{proof}
First of all, given a finitely generated abelian group $H$, we denote by $\Q(H)$ the
ring given by inverting all elements in $\Z[H]$ that are not zero divisors.
Given  a pair of finite CW-complexes $(X,Y)$, we then denote by
$\tau(X,Y) \in \Q(H_1(X))$ Turaev's maximal abelian torsion of $(X,Y)$.
We refer to \cite[Section~3.4]{FJR11} and \cite{Tu01,Tu02} for details.

We equip the pair $(M,R_-)$ with a CW-structure $(X',Y')$, and identify $H_1(X')$ with~$H$.
By~\cite[Theorem~1.1]{FJR11}, we know  that $\tau(X',Y')$ lies  in $\Z[H] \subset \Q(H)$,
and that in fact
\[ \tau(X',Y') \doteq \sum_{\s\in \spinc(M,\g)} \chi(\SFH(M,\g,\s)) \cdot \s \in \Z[H].\]
Since $(M,\g)$ is balanced, $R_-\ne \emptyset$.
It now follows from standard arguments that the pair of CW-complexes $(X',Y')$ is simple homotopy equivalent to a pair of CW-complexes $(X,Y)$, where $X$ is a 2-complex  such that all
0-simplices of $X$ lie in $Y$.
Since Reidemeister torsion is invariant under simple homotopy equivalence, it follows that
\[ \tau(X,Y) \doteq \tau(X',Y') \in \Z[H],\]
where we also identify $H_1(X)$ with $H$.

Since $(M,\g)$ is a balanced sutured manifold,
it follows from a standard Poincar\'e duality argument that $\chi(M) = \chi(R_-)$.
Hence
\[
\chi(X,Y) = \chi(M,R_-) = \chi(M)-\chi(R_-)=0.
\]
We denote by $r$ the number of $1$-cells in $X\sm Y$.
Since $X$ is a  2-complex, and since all 0-cells of $X$ lie in $Y$, it follows from
$\chi(X,Y)=0$ that $r$ also equals  the number of 2-cells in $X \sm Y$.

Let $p\colon \ti{X}\to X$ be the universal
abelian cover of $X$, and we write $\ti{Y}:=p^{-1}(Y)$.
By picking lifts of the cells in $X\sm Y$ to $\ti{X}$, we can identify
$C_1(\ti{X},\ti{Y})$ with $\Z[H]^r$ and
$C_2(\ti{X},\ti{Y})$ with $\Z[H]^r$. We denote by $A$ the matrix over $\Z[H]$ representing the boundary map
$C_2(\ti{X},\ti{Y}) \to C_1(\ti{X},\ti{Y})$ with respect to the chosen bases. It then follows from \cite[Lemma~3.6]{FJR11} that
\[ \tau(X,Y) \doteq \det(A)\in \Z[H].\]

We also denote by $\psi$ the extension of $\psi \colon H \to F$
to a ring homomorphism $\Z[H]\to \Z[F]$.
Since the Alexander polynomial of a pair only depends on the homotopy type of the pair, we see that $\Delta_{M,R_-}^\psi=\Delta_{X,Y}^\psi$.
Furthermore, note that
\[  C_2(\ti{X},\ti{Y})\otimes_{\Z[H]}\Z[F]\xrightarrow{\partial_2} C_1(\ti{X},\ti{Y})\otimes_{\Z[H]}\Z[F]\to H_1^\psi(X,Y;\Z[F])\to 0\]
is a free resolution for $H_1^\psi(X,Y;\Z[F])$. This resolution is isomorphic to the resolution
\[ \Z[F]^r\xrightarrow{\psi(A)}\Z[F]^r\to H_1^\psi(X,Y;\Z[F])\to 0.\]
It  follows from the definition of the order of a $\Z[F]$-module that $\Delta_{X,Y}^\psi\doteq \det(\psi(A))$,
where $\psi(A)$ is the matrix given by applying $\psi\colon \Z[H]\to \Z[F]$ to all entries of $A$.
Combining the above equalities we then see that
\[  \ba{rcl}\Delta_{M,R_-}^\psi\doteq \Delta_{X,Y}^\psi \doteq \det(\psi(A))&=&\psi(\det(A))\\
&\doteq &\psi(\tau(X,Y))\\&\doteq&\psi(\tau(X',Y'))\\
&\doteq &\psi\left(\sum_{\s\in \spinc(M,\g)}\chi(\SFH(M,\g,\s))\cdot \s\right)\\
&\doteq &\sum_{\s\in \spinc(M,\g)}\chi(\SFH(M,\g,\s))\cdot \psi(\s).\ea \]
\end{proof}

%===============================================
\subsection{Proof of Theorem \ref{mainthmtechnical}}
In this  section, we prove  the following proposition.

\begin{proposition}\label{mainprop}
Let $(M,\g)$ be a connected balanced sutured manifold, let
\[
\a \in H^1(M) \cong \hom(H_1(M),\Z),
\]
and  suppose that $S$ is a properly embedded surface in $M$ that is Poincar\'e dual to~$\a$.
If $\De^\a_{M,R_-} \neq 0$, then the class $\a$ is also dual to the union of the components of $S$ that touch $R_-$.
\end{proposition}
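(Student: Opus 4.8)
The strategy is to reduce the statement to a homological fact about the one‑variable Alexander module $H_1^\a(M,R_-;\Q[t^{\pm1}])$ and then to extract geometric information about $S$ from the hypothesis $\Delta^\a_{M,R_-}\neq 0$, which by Lemma~\ref{lem:equiv} is equivalent to $H_*^\a(M,R_-;\Q(t))=0$. Write $S=S_0\cup S_1$, where $S_0$ is the union of the components of $S$ that touch $R_-$ and $S_1$ the union of those that do not. Since $[S]=[S_0]+[S_1]$ in $H_2(M,\partial M)$, the claim ``$\a$ is dual to $S_0$'' is equivalent to the claim that the class $[S_1]$ is Poincar\'e dual to $0$, i.e.\ $[S_1]=0$ in $H_2(M,\partial M)$; equivalently, writing $\a_i$ for the class dual to $S_i$, we must show $\a_1=0$ in $H^1(M)=\hom(H_1(M),\Z)$.

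First I would set up the covering‑space picture. Let $p\colon\widetilde M\to M$ be the infinite cyclic cover associated to $\a\colon H_1(M)\to\Z$, with deck group generated by $t$, and let $\widetilde R_-=p^{-1}(R_-)$. The surface $S_1$ is disjoint from a collar of $R_-$, so $S_1$ lies in the interior of $M\setminus N(R_-)$; cutting $M$ along $S$ (or along $S_1$ alone) produces the infinite cyclic cover explicitly as an infinite union of copies of the cut‑open manifold glued along copies of $S$. In that picture, the relevant piece of $S$ for detecting $\a_1$ is precisely $S_1$, because the components touching $R_-$ can be absorbed into $R_-$ (they contribute to the boundary relative to which we are taking homology). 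Concretely, I expect to produce a properly embedded surface — a union of some components of $S$ — representing a nonzero class in $H_2(M\setminus N(R_-),\partial)$ on which $\a_1$ evaluates nontrivially, unless $S_1$ is rationally nullhomologous.

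Next, suppose for contradiction that $\a_1\neq 0$. Then $S_1$ carries a nonzero element of $H_2(M,\partial M)$; dually, there is a class $\gamma\in H_1(M)$ with $\a(\gamma)=\a_1(\gamma)=\gamma\cdot[S_1]\neq 0$ while, because $S_1$ misses $R_-$ and $S_0$ ``uses up'' the rest of $\a$, one can arrange a loop detecting this intersection that lies in $M\setminus R_-$, or more robustly, one finds that the $\Z[t^{\pm1}]$‑module $H_1^\a(M,R_-;\Z[t^{\pm1}])$ fails to be $\Q(t)$‑torsion. The cleanest route is: intersection with $S_1$ gives a nonzero map to $\Z$, which exhibits an infinite‑cyclic‑cover class that survives after $\otimes\,\Q(t)$, contradicting $H_*^\a(M,R_-;\Q(t))=0$. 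Said differently, I would show directly that if $H_1^\a(M,R_-;\Q(t))=0$ then the intersection pairing with any component of $S$ not meeting $R_-$ must vanish, because a nonzero intersection number would produce a nonbounding $t$‑chain in the cover. The Mayer–Vietoris / long exact sequence of the cut decomposition along $S_1$ is the natural mechanism: the ``translation'' map $t-1$ on the relevant homology of the cut‑open piece has cokernel detected by intersection with $S_1$, and torsion‑ness over $\Q(t)$ forces this cokernel to vanish.

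**Expected main obstacle.**
The technical heart is bookkeeping: making precise the claim that ``the components of $S$ meeting $R_-$ can be absorbed into $R_-$'' at the level of the cut‑open manifold and the infinite cyclic cover, so that the residual obstruction is genuinely governed by $S_1$ alone and not polluted by how $S_0$ is attached to the boundary. One has to be careful that $S$ may meet the sutures $\gamma$ and that cutting along $S$ changes $R_\pm$; I would handle this by working relative to $R_-$ throughout and using that $S_0\subset \overline{M\setminus N(R_-)}$ only up to isotopy, together with excision to identify the relative homology of the cut manifold. The other delicate point is passing between ``$\Delta^\a_{M,R_-}\neq0$'', the vanishing of $H_*^\a(M,R_-;\Q(t))$ (Lemma~\ref{lem:equiv}), and the geometric nonvanishing of an intersection number — each implication is short, but stringing them through the cut‑open description without sign or orientation errors is where the real work lies.
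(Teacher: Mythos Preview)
Your reduction to the vanishing of $H_*^\a(M,R_-;\Q(t))$ via Lemma~\ref{lem:equiv} is exactly right, and the instinct to use a Mayer--Vietoris sequence for a cut decomposition is also correct. But the specific mechanism you propose does not go through. First, the assertion that one can find $\gamma\in H_1(M)$ with $\a(\gamma)=\a_1(\gamma)\neq 0$ is the statement that $\a_0(\gamma)=0$ while $\a_1(\gamma)\neq 0$; nothing rules out that $\a_1$ is a rational multiple of $\a_0$, in which case no such $\gamma$ exists. Second, and more seriously, even granting such a loop, there is no direct route from ``$\gamma\cdot S_1\neq 0$'' to a non-torsion element of $H_1^\a(M,R_-;\Z[t^{\pm1}])$: the intersection number lives in ordinary $H_1(M)$, and the passage to the $\a$-cover does not convert it into a free $\Q(t)$-summand in any evident way. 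Third, cutting only along $S_1$ is the wrong cut for analyzing the $\a$-twisted homology: since $\a$ is dual to all of $S$, the restriction of $\a$ to the pieces of $M\setminus S_1$ is governed by $S_0$ and is typically nonzero, so the Wang/MV sequence you envisage does not reduce to something computable.

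The paper's proof is organized quite differently. One cuts $M$ along \emph{all} of $S$, so that $\a$ restricts trivially to every piece $W_v$ and every $S_e$; then Lemma~\ref{lem:h0} computes $H_0(-,\,-\cap R_-;\Q(x))$ of each piece as $\Q(x)$ if it misses $R_-$ (``black'') and $0$ if it touches $R_-$ (``green''). The Mayer--Vietoris sequence together with $H_0(M,R_-;\Q(x))=H_1(M,R_-;\Q(x))=0$ yields an isomorphism
\[
\bigoplus_{e\text{ black}}\Q(x)\ \xrightarrow{\ \cong\ }\ \bigoplus_{v\text{ black}}\Q(x).
\]
The key step---which has no analogue in your outline---is a graph-theoretic path argument: encode the pieces and components of $S$ as a graph $\Gamma$, and show that any path whose interior is black and which starts at a green vertex must end at a black vertex (the injectivity of the MV map restricted to the path forces a dimension inequality). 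This implies that some black edge $f$ is \emph{separating} in $\Gamma$, hence $S_f$ is nullhomologous in $H_2(M,\partial M)$. One then removes $S_f$ and inducts on the number of components of $S$; one never proves $[S_1]=0$ in a single stroke. The missing idea in your plan is precisely this combinatorial separating-edge argument; the ``bookkeeping'' you anticipate is not the obstacle---the structure of the proof is.
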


Before we give the proof of  Proposition \ref{mainprop}, we will first
show that Theorem \ref{mainthmtechnical} follows from Proposition \ref{mainprop}.

\begin{proof}[Proof of Theorem \ref{mainthmtechnical}]
Let $(M,\g)$ be a connected  balanced sutured manifold, and suppose that
 $\a\in H^1(M)$ is a class such that
\[ \chi(\SFH_\a(M,\g))\ne 0.\]
Furthermore, let $S$ be a properly embedded surface dual to $\a$.
We have to show that~$\a$ is also dual to the union of the components of $S$ that touch $R_-$.

First, we pick an identification of $\spinc(M,\g)$ with $H_1(M)$. By Proposition~\ref{prop:sfhdelta},
\[
\Delta_{M,R_-}^\a\doteq \sum_{\s\in \spinc(M,\g)}\chi(\SFH(M,\g,\s))\cdot x^{\a(\s)}\in\zx.
\]
We write
\[
d:=\min \{ \a(\s)\,\colon\, \s\in \supp(M,\g)\}.
\]
It follows from the definition of $\SFH_\a(M,\g)$
that $\Delta_{M,R_-}^\a$  is of the form
\[
\Delta_{M,R_-}^\a = \chi(\SFH_\a(M,\g))\cdot x^{d}+ \mbox{higher order terms}.
\]
Hence, the assumption $ \chi(\SFH_\a(M,\g))\ne 0$ implies that $\Delta_{M,R_-}^\a$ is non-zero.
The result now follows from Proposition~\ref{mainprop}.
\end{proof}

We  now turn to the proof of Proposition~\ref{mainprop}.

\begin{proof}[Proof of Proposition~\ref{mainprop}]

We denote the components of $S$ by $S_e$ for $e\in E$.
We pick disjoint open tubular neighbourhoods $S_e \times [-1,1]$ for $e \in E$
such that
\begin{itemize}
\item $S_e\times \{0\} = S_e$,
\item  $S_e\times [-1,1]$ has the same orientation as $M$, and
\item $(S_e\times [-1,1]) \cap \partial M = \partial S_e \times [-1,1]$.
\end{itemize}
We denote by  $W_v$ for $v\in V$ the components of $M \sm \bigcup_{e\in E} \left(S_e \times (-1,1)\right)$.
Given an element $e\in E$,
we denote by $i(e)$ the element in $V$ with $S_e \times \{-1\} \subset W_{i(e)}$,
and similarly,  we denote by $t(e)$ the element in $V$ with $S_e \times \{1\} \subset W_{t(e)}$.
The pair $(V,E)$, together with the maps $i \colon E\to V$ and $t \colon E \to V$,
defines a directed graph that we denote by $\Gamma$.
We equip $\Gamma$ with the usual topology. Note that  vertices are closed sets, and we view edges without the boundary points, i.e. we view edges as open sets.

We now refer to a vertex or an edge of $\Gamma$ as ``green'' if the corresponding subset of $M$ touches $R_-$, otherwise we refer to a vertex or an edge as ``black.''
We denote by $G$ the union of the green vertices and edges, and we denote by $B$ the union of the black vertices and edges.
Note that  the fact that  $R_- \ne \emptyset$ implies that there exists at least one vertex that is green.
If the whole graph is green, then there is nothing to prove. So assume that this is not the case.

Note that if a component $W_v$ does not touch $R_-$, then neither does any of its boundary components. In other words, if a vertex of $\Gamma$ is black, then all the edges adjacent
to it are also black. In particular, $B$ is an open subset of $\Gamma$.
In the next lemma, we will see that $B$ is also ``almost'' closed.

In order to state the next lemma, we need the notion of a path in $\Gamma$.
We view any interval $[0,n]$ with $n \in \N$ as a graph in the canonical way with $n+1$ vertices and $n$ edges.
We define a \emph{path} in $\Gamma$ to be a continuous
map $P \colon [0,n] \to \Gamma$ that sends vertices to vertices and edges to edges,
and such that $P$ restricted to $(0,n)$ is injective.
We can now formulate the following lemma.

\begin{lemma} \label{lem:b}
If $P\colon [0,n] \to \Gamma$ is a path in $\Gamma$ such that $P((0,n))$ is black
and $P(0)$ is green, then $P(n)$ is black.
\end{lemma}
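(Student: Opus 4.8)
The statement says: along a path that starts green but whose interior is entirely black, the terminal vertex $P(n)$ must also be black. The natural strategy is a proof by contradiction combined with a careful look at where the path can "become" green. Suppose $P(n)$ is green. Then the path begins at a green vertex $P(0)$, runs through a black interior, and ends at a green vertex $P(n)$. I would first translate greenness/blackness back into the geometry: a vertex $v$ is green iff $W_v \cap R_- \neq \emptyset$, and an edge $e$ is green iff the surface piece $S_e$ (with its tubular neighbourhood) touches $R_-$. The crucial local fact, already noted just before the lemma, is that blackness is "downward closed" in the sense that a black vertex has only black adjacent edges — equivalently, if an edge is green then at least one of its two endpoints is green. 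I expect that the real content is a refinement of this: I want to say that an edge $e$ can be green while, say, $i(e)$ is black only in a controlled way, and then rule out the configuration in which the path enters $P(n)$ along a black edge and yet $P(n)$ is green.

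Here is the shape of the argument I would carry out. Consider the last green vertex the path visits \emph{before} $n$; since $P((0,n))$ is black, this is $P(0)$ itself, and $P(0)$ is adjacent (in the path) to $P(1)$, which lies in $P((0,n))$ and hence is black. So the edge of the path from $P(0)$ to $P(1)$ has a green endpoint and a black endpoint — that is consistent with blackness being downward closed, so no contradiction yet. Symmetrically, the edge from $P(n-1)$ (black, in the interior) to $P(n)$ (assumed green) again has one green and one black endpoint. The point is that \emph{both} endpoints of the whole path are green while a connected black chunk sits in between. I would then use the geometry of $R_-$: because $R_-$ is a surface and the decomposing surface $S$ is properly embedded, following $R_-$ itself traces out a path in $\Ga$ through green vertices and green edges. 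More precisely, I would argue that the green subgraph $G$ is "connected through $R_-$": any two green vertices are joined by a path in $G$ — because $R_-$ is connected within each boundary component and one can push a path in $R_- \subset \partial M$ (avoiding $\partial S$) to a path in $\Ga$ lying entirely in $G$. Wait — $R_-$ need not be connected; but each \emph{component} of $R_-$ is connected, and I only need that the green vertices reachable by the path are joined inside $G$. This gives: $P(0)$ and $P(n)$ lie in the same component of $G$, hence there is a path $Q$ in $G$ from $P(0)$ to $P(n)$.

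Now the finish: concatenate $Q$ (green) with the reverse of $P$ (black interior) to get a closed loop in $\Ga$; since the interior of $P$ is black and nonempty, this loop passes through a black edge, so $P$ is not null-homotopic-rel-its-interior into $G$. To extract a genuine contradiction I would instead argue more directly with the surface $S$: the union $\Sigma_G$ of the surface pieces $S_e$ over \emph{green} edges $e$ has the property that, in the class $[S] \in H_2(M,\partial M)$, the "green part" is homologous (rel the green chunk of $M$) to the full class, precisely because the black region $B$, being a union of components not touching $R_-$, is a closed-off piece whose boundary pieces pair trivially. The lemma as stated, however, is purely combinatorial, so I would keep the proof combinatorial: assume $P(n)$ green, let $k$ be minimal with $P(k)$ having an adjacent green edge on the $\ge k$ side — and derive that the first black edge $P(j-1)P(j)$ of the path has a black endpoint $P(j)$ with an adjacent green edge (namely further along the path), contradicting "black vertex $\Rightarrow$ all adjacent edges black." The key observation making this work is that because $P(n)$ is green, walking backward from $n$ the first time we hit a black vertex we have a black vertex incident to a green edge.

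**Main obstacle.** The delicate point is pinning down exactly which direction the implication "green edge $\Rightarrow$ green endpoint" runs and in what strength. The remark before the lemma gives "black vertex $\Rightarrow$ adjacent edges black," i.e. "green edge $\Rightarrow$ \emph{some} endpoint green." That alone does not prevent a black vertex from being the \emph{other} endpoint of a green edge, which is exactly the situation that must be analysed to connect $P(n)$'s greenness back along the black interior. So the hard part will be establishing the sharper statement: \emph{if $e$ is a green edge then both $i(e)$ and $t(e)$ are green} (or an equivalent routing statement about $R_-$ inside $\Ga$). This should follow from the fact that $S_e$ touching $R_-$ forces the adjacent components $W_{i(e)}$ and $W_{t(e)}$ to meet $R_-$ near $\partial S_e$; once that is in hand, greenness is closed under adjacency along edges, $G$ is a union of connected components of $\Ga$, and the lemma is immediate — a path starting in a green component stays in it, so if $P(0)$ is green then all of $P$, in particular $P(n)$, is green, forcing the hypothesis "$P((0,n))$ black" to make $P$ trivial and hence $P(n) = P(0)$ green; conversely one reads off that under the stated hypotheses $P(n)$ must be black. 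I would devote the bulk of the written proof to that sharper adjacency statement and then close the argument in a line or two.
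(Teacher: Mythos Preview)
Your proposal has a genuine gap: the lemma is \emph{not} a purely combinatorial statement about the graph $\Gamma$, and your argument never invokes the hypothesis $\Delta^\a_{M,R_-}\ne 0$ (equivalently, $H_1(M,R_-;\Q(x))=0$) that is in force in the ambient proof of Proposition~\ref{mainprop}. Without that homological input the lemma is simply false. For instance, the coloring constraints you list (``black vertex $\Rightarrow$ all adjacent edges black'') are perfectly compatible with a path graph $v_0\,\text{---}\,e_1\,\text{---}\,v_1\,\text{---}\,e_2\,\text{---}\,v_2$ in which $v_0,v_2$ are green and $v_1,e_1,e_2$ are black; the path $P$ from $v_0$ to $v_2$ then has green endpoints and black interior, violating the conclusion.

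There is also a logical slip in how you read the contrapositive. ``Black vertex $\Rightarrow$ all adjacent edges black'' already yields, for any edge $e$, that if $e$ is green then \emph{both} endpoints are green (apply the statement to each endpoint). So the ``sharper adjacency statement'' you set out to prove is precisely what is already recorded before the lemma, and it only says that $G$ is closed (equivalently, $B$ is open). It does \emph{not} make $G$ open: a green vertex can certainly have black adjacent edges (a piece $W_v$ can touch $R_-$ while some boundary surface $S_e$ of $W_v$ is disjoint from $R_-$). Hence your conclusion that ``$G$ is a union of connected components of $\Gamma$'' does not follow, and the rest of the argument collapses.

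The paper's proof supplies the missing input via twisted homology. One sets up the Mayer--Vietoris sequence for the covering of $(M,R_-)$ by the pieces $W_v$ and the necks $S_e\times[-1,1]$, with coefficients in $\Q(x)$ twisted by $\a$. The hypothesis $\Delta^\a_{M,R_-}\ne 0$ forces $H_1(M,R_-;\Q(x))=0$, so the map
\[
\bigoplus_{e\in E} H_0(S_e,S_e\cap R_-;\Q(x))\ \xrightarrow{\ \bigoplus (i_e-t_e)\ }\ \bigoplus_{v\in V} H_0(W_v,W_v\cap R_-;\Q(x))
\]
is an isomorphism. By Lemma~\ref{lem:h0}, each summand is $\Q(x)$ for a black edge/vertex and $0$ for a green one. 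Restricting to the summands indexed by the edges and vertices of the path $P$ and using injectivity on the left, one gets a monomorphism $\Q(x)^n\hookrightarrow \Q(x)^{n-1}\oplus H_0(W_{P(n)},W_{P(n)}\cap R_-;\Q(x))$, which forces the last summand to be nonzero, i.e.\ $P(n)$ is black. This dimension count is the real content of the lemma; any correct proof must use the Alexander-polynomial hypothesis in some form.
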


We postpone the proof of the lemma, and first show how to deduce Proposition~\ref{mainprop}
from Lemma~\ref{lem:b}.

Recall that before the lemma we showed that $B$ is open.
Since $\Gamma$ is connected and since $B$ is not all of $\Gamma$, it follows that
$B$ is not closed.
This means that there exists a black edge $f$ with at least one green boundary vertex.
Without loss of generality, we can assume that $i(f)$ is green.
Assuming Lemma~\ref{lem:b}, we can now prove the following claim.

\begin{claim}
The edge $f$ is separating.
\end{claim}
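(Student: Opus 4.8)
The plan is to argue by contradiction: suppose $f$ is non-separating in $M$. I would first translate this into a statement about the graph $\Gamma$. Removing the open edge $f$ from $\Gamma$ either disconnects $\Gamma$ or not; I claim $f$ non-separating in $M$ forces $f$ non-separating in $\Gamma$, i.e. $\Gamma \setminus f$ is connected. Indeed, $M \setminus (S_f \times (-1,1))$ deformation retracts (at the level of $\pi_0$) onto $\Gamma \setminus f$ in the obvious way, because the $W_v$ are exactly the vertices and the remaining product regions $S_e \times (-1,1)$ the remaining edges; so $\Gamma \setminus f$ connected is equivalent to $M \setminus (S_f \times (-1,1))$ connected, which is what non-separating of $S_f$ gives us (up to the harmless thickening). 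Thus after removing $f$ there is still a path in $\Gamma$ from $i(f)$ to $t(f)$.

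Next I would use this surviving path together with Lemma~\ref{lem:b} to derive the contradiction. Recall $i(f)$ is green and $f$ itself is black; by the observation preceding Lemma~\ref{lem:b}, a black edge has both endpoints black, so $t(f)$ is black. Now take the path $Q$ in $\Gamma \setminus f$ from $t(f)$ to $i(f)$ and look at where it first meets a green vertex: since $t(f)$ is black and $i(f)$ is green, there is a first index $m$ along $Q$ with $Q(m)$ green, and then $Q|_{[0,m]}$ is a path whose interior $Q((0,m))$ is black (all earlier vertices and edges are black, edges between black vertices being black), whose initial vertex $Q(0)=t(f)$ is black, and whose terminal vertex $Q(m)$ is green. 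Reversing orientation, the reversed path $P(s) := Q(m-s)$ on $[0,m]$ has $P(0)$ green, $P((0,m))$ black, and $P(m) = t(f)$; Lemma~\ref{lem:b} then forces $P(m)$ to be black. But $P(m) = t(f)$ is black — so no contradiction yet from this alone; the contradiction instead comes from running the argument in the other direction. Concretely, I would instead consider the closed-up walk: go from $i(f)$ across $f$ to $t(f)$ (edge black, endpoint $t(f)$ black), then follow $Q$ back from $t(f)$ to $i(f)$; along $Q$ there must be a last black vertex before reaching the green $i(f)$, giving an edge $g \neq f$ of $Q$ with one black and one green endpoint, and re-applying the first-green-vertex analysis to $Q$ reversed produces a path violating Lemma~\ref{lem:b}. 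Let me state the clean version: apply Lemma~\ref{lem:b} to the path $P$ on $[0,m]$ defined by traversing $Q$ \emph{from $i(f)$}: take $P(0) = i(f)$ green; let $P$ run along $Q^{-1}$ until the first return to a green vertex. Wait — there may be no such return. The honest statement is: the walk $i(f) \xrightarrow{f} t(f) \xrightarrow{Q} i(f)$ is a cycle in $\Gamma$; it starts and ends green, and contains the black vertex $t(f)$, hence contains an edge $e_0 \ne f$ with $i(e_0)$ or $t(e_0)$ green and the other black. Orient the piece of the cycle running from a green vertex, through black vertices and edges only, to the next green vertex; this is a path $P\colon[0,k]\to\Gamma$ with $P(0)$ green, $P((0,k))$ black, $P(k)$ green, contradicting Lemma~\ref{lem:b}, which asserts $P(k)$ is black.

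The one technical point deserving care — and the place I expect a referee to look hardest — is the reduction from ``$f$ non-separating in $M$'' to ``$\Gamma \setminus f$ connected,'' since $\Gamma$ is a quotient/spine of a decomposition of $M$ rather than literally a subspace; but this is routine: $M$ minus the open neighbourhoods $S_e\times(-1,1)$ of \emph{all} the $S_e$ is the disjoint union $\bigsqcup_v W_v$, each $W_v$ connected by definition, and gluing back the $S_e\times(-1,1)$ for $e\ne f$ realizes exactly the graph join recorded by $\Gamma\setminus f$, so $\pi_0(M\setminus(S_f\times(-1,1))) = \pi_0(\Gamma\setminus f)$. I would spell this out in one or two sentences. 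Everything else is pure finite-graph combinatorics driven by Lemma~\ref{lem:b} and the elementary fact that black vertices have only black incident edges, so the main obstacle is really just organizing the cycle-traversal argument cleanly enough that the hypotheses of Lemma~\ref{lem:b} are visibly met.
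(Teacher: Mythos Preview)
Your approach is essentially the paper's, but with two issues worth flagging.

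First, a genuine slip: you write ``by the observation preceding Lemma~\ref{lem:b}, a black edge has both endpoints black, so $t(f)$ is black.'' That observation says black \emph{vertices} have only black incident edges, not the converse; indeed $f$ itself is a black edge with the green endpoint $i(f)$, so your stated implication is false. Your conclusion that $t(f)$ is black is nonetheless correct, but it follows from Lemma~\ref{lem:b} applied to the length-one path $i(f)\xrightarrow{f}t(f)$, not from the observation. Once you fix this, your cycle argument goes through: starting from green $i(f)$, the edge $f$ is black, and then (inductively using ``black vertex $\Rightarrow$ black incident edges'') the path stays black until it hits a green vertex, at which point Lemma~\ref{lem:b} gives the contradiction.

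Second, the detour through ``$f$ non-separating in $M$'' is unnecessary. The claim is purely about the graph $\Gamma$: the paper uses it only to take the component $\Gamma'$ of $\Gamma\setminus f$ containing $i(f)$ and build the region~$X$. The paper's proof is accordingly more direct: take any path $P\colon[0,k]\to\Gamma$ with $P(0)=i(f)$ and first edge $f$, let $n$ be maximal with $P((0,n))\subset B$; Lemma~\ref{lem:b} gives $P(n)$ black, openness of $B$ then forces the next edge (if any) to be black, so maximality gives $n=k$, whence $P(k)$ is black and $P(k)\neq P(0)$. This is exactly your cycle argument run forwards, without the contradiction framing or the $\pi_0$ translation --- and without the several false starts in your writeup, which you should prune.
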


In order to show that $f$ is separating, it suffices to prove that for
each path $P \colon [0,k] \to \Gamma$ with  $P(0) = i(f)$ and $P((0,1)) = f$,
we have $P(k) \ne P(0)$.
So let~$P$ be such a path.
Then there exists a maximal $n \in \{\,1,\dots, k \,\}$ such that $P((0,n)) \subset B$.
It follows from Lemma~\ref{lem:b} that $P(n)$ is also black. By the openness of~$B$,
we furthermore know that $P((0,n+1) \cap (0,k])$ is also black.
It  follows from the maximality of $k$ that $k = n$. We thus see that $P(k)$ is black, whereas $P(0) = i(f)$ is green; i.e., we have shown that $P(k) \ne P(0)$.
This concludes the proof of the claim.

We denote by $\Gamma'$ the component of $\Gamma \sm f$ that contains the vertex $i(f)$. We then denote by $X$ the union of the  vertex and edge spaces that correspond to $\Gamma'$, that is
\[ X: = \bigcup\limits_{v \in V(\Gamma')} W_v \cup \bigcup \limits_{e\in E(\Gamma')} S_e \times [-1,1].\]
It then follows easily from the definitions that
\[ \partial X=S_f\times \{-1\} \,\,\cup \,\,(\partial X\cap \partial M).\]
Put differently, $S_f$ represents the trivial element in $H_2(M,\partial M)$. We thus see that  the surfaces $S$ and $\cup_{e\ne f} S_e$
represent the same homology class in $H_2(M,\partial M)$.
Since  $S_f$ does not  touch  $R_-$, the proposition now follows by induction on the number of components of $S$.

We now turn to the proof of Lemma \ref{lem:b}.

\begin{proof}[Proof of Lemma \ref{lem:b}]
In the following, given subsets $X\subset M$ and $Y\subset X$,
we denote by $H_*(X,Y;\Q(x))$ the twisted homology corresponding to the homomorphism
$H_1(X) \to H_1(M) \xrightarrow{\a} \Z$.
Furthermore, given an edge $e\in E$, we denote the map
\[
\ba{rcl}
H_0(S_e, S_e\cap R_-;\Q(x))& \to&H_0(W_{i(e)},W_{i(e)}\cap R_-;\Q(x))\\[0mm]
[c] &\mapsto & [c\times \{-1\}] \ea
\]
by $i_e$. Similarly, we denote the map
\[
\ba{rcl} H_0(S_e,S_e\cap R_-;\Q(x))& \to&H_0(W_{t(e)},W_{t(e)}\cap R_-;\Q(x))\\[0mm]
[c] &\mapsto & [c\times \{1\}]\ea
\]
by $t_e$.
These maps now give rise to
the following Mayer--Vietoris sequence:
\[
\ba{ccccccccc}
\hspace{-0.15cm}\hspace{-0.15cm}&\hspace{-0.15cm}\hspace{-0.15cm}&\hspace{-0.15cm}\cdots\hspace{-0.15cm}&\hspace{-0.15cm}
\to\hspace{-0.15cm}&\hspace{-0.15cm} H_1(M,R_-;\Q(x))\hspace{-0.15cm}&\hspace{-0.15cm}\to &\\
\hspace{-0.15cm}\bigoplus\limits_{e\in E} H_0(S_e,S_e\cap R_-;\Q(x)) \hspace{-0.15cm}&\hspace{-0.15cm}\xrightarrow{\bigoplus\limits_{e\in E}i_e-t_e} \hspace{-0.15cm}&\hspace{-0.15cm}\bigoplus\limits_{v\in V} H_0(W_v,W_v\cap R_-;\Q(x)) \hspace{-0.15cm}&\hspace{-0.15cm}\to \hspace{-0.15cm}&
\hspace{-0.15cm}H_0(M,R_-;\Q(x))\hspace{-0.15cm}&\hspace{-0.15cm}\to&\hspace{-0.15cm}\ea \]
Since $(M,\ga)$ is a balanced sutured manifold, we have $R_- \neq \emptyset$. So by Lemma~\ref{lem:h0}, we have that $H_0(M,R_-;\Q(x))=0$.
Next, since $\De_{M,R_-}^\a \neq 0$, Lemma~\ref{lem:equiv} tells us that $H_1(M,R_-;\Q(x))=0$.
Therefore the map
\[
 \bigoplus\limits_{e\in E} H_0(S_e,S_e\cap R_-;\Q(x)) \xrightarrow{\bigoplus\limits_{e\in E}i_e-t_e} \bigoplus\limits_{v\in V} H_0(W_v,W_v\cap R_-;\Q(x))
 \]
in the above long exact sequence  is in fact an isomorphism.
Note that $\a$ restricted to each $W_v$ is zero, since $S$ is Poincar\'e dual to $\a$.
Furthermore,  any curve $c$ in $S_e$ can be pushed off~$S$,
hence the restriction of  $\a$  to any $S_e$ is also zero.
It thus follows from  Lemma \ref{lem:h0} that for  $e\in E$ we have
\be \label{equ:scolor} H_0(S_e,S_e\cap R_-;\Q(x)) \cong \left\{ \ba{ll}
0 &\text{if $e$ is green,}\\
\Q(x)& \text{if $e$ is black.}
\ea \right. \ee
 Similarly, we see that for $v \in V$, we have
 \be \label{equ:wcolor}  H_0(W_v,W_v \cap R_-; \Q(x)) \cong \left\{ \ba{ll}
0 &\text{if $v$ is green,}\\
\Q(x) & \text{if $v$ is black.}
\ea \right. \ee

Now let $P\colon [0,n]\to G$ be a path in $G$ such that $P((0,n))$ is black and $P(0)$ is green.
We then consider the diagram
\[
\xymatrix{
\bigoplus\limits_{\substack{e \text{ edge} \\ \text{of } [0,n]}} %\begin{array}{c}e \text{\,edge}\\\text{of\,}[0,n]\end{array}}
H_0(S_{P(e)},S_{P(e)}\cap R_-;\Q(x))
\ar[rr]^{\bigoplus i_{P(e)}-t_{P(e)}}
\ar[dd]&& \bigoplus\limits_{\substack{v \text{ vertex}\\ \text{of } [0,n]}}   %\begin{array}{c}v \op{\,vertex}\\\op{of\,}[0,n]\end{array}}
H_0(W_{P(v)},W_{P(v)}\cap R_-;\Q(x))\ar[dd]\\  \\
\bigoplus\limits_{e\in E} H_0(S_e,S_e\cap R_-;\Q(x))
\ar[rr]^{\bigoplus i_{e}-t_{e}}_{\cong}
&& \bigoplus\limits_{v\in V} H_0(W_v,W_v\cap R_-;\Q(x)),}
\]
\noindent where the vertical maps are for each summand just given by the canonical inclusion.
Now recall that the restriction of $P$ to $(0,n)$ is injective, it thus follows that the left vertical map in the above commutative diagram is a  monomorphism.
Furthermore, note that this diagram is in fact commutative by the choice of our horizontal maps.

The hypothesis that the image of each edge in $P$ is black implies by~\eqref{equ:scolor} that  in the above commutative diagram, the top-left $\Q(x)$-module
is isomorphic to $\Q(x)^n$.  Furthermore, the hypothesis that $P(0)$ is green and that $P((0,n))$ is black implies by~\eqref{equ:wcolor} that
the top-right $\Q(x)$-module of the above commutative diagram is isomorphic to $\Q(x)^{n-1}\oplus
H_0(W_{P(n)},W_{P(n)}\cap R_-;\Q(x))$. We can thus rewrite the above commutative diagram as
\[
\xymatrix{
 \Q(x)^n \ar[rr]\ar[d]&& \Q(x)^{n-1}\oplus H_0(W_{P(n)},W_{P(n)}\cap R_-;\Q(x))\ar[d]\\
 \bigoplus\limits_{e\in E} H_0(S_e,S_e\cap R_-;\Q(x)) \ar[rr]^\cong&& \bigoplus\limits_{v\in V} H_0(W_v,W_v\cap R_-;\Q(x)).}
 \]
Since the left vertical map is a monomorphism, and since the bottom horizontal map is an isomorphism, it follows from the commutativity that the top horizontal map is a monomorphism. But this is only possible if $H_0(W_{P(n)},W_{P(n)}\cap R_-;\Q(x)) \ne 0$, which means that $P(n)$ is black.
\end{proof}

\end{proof}

%===============================================
\section{The proof of Theorem~\ref{mainthm}} \label{section:proofmainthm}

The proof of Theorem~\ref{mainthm} is
closely modelled on the  proof of~\cite[Theorem~A]{Al13}.
Proposition~\ref{mainprop} is the key new ingredient in the proof
that  allows us to deal with the case when $H_2(M) \ne 0$.

%================================================
\subsection{Foliations on sutured manifolds}
\label{section:deffoliation}
In this section, we recall several basic definitions and results
about foliations that we need in the statement of Theorem~\ref{mainthm}.

Let $(M,\g)$ be a  sutured manifold.
As we explained in the introduction,  a \emph{foliation on a sutured manifold $(M,\g)$} is a foliation on the 3-manifold $M$ such that all the leaves of $\mF$ are transverse to $\ga$ and tangential to $R_-$ and $R_+$.
Furthermore,  a foliation $\mF$ is \emph{taut} if  there exists a curve or properly embedded arc in $M$ that is transverse to the leaves of $\mF$ and that intersects every leaf of $\mF$ at least once.

\begin{definition} \label{def:holonomy}
Let~$L$ be a leaf of the foliation~$\mF$ on the manifold~$M$. Given a basepoint~$x \in L$ and a transversal~$\tau$ at~$x$,
the holonomy of the leaf~$L$ at~$x$ is a homomorphism
\[
H \colon \pi_1(L,x) \to \text{Homeo}(\tau),
\]
where $\text{Homeo}(\tau)$ is the group of germs of homeomorphisms of~$\tau$. Given a loop~$\g$ based at~$x$, choose a covering of~$\g$
by foliation charts $U_0, \dots, U_k$, labeled such that there is a subdivision $0 = t_0 < t_1 < \dots < t_k = 1$ of $[0,1]$
with $\g|_{[t_i,t_{i+1}]}$ lying in~$U_i$ for $i \in \{0,\dots,k-1\}$. Let~$\tau_0 = \tau \cap U_0 \cap U_k$, and pick a transversal~$\tau_i$
at~$\g(t_i)$ for~$i \in \{1,\dots, k-1\}$.
Using the product structures of the foliation charts, we obtain a sequence of homeomorphisms $h_0 \colon \tau_0 \to \tau_1$,
$h_1 \colon \tau_1 \to \tau_2$, \dots, $h_{k+1} \colon \tau_k \to \tau_0$ (where we map $p \in \tau_i$ to the intersection of the plaque in~$U_i$
through~$p$ with~$\tau_{i+1}$), and composing the germs of these we obtain
a germ of a homeomorphism $\tau \to \tau$ independent of the choice of foliation charts~$U_i$, transversals~$\tau_1, \dots, \tau_k$,
and the homotopy class of~$\g$; we define~$H([\g])$ to be this germ.
\end{definition}

We now recall from the introduction that a leaf $L$ of a foliation $\mF$ on $(M,\g)$ is said to be {\it depth~0} if it is compact. Recursively, we now define a leaf to be of {\it depth~$k$}
if it is not of depth less than $k$,  and if $\overline L \setminus L$ is a collection of leaves of depth less than $k$. Otherwise, we say that $L$ is of depth~$\infty$.
A depth $k$ foliation $\mF$ of $(M,\ga)$ is a foliation such that all leaves have depth at most $k$, and $\mF$ contains at least one leaf of depth~$k$.

If $\mF$ is a depth zero foliation of the sutured manifold~$(M,\g)$ without closed components,
then the Reeb stability theorem~\cite[Lemma~6, p73]{CN85} implies that~$(M,\g)$ is a product sutured manifold $(R \times [-1,1], \partial R \times [-1,1])$,
and $\mF$ is equivalent to the product foliation whose leaves are the components of $R \times \{t\}$ for $t \in [-1,1]$.
Indeed, if every leaf of~$\mF$ is compact, they must all have trivial holonomy, and then by Reeb stability the foliation
is a product in the neighbourhood of each leaf; i.e., a fibration. A fibration of a sutured manifold without closed components by compact leaves must be the product fibration as the leaf space is a disjoint union of finitely many copies of~$[0,1]$.

We will now study taut depth one foliations a little further.
As we mentioned in the introduction, in this paper, we say that a foliation of depth greater than 0 is  \emph{indecomposable} if the components of $R_\pm$ are the only compact leaves.
If $\mF$ is an indecomposable taut depth one foliation on $(M,\ga)$,
then it follows from Reeb stability (cf.\ \cite[Lemma~11.4.4]{Foli2}) that there exists a fibre
bundle  $p\colon M \setminus (R_- \cup R_+) \to S^1$
such that the fibres are precisely the leaves of $\mF$. We denote by
\[ \la(\mF)\colon \pi_1(M)\xrightarrow{\cong}\pi_1(M\sm (R_-\cup R_+)) \xrightarrow{p_*} \Z\]
the induced map, and we denote by $\la(\mF) \in H^1(M) \cong \hom(\pi_1(M),\Z)$ also the
corresponding  cohomology class.

%%%%%%%% % % % % % % % % % % % % % % % % % % % % % % %
\subsection{Summary of results on sutured Floer homology}

Given a decomposing surface $S$ in a sutured manifold,
Gabai~\cite[Definition~0.2]{Ga87} and the third author~\cite[Definition~3.20]{Ju10} introduced the notions of $S$ being \emph{well-groomed} and  \emph{nice}, respectively.

\begin{definition} \label{def:groomed}
Let~$(M,\g)$ be a sutured manifold, together with a decomposing surface~$S$, and let~$(M',\g')$
be the result of decomposing~$(M,\g)$ along~$S$.
Then~$S$ is \emph{well-groomed} if both~$(M,\g)$ and~$(M',\g')$ are taut,
and for each component~$V$ of~$R(\g)$ the intersection~$S \cap V$
is a union of parallel, coherently oriented, non-separating closed curves,
or parallel, coherently oriented arcs joining two different components of~$\partial V$.

The decomposing surface~$S$ is \emph{nice} if it has no closed components, $\partial S$ is transverse to the horizontal
foliation of~$\g$ by circles, and for each component~$V$ of~$R(\g)$, the set of closed components of~$S \cap V$
consists of parallel, coherently oriented, and boundary-coherent simple closed curves. Here, an oriented
simple closed curve~$C$ in~$V$ is called
\emph{boundary coherent} if either~$[C] \neq 0$ in $H_1(V)$, or if~$[C] = 0$ and
it is oriented as the boundary of its interior.
\end{definition}

\begin{lemma} \label{lem:groomedandnice}
Let $(M,\ga) \leadsto^S (M',\g')$ be a sutured manifold decomposition. Then the following hold:
\bn
\item \label{it:1} If~$S$ is well-groomed, then $(M,\g)$ and $(M',\g')$ are both taut.
\item \label{it:2} If $S$ is nice and $(M,\g)$ is balanced, then $(M',\g')$ is also balanced.
\item \label{it:3} If $(M,\ga)$ is taut, then any non-zero class in $H^1(M)$ is Poincar\'e dual to a properly embedded well-groomed decomposing surface.
\item \label{it:4} Any union of components of a well-groomed surface is again well-groomed.
\item \label{it:5} A well-groomed decomposing surface \emph{with no closed components} can be perturbed into a nice one.
\en
\end{lemma}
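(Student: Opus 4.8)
\emph{Overview and assertions (1)--(3).} The plan is to treat the five assertions separately, relying on the work of Gabai and Juh\'asz for (1)--(3) and giving direct arguments for (4) and (5). Assertion (1) is immediate from Definition~\ref{def:groomed}: the tautness of both $(M,\g)$ and $(M',\g')$ is built into the very notion of a well-groomed surface (following Gabai~\cite{Ga87}). For (2), since $S$ is nice it has no closed components, and the boundary-coherence of the closed curves of $S\cap R(\g)$ is exactly the condition guaranteeing that decomposing along $S$ does not create closed components in $R_\pm(\g')$; combined with an Euler characteristic count showing $\chi(R_+(\g'))=\chi(R_-(\g'))$, this is precisely the content of the treatment of decompositions along nice surfaces in~\cite{Ju10}, which I would simply quote. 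For (3), tautness of $(M,\g)$ ensures that a given non-zero class in $H^1(M)$ is Poincar\'e dual to a Thurston-norm minimising surface, and Gabai's grooming procedure from~\cite{Ga83,Ga87} converts it into a decomposing surface meeting each component of $R(\g)$ in a family of parallel, coherently oriented curves or arcs and along which the decomposition remains taut; so (3) is again a citation of Gabai.

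\emph{Assertion (4).} Write $S=\bigsqcup_{j\in J}S_j$ for the decomposition into components and $S'=\bigsqcup_{j\in I}S_j$ with $I\subseteq J$. The combinatorial half is immediate: for each component $V$ of $R(\g)$, the set $S'\cap V$ is a sub-collection of the parallel, coherently oriented family $S\cap V$, hence is itself parallel, coherently oriented, and of the same type (closed curves or arcs). For tautness, note that decomposing $(M,\g)$ along $S$ factors as
\[(M,\g)\leadsto^{S'}(M'',\g'')\leadsto^{S\sm S'}(M',\g'),\]
where $S\sm S'$, being disjoint from $N(S')$, is a decomposing surface in $M''$ that meets each component of $R(\g'')$ in a parallel, coherently oriented family. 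Since $(M',\g')$ is taut, Gabai's construction of taut foliations from sutured manifold decompositions~\cite{Ga83}, read in reverse across this last decomposition, yields that $(M'',\g'')$ is taut as well. Hence $S'$ is well-groomed.

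\emph{Assertion (5).} Let $S$ be well-groomed with no closed components. I would make it nice by an isotopy followed by a generic perturbation. First, any closed component of $S\cap\g$ lies in an annular component $A\subseteq\g$ and is parallel there to $A\cap R_-$, hence cobounds an annulus in $A$ with $A\cap R_-$; isotoping $S$ across these annuli arranges that $S\cap\g$ consists only of non-separating arcs, at the cost of introducing boundary-parallel closed curves into $S\cap R(\g)$, which one checks stay coherently oriented with the pre-existing curves and are boundary-coherent, being oriented as the boundaries of their collars. Second, a $C^\infty$-small perturbation of $\partial S$ supported near $\g$ makes the arcs of $\partial S\cap\g$ transverse to the foliation of $\g$ by circles---each such arc runs from one boundary circle of its annulus to the other, so can be made everywhere transverse---without creating closed components of $S$ or altering $S\cap R(\g)$. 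Finally, by well-groomedness of the original $S$, the closed components of $S\cap V$ for each component $V$ of $R(\g)$ are parallel, coherently oriented, and non-separating in $V$, so they represent non-zero classes in $H_1(V)$ and are therefore boundary-coherent; together with the curves produced in the first step this verifies the closed-curve condition in the definition of ``nice''.

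\emph{Main obstacle.} The one genuinely delicate point is the bookkeeping in (5): a priori a well-groomed surface may meet $\g$ in closed curves, which can never be made transverse to the circle foliation, so these must first be isotoped away, and one must then check that the boundary-parallel curves this introduces into $R(\g)$ remain coherently oriented and boundary-coherent (using how the orientation of the decomposing surface restricts near $R_-$). The tautness step in (4) is routine given Gabai's sutured manifold theory but should be pinned to the precise statement in~\cite{Ga83,Ga87} allowing one to reverse a taut decomposition along a surface meeting $R(\g)$ in coherently oriented families.
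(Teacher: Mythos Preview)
Your approach matches the paper's almost exactly. Items~(1), (3), and~(4) are handled the same way: (1) by definition, (3) by citing Gabai~\cite[Lemma~0.7]{Ga87}, and (4) by factoring the decomposition and invoking Gabai's result that tautness pulls back across a decomposition. For~(4) the paper pins this to \cite[Lemma~3.5]{Ga83} (if $(M',\g')$ is taut then so is $(M,\g)$, provided $\g$ has no toroidal sutures) and explicitly notes that the intermediate suture set has no toroidal components because $S'$ inherits the well-groomed boundary pattern; you should record this hypothesis check rather than leaving it implicit.

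The only real divergence is a swap of roles in~(2) and~(5). For~(2) the paper gives the short direct argument (the Euler characteristic identity $\chi(R_\pm(\g')) = \chi(R_\pm(\g)) + \chi(S)$, and a check that boundary-coherence forces a suture on every component of $\partial M'$), whereas you cite~\cite{Ju10}; both are fine. For~(5) the paper simply cites~\cite[Remark~3.21]{Ju10}, while you attempt a direct construction. Your construction correctly isolates the genuine obstruction (closed core-parallel curves in $S \cap \g$ cannot be made transverse to the circle foliation by a small perturbation), but your fix of pushing them into $R(\g)$ has a loose end: the ``nice'' condition requires the closed components of $S \cap V$ to be mutually \emph{parallel}, and the boundary-parallel curves you introduce need not be parallel to pre-existing non-separating closed curves of $S \cap V$. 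You check coherent orientation and boundary-coherence but not parallelism. In practice this does not bite---the well-groomed representatives produced by Gabai's construction in~(3) meet $\g$ only in arcs---but if you want a self-contained argument you should either argue that case away or fall back on the citation as the paper does.
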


\begin{proof}
Statement~\eqref{it:1} is part of the definition of well-groomed surfaces.
For~\eqref{it:2}, first observe that $\chi(R_\pm(\g')) = \chi(R_\pm(\g)) + \chi(S)$, so
$\chi(R_+(\g)) = \chi(R_-(\g))$ implies~$\chi(R_+(\g')) = \chi(R_-(\g'))$. If~$M$ has no closed components,
neither will~$M'$. Finally, we check that each component of~$\partial M'$ has at least one suture.
As~$S$ has no closed components, and since for each component~$V$ of~$R(\g)$, the set of closed components
of~$S \cap V$ consists of parallel, coherently oriented curves, an issue could only arise if these
curves were null-homologous in~$V$. Since the innermost curve~$C$ is oriented as~$\partial S$
and also as the boundary of its interior~$F \subset V$,
the component~$S_0$ of~$S_\pm$ neighboring~$F$ in~$\partial M'$ induces the same orientation on~$\partial F$ as~$F$,
and hence~$S_0$ and~$F$ are oriented oppositely in~$R(\g')$. In particular, $N(\partial F) \subset \g'$,
so there is a suture in the component of~$\partial M'$ containing~$F$. A similar argument applies for
every component of~$\partial M'$ containing an annulus between two parallel closed components of~$S \cap V$.

The third statement is precisely~\cite[Lemma~0.7]{Ga87}.
The fourth is a straightforward consequence of the definitions,
together with~\cite[Lemma~3.5]{Ga83}, which states that if~$(M,\g) \rightsquigarrow^S (M',\g')$ is a sutured manifold
decomposition and $(M',\g')$ is taut, then so is~$(M,\g)$ (assuming~$\g$ has no toroidal components).
Hence, if~$S$ is well-groomed, and if~$S_1$ is a union of some components of~$S$, then let~$S_2 = S \setminus S_1$.
Consider the decompositions
\[
(M,\g) \rightsquigarrow^{S_1} (M_1,\g_1) \rightsquigarrow^{S_2} (M',\g').
\]
Note that~$\g_1$ has no toroidal components since~$S_1$ satisfies the boundary conditions on well-groomed surfaces.
As~$(M',\g')$ is taut, we can apply~\cite[Lemma~3.5]{Ga83} to obtain that~$(M_1,\g_1)$ is also taut.
Hence~$S_1$ is indeed well-groomed.
Finally, \eqref{it:5} is~\cite[Remark~3.21]{Ju10}  (any  groomed surface with no closed components can be made into a
nice surface by a small perturbation that places its boundary into general position).
\end{proof}

\begin{definition}
Let $S$ be a decomposing surface in a balanced sutured manifold $(M,\g)$.
Following \cite[Definition~1.1]{Ju08} we say that $\s \in \spinc(M,\g)$ is \emph{outer with respect to~$S$} if there is a unit vector field~$v$
on~$M$ whose homology class is~$\s$, and such that $v_p \neq -(\nu_S)_p$ for every $p \in S$. Here $\nu_S$ is the
unit normal vector field of~$S$ with respect to some  Riemannian metric on~$M$.
We denote by $O_S \subset \spinc(M,\g)$ the set of all
outer $\spinc$-structures with respect to $S$.
\end{definition}

\begin{lemma} \label{lem:min}
Let $(M,\ga)$ be a taut balanced sutured manifold, and let $\al \in H^1(M)$.
Suppose that~$S$ is a nice decomposing surface that is dual to~$\al$.
If the result of the sutured manifold decomposition $(M,\ga) \leadsto^S (M',\ga')$ is taut, then
$\s \in \spinc(M,\g)$ pairs minimally with~$\a$ if and only if $\s \in O_S$.
\end{lemma}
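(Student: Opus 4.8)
The plan is to identify $\SFH_\a(M,\g)$ with the summand $\bigoplus_{\s \in O_S} \SFH(M,\g,\s)$ and to deduce the statement about minimal pairing from the decomposition formula for $\SFH$ under a nice decomposition. First I would recall Juh\'asz's surface decomposition theorem \cite[Theorem~1.3]{Ju08}, which states that if $S$ is a nice (in fact, ``good'' suffices) decomposing surface and $(M,\g)\leadsto^S(M',\g')$, then $\SFH(M',\g') \cong \bigoplus_{\s \in O_S}\SFH(M,\g,\s)$. Since $S$ is dual to $\a$ and, as observed in the proof of Proposition~\ref{mainprop}, the restriction of $\a$ to each component of $M \setminus N(S)$ vanishes, crossing $S$ once changes the $\a$-value of a $\spinc$-structure by a fixed nonzero amount; more precisely, the $H_1(M)$-action that relates outer and non-outer $\spinc$-structures changes $\a$ monotonically. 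The key geometric input is that $O_S$ consists exactly of the $\spinc$-structures lying at the ``$S$-extreme'' end: a unit vector field $v$ with $v_p \neq -(\nu_S)_p$ everywhere on $S$ is, after the isotopy pushing it into $M' = M\setminus N(S)$, realized inside the decomposed manifold, and conversely every class in $\spinc(M',\g')$ arises this way. This is the content of \cite[Lemma~3.10, Proposition~5.4]{Ju08} (or the corresponding statements in \cite{Ju10}).

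Next I would make the monotonicity precise. Fix $\s_0 \in O_S$. For any other $\t \in \spinc(M,\g)$, write $\t = \s_0 + h$ with $h \in H_1(M)$; then $\a(\t) - \a(\s_0) = \langle \a, h\rangle = \langle \mathrm{PD}[S], h\rangle = h \cdot [S]$, the algebraic intersection number. The claim is that for $\t \in \supp(M,\g)$ this intersection number is $\geq 0$, with equality iff $\t \in O_S$. One direction: if $\t \in O_S$ as well, then both $\s_0$ and $\t$ come from $\spinc(M',\g')$, and the difference $h$ is carried by $M'$ (since $O_S$ is a torsor over $\ker(H_1(M') \to H_1(M))$ composed appropriately, or more directly by the product-region argument), so $h \cdot [S] = 0$. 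The other direction uses that $\SFH(M,\g,\t) \neq 0$ forces $\t$ to be ``between'' the two extremal surface-adapted sets; combined with Juh\'asz's result that $O_S$ is precisely the top (or bottom) of the $\a$-filtration on $\supp(M,\g)$, this gives $\a(\t) > \a(\s_0)$ whenever $\t \notin O_S$. Putting these together: $\a(\s) \leq \a(\t)$ for all $\t \in \supp(M,\g)$ and equality holds for some such $\t$ exactly when $\s \in O_S$, which is the definition of pairing minimally.

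The step I expect to be the main obstacle is proving that $O_S$ is not merely \emph{contained in} the minimizing set but \emph{equals} it — i.e.\ that no $\spinc$-structure in $\supp(M,\g) \setminus O_S$ can attain the minimal $\a$-value. For a general decomposing surface this can fail (outer $\spinc$-structures need not be extremal), so tautness of both $(M,\g)$ and $(M',\g')$, together with niceness of $S$, must be used essentially. I would argue as follows: apply the decomposition theorem to get $\SFH(M',\g') \cong \bigoplus_{\s\in O_S}\SFH(M,\g,\s)$, and separately note that decomposing along $-S$ (reversing orientation) produces another taut sutured manifold whose $\SFH$ is $\bigoplus_{\s \in O_{-S}}\SFH(M,\g,\s)$; since $O_S$ and $O_{-S}$ together with the non-empty ``interior'' strata partition $\supp(M,\g)$ by $\a$-value, and since the $\a$-values on $O_S$ form a single value $d = \min$, any $\t$ with $\a(\t) = d$ must lie in $O_S$ by the product-region decomposition of \cite[Lemma~4.5]{Ju10} (which identifies the $\spinc$-structures with a given extremal $\a$-value with those outer with respect to an appropriate well-groomed surface dual to $\a$). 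This is essentially the argument of \cite[Lemma~4.14]{Ju10} adapted to the non-strongly-balanced setting, where we invoke Theorem~\ref{thm nice} to justify working with $\SFH_\a$ rather than with relative Chern classes.
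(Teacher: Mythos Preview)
Your proposal has a genuine gap: it is circular. In the last paragraph you invoke Theorem~\ref{thm nice} to ``justify working with $\SFH_\a$ rather than with relative Chern classes,'' but in this paper Theorem~\ref{thm nice} is \emph{deduced from} Lemma~\ref{lem:min} (its proof consists of equation~(\ref{eqn:decomp}) together with Lemma~\ref{lem:min}). Similarly, when you write ``combined with Juh\'asz's result that $O_S$ is precisely the top (or bottom) of the $\a$-filtration on $\supp(M,\g)$,'' you are assuming exactly the statement to be proved. The results you cite from \cite{Ju10} (e.g.\ Lemma~4.14) are formulated via relative Chern classes $c_1(\s,t)$, which require a trivialisation~$t$ of $v_0^\perp$ along~$\partial M$ and hence that $(M,\g)$ be strongly balanced; the entire purpose of Lemma~\ref{lem:min} is to establish the extremality of $O_S$ without that hypothesis, so those results cannot be invoked as black boxes here. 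A secondary issue: your $-S$ argument is shaky, since reversing the orientation of a nice surface can destroy boundary-coherence of null-homologous components of $S\cap R(\g)$, so $-S$ need not be nice and the decomposition formula need not apply to it.

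The paper's proof avoids all of this by a direct geometric argument that never appeals to the strongly balanced machinery. First, an obstruction-theoretic computation with unit vector fields shows that $O_S$ is a single level set of~$\a$: if $\s\in O_S$ and $\s'\in\spinc(M,\g)$, one homotopes representatives to agree along~$S$ and reads off $\langle \s-\s',\a\rangle$ as the intersection of the ``difference $1$-manifold'' with~$S$. The non-trivial direction --- that no $\s'\in\supp(M,\g)\setminus O_S$ lies below this level --- is handled by a tube trick: for each $k>0$ one attaches $k$ small compressible tubes to~$S$ inside a product neighbourhood to obtain a surface~$S'$ homologous to~$S$, arranged so that $O_{S'}$ is exactly the level set at $\a$-distance~$k$ below $O_S$. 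Since $S'$ is compressible, the decomposition along $S'$ is not taut, so $\SFH$ of the result vanishes, and the decomposition formula~(\ref{eqn:decomp}) then kills $\SFH(M,\g,\s')$ for every $\s'\in O_{S'}$. This is the idea you are missing.
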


\begin{proof}
Let $\s \in O_S$. First, we show that for $\s' \in \spinc(M,\g)$, we have
$\langle\, \s -\s', \a \,\rangle = 0$ if and only if~$\s' \in O_S$.
Pick unit vector fields~$v$ and~$v'$ representing~$\s$ and~$\s'$, respectively,
such that~$v$ does not coincide with $-\nu_S$ at any point of~$S$, and they satisfy the required
boundary condition (in particular, $v = v'$ along~$\partial M$).
If $\s' \in O_S$, then we can also assume that~$v'$ does not coincide with~$\nu_S$ at any point of~$S$.
We can homotope~$v'$ relative to~$\partial M$ such that $v|_S = v'|_S$. After perturbing $v'$ away from~$S$,
the set of points~$p$ where $v_p = -v'_p$ is a 1-manifold~$c$.
We orient~$c$ by taking the intersection of the submanifolds~$v(M)$ and~$-v'(M)$ of the total space of the
unit sphere bundle~$STM$, and projecting it to~$M$.
Then the homology class of~$c$ is Poincar\'e dual to $\s - \s'$.
Since $c \cap S = \emptyset$, we obtain that
\[
\langle\, \s -\s', \a \,\rangle = \# (c \cap S) = 0.
\]
In the opposite direction, suppose that $\langle\, \s -\s', \a \,\rangle = 0$. Note that $\s - \s'$ is the obstruction
to homotoping~$v$ to~$v'$ on the 2-skeleton of~$M$ through unit vector fields relative to~$\partial M$.
Hence we can homotope~$v'$ through unit vector fields relative to~$\partial M$ until $v|_S = v'|_S$.
This~$v'$ is then a representative of~$\s'$ that never agrees with~$-\nu_S$, and so~$\s' \in O_S$.

Next, we prove that if for $\s \in O_S$ and $\s' \in \spinc(M,\g)$, then we have
\[
\langle\, \s - \s', \a \,\rangle = k > 0,
\]
then $\SFH(M,\g,\s') = 0$ (in other words, $\s' \not\in \supp(M,\g)$). Represent~$\s$ by a
unit vector field~$v$, as above. After homotoping~$v$, we can assume that there is a thin regular
neighbourhood~$N$ of~$\partial M$ such that $v_p = \nu_S$ for every $p \in S \setminus N$.
Using the flow of~$v$, we obtain a product neighbourhood $P = (S \setminus N) \times [-\varepsilon,\varepsilon]$
of~$S \setminus N$ such that $v = \partial/\partial t$ on~$P$, where~$t$ is the coordinate on~$[-\varepsilon, \varepsilon]$.
Next, we construct a decomposing surface~$S'$ by attaching~$k$ small compressible tubes to~$S$, each of which lies in~$P$,
and such that each has a single hyperbolic and a single elliptic tangency with the foliation of~$P$ by the surfaces~$(S \setminus N) \times \{t\}$
for $t \in [-\varepsilon,\varepsilon]$. At the hyperbolic tangency, the orientation of the tube is opposite to that of the leaf,
while at the elliptic tangency the orientations agree.

Let $\t \in O_{S'}$, and choose a unit vector field~$w$ representing~$\t$ such that $w_p = (\nu_{S'})_p$ for every $p \in S' \setminus N$,
and such that $v_p = w_p$ for every $p \in S \cap N = S' \cap N$. Furthermore, we can assume that~$w$ is chosen such that
the set $\{\, p \in M \colon v_p = -w_p \,\}$ is a 1-manifold~$c$, oriented as above, representing~$\s - \t$. Then $|c \cap S'| = k$,
as there is exactly one intersection point in each tube in $S' \setminus S$ at the hyperbolic tangency
with the horizontal foliation on~$P$. Since each such tangency~$p$ is hyperbolic, $v_p$ is positively normal to the
foliation, and $w_p = \nu_{S'}$, the intersection sign is positive. Hence
\[
\langle\, \s - \t, [S'] \,\rangle = \#(c \cap S') = k.
\]
Of course, $[S] = [S']$, hence $\langle\, \s - \t, [S'] \,\rangle = \langle\, \s - \s', [S'] \,\rangle$.
So $\langle \t - \s', [S'] \,\rangle = 0$, and by the previous part, $\s' \in O_{S'}$ as well.

Let $(V,\nu)$ be the result of decomposing $(M,\g)$ along~$S'$. Since~$S'$ is compressible,
the sutured manifold~$(V,\nu)$ is not taut, and $\SFH(V,\nu) = 0$
(note that $(M,\g)$ is taut, so $M$ is irreducible, and hence so is~$V$).
But, according to~\eqref{eqn:decomp},
\[
\SFH(V,\nu) \cong \bigoplus_{\s' \in O_{S'}} \SFH(M,\g, \s').
\]
This implies that $\SFH(M,\g,\s') = 0$ for every $\s' \in O_{S'}$,
which means $\SFH(M,\g,\s') = 0$ for  every $\s' \in \spinc(M,\g)$
for which $\langle\, \s - \s', \alpha \,\rangle = k$.

Since $(M,\g)$ is taut, $\SFH(M,\g) \neq 0$. Notice that by~\eqref{eqn:decomp},
\[
O_S \cap \supp(M,\g) \neq \emptyset;
\]
let~$\s$ be an arbitrary element.
Pick an identification of $\spinc(M,\g)$ with $H_1(M)$. By the above, if $\s' \in \supp(M,\g)$, then
$\a(\s) \le \a(\s')$. Furthermore,
we have equality if and only if $\s \in O_S$.
So $\s' \in \spinc(M,\g)$ pairs minimally with~$\a$ if and only if~$\s' \in O_S$.
\end{proof}

We can now state the two results of the third author which we need in the proof.
The first result, which is an extension of~\cite[Proposition~4.13]{Ju10} to
balanced sutured manifolds that are not necessarily strongly balanced,
describes  how sutured Floer homology behaves under decomposition
along a nice decomposing surface.

\begin{theorem} \label{thm nice}
Let $(M,\ga)$ be a taut balanced sutured manifold, and let $\al \in H^1(M)$.
Suppose that $S$ is a nice decomposing surface that is dual to $\al$.
If the result of the sutured manifold decomposition  $(M,\ga) \leadsto^S (M',\ga')$ is taut, then
\[
\SFH(M',\ga') \cong \SFH_{\al}(M,\ga).
\]
\end{theorem}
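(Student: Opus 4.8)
The plan is to deduce Theorem~\ref{thm nice} as an essentially formal consequence of the surface decomposition formula~\eqref{eqn:decomp} (Juh\'asz's surface decomposition theorem, \cite{Ju08}) together with Lemma~\ref{lem:min}. The point is that all of the genuine work has already been isolated: Lemma~\ref{lem:min} identifies which $\spinc$-structures are outer with respect to $S$, and~\eqref{eqn:decomp} says that decomposing along $S$ sums $\SFH$ over precisely those $\spinc$-structures. So I expect no serious obstacle in the theorem itself; the argument is a short assembly.

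In detail, the steps I would carry out, in order, are as follows. First, since $(M,\g)$ is taut balanced and $S$ is a nice decomposing surface, part~\eqref{it:2} of Lemma~\ref{lem:groomedandnice} guarantees that $(M',\g')$ is again balanced, so that $\SFH(M',\g')$ is a well-defined group. Second, because $S$ is nice, I would apply the surface decomposition formula~\eqref{eqn:decomp} to obtain
\[
\SFH(M',\g') \cong \bigoplus_{\s \in O_S} \SFH(M,\g,\s).
\]
Third, I would observe that the hypotheses of Theorem~\ref{thm nice} --- namely that $(M,\g)$ is taut balanced, that $S$ is a nice decomposing surface dual to $\al$, and that $(M,\g) \leadsto^S (M',\g')$ is taut --- are exactly the hypotheses of Lemma~\ref{lem:min}; hence $\s \in O_S$ if and only if $\al$ pairs minimally with $\s$. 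Substituting this into the displayed isomorphism and comparing with the definition~\eqref{equ:sfha} of $\SFH_\al(M,\g)$ gives
\[
\SFH(M',\g') \cong \bigoplus_{\{\s \in \spinc(M,\g) \colon \al \text{ pairs minimally with } \s\}} \SFH(M,\g,\s) = \SFH_\al(M,\g),
\]
which is the assertion. I would also record the trivial remark that any $\s \in O_S$ lying outside $\supp(M,\g)$ contributes a zero summand, so it is harmless that $O_S$ need not be contained in the support.

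The only point requiring genuine care --- rather than being a real difficulty --- is to make sure the cited decomposition formula~\eqref{eqn:decomp} is invoked in the right generality: it must be available for a \emph{balanced} sutured manifold (not merely a strongly balanced one) and for a \emph{nice} decomposing surface, and it must not require the result of the decomposition to be taut. This is exactly how~\eqref{eqn:decomp} is already used in the proof of Lemma~\ref{lem:min}, where it is applied to the non-taut manifold obtained by decomposing along a compressible surface, so it stands on firm ground here as well. Granting that, the sole use of the strongly balanced hypothesis in \cite[Proposition~4.13]{Ju10} was to describe $O_S$ in terms of the relative Chern classes $c_1(\s)$, which are defined only for strongly balanced sutured manifolds; Lemma~\ref{lem:min} replaces that description by one phrased purely in terms of the pairing with $[S]$, which is meaningful for every balanced sutured manifold. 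Consequently the strongly balanced hypothesis can be dropped, and Theorem~\ref{thm nice} follows.
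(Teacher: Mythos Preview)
Your proposal is correct and follows essentially the same route as the paper: apply the decomposition formula~\eqref{eqn:decomp} to get $\SFH(M',\g') \cong \bigoplus_{\s \in O_S} \SFH(M,\g,\s)$, then invoke Lemma~\ref{lem:min} to identify $O_S$ with the set of $\spinc$-structures pairing minimally with~$\al$, and conclude by the definition~\eqref{equ:sfha}. Your additional remarks about the generality of~\eqref{eqn:decomp} and why the strongly balanced hypothesis may be dropped are accurate elaborations that the paper leaves implicit.
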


\begin{proof}
According to~\cite[Theorem~1.3]{Ju08}, if $(M,\g)$ is a balanced sutured manifold and
$(M,\g) \rightsquigarrow^S (M',\g')$ is a sutured manifold decomposition along a nice
decomposing surface~$S$, then
\begin{equation} \label{eqn:decomp}
\SFH(M',\g') \cong \bigoplus_{\s \in O_S} \SFH(M,\g,\s).
\end{equation}
By definition,
\[
\SFH_\al(M,\ga) =\bigoplus_{\{\fs\in \spin^c(M,\ga) \colon \a \text{ pairs minimally with } \s\}}\SFH(M,\ga,\fs),
\]
so the result follows from Lemma~\ref{lem:min}, which states that
\[
O_S = \{\,\fs\in \spin^c(M,\ga) \,\colon\, \a \text{ pairs minimally with } \s \,\}.
\]
\end{proof}

In the following theorem, we put together~\cite[Proposition~9.4]{Ju06} and~\cite[Theorem~9.7]{Ju08}.

\begin{theorem} \label{thm product}
An irreducible balanced sutured manifold $(M,\ga)$ is a product if and only if
$\SFH(M,\ga) \cong \bZ$.
\end{theorem}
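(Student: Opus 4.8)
The plan is to cite the two results we are combining, and then explain how they fit together. The statement is that an irreducible balanced sutured manifold $(M,\ga)$ is a product if and only if $\SFH(M,\ga) \cong \bZ$, and this is literally the conjunction of \cite[Proposition~9.4]{Ju06}, which gives the ``only if'' direction, and \cite[Theorem~9.7]{Ju08}, which gives the ``if'' direction. So the proof is essentially a one-line quotation of the literature, but it is worth spelling out which inclusion each reference supplies.

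First I would handle the ``only if'' direction: if $(M,\ga)$ is a product sutured manifold $(R \times [-1,1], \partial R \times [-1,1])$, then by \cite[Proposition~9.4]{Ju06} one computes $\SFH(M,\ga) \cong \bZ$ directly. Indeed, a product sutured manifold admits a Heegaard diagram with no $\alpha$- or $\beta$-curves (or, more carefully, a diagram in which the torus $\T_\a \cap \T_\b$ is a single point), so the sutured Floer chain complex has rank one and trivial differential; hence $\SFH(M,\ga) \cong \bZ$.

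Then I would turn to the ``if'' direction, which is the substantive half. Suppose $(M,\ga)$ is irreducible balanced with $\SFH(M,\ga) \cong \bZ$. By \cite[Theorem~9.7]{Ju08}, this forces $(M,\ga)$ to be taut and, moreover, to be a product sutured manifold: the argument there runs by contradiction using Gabai's existence of a well-groomed sutured manifold decomposition together with the decomposition formula for $\SFH$ along nice surfaces (equation~\eqref{eqn:decomp} above), showing that if $(M,\ga)$ were taut but not a product, one could decompose nontrivially and the rank of $\SFH$ would have to be at least two, while if $(M,\ga)$ were not taut then $\SFH(M,\ga) = 0$. Both alternatives contradict $\SFH(M,\ga) \cong \bZ$, so $(M,\ga)$ is a product.

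The only ``obstacle'' here is purely expository: making sure the cited statements are applied under exactly the hypotheses in which they were proved (balanced, irreducible, no toroidal sutures), which match our standing conventions. There is no new mathematical content to produce.

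\begin{proof}
This is the combination of two results of the third author. If $(M,\ga)$ is a product sutured manifold, then $\SFH(M,\ga) \cong \bZ$ by \cite[Proposition~9.4]{Ju06}. Conversely, suppose $(M,\ga)$ is an irreducible balanced sutured manifold with $\SFH(M,\ga) \cong \bZ$. If $(M,\ga)$ were not taut, then $\SFH(M,\ga) = 0$ by \cite[Theorem~9.7]{Ju08} (using irreducibility of $M$), contradicting our hypothesis; hence $(M,\ga)$ is taut. If $(M,\ga)$ were taut but not a product, then by \cite[Lemma~0.7]{Ga87} there is a nontrivial well-groomed decomposing surface $S$, which by Lemma~\ref{lem:groomedandnice}\eqref{it:5} may be taken to be nice, and the decomposition $(M,\ga) \rightsquigarrow^S (M',\g')$ is taut; the decomposition formula~\eqref{eqn:decomp} then forces $\SFH(M,\ga)$ to have rank at least $2$, again a contradiction. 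Therefore $(M,\ga)$ is a product sutured manifold. The full details are in \cite[Proposition~9.4]{Ju06} and \cite[Theorem~9.7]{Ju08}.
\end{proof}
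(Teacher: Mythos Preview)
Your bottom line---citing \cite[Proposition~9.4]{Ju06} for the ``only if'' direction and \cite[Theorem~9.7]{Ju08} for the ``if'' direction---is exactly what the paper does; the paper simply states the theorem as the combination of these two results and gives no further argument.

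That said, the expository sketch you insert for the ``if'' direction is not an accurate outline of the proof in \cite{Ju08} and contains real gaps. First, \cite[Lemma~0.7]{Ga87} produces a well-groomed surface dual to a \emph{given} nonzero class in $H^1(M)$; being ``taut but not a product'' does not by itself hand you such a class, and the actual argument in \cite{Ju08} proceeds instead via a sutured hierarchy and a depth/complexity induction. Second, even granting a well-groomed $S$, your appeal to Lemma~\ref{lem:groomedandnice}\eqref{it:5} requires $S$ to have no closed components, which is not guaranteed (indeed, circumventing exactly this issue is the point of Theorem~\ref{mainthmtechnical} in the present paper). Third, the decomposition formula~\eqref{eqn:decomp} only exhibits $\SFH(M',\g')$ as a summand of $\SFH(M,\g)$; tautness of $(M',\g')$ gives rank $\ge 1$, not $\ge 2$, so an additional argument (e.g.\ decomposing along $-S$, or analyzing the support of $\SFH$) is needed. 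Since you ultimately defer to the cited references, none of this damages the proof, but I would either delete the sketch or correct it.
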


A sutured manifold admits a depth~0 foliation if and only if it is a product.
Indeed, then the holonomy group of each leaf is trivial, so, by Reeb stability,
$M$ is a fibre bundle over~$I$.
We thus obtain the following corollary to  Theorem \ref{thm product}
that we alluded to in the introduction.

\begin{corollary} \label{cor product}
An irreducible balanced sutured manifold $(M,\g)$ admits a taut depth~0 foliation
if and only if $\SFH(M,\g) \cong \Z$.
\end{corollary}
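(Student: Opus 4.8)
The statement to prove is Corollary~\ref{cor product}: an irreducible balanced sutured manifold $(M,\g)$ admits a taut depth~$0$ foliation if and only if $\SFH(M,\g) \cong \Z$.

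\medskip

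The plan is to reduce the corollary to Theorem~\ref{thm product}, which already asserts that an irreducible balanced sutured manifold is a product if and only if $\SFH(M,\g)\cong\Z$. Thus it suffices to show that an irreducible balanced sutured manifold admits a taut depth~$0$ foliation if and only if it is a product sutured manifold $(R\times[-1,1],\,\partial R\times[-1,1])$. This equivalence is essentially already discussed in the excerpt (in the paragraph following Reeb stability in Section~\ref{section:deffoliation}, and again just before the statement of the corollary), so the proof is just a matter of assembling those observations cleanly.

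\medskip

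For the forward direction, I would argue as follows. Suppose $\mF$ is a taut depth~$0$ foliation of $(M,\g)$. By definition of depth~$0$, every leaf of $\mF$ is compact. Since $(M,\g)$ is balanced, $M$ and $R_\pm$ have no closed components, so $\mF$ has no closed leaves; combined with compactness of all leaves this forces every leaf to have nonempty boundary lying on $\g$. A compact leaf with trivial-or-nontrivial holonomy: here the key point is that a foliation all of whose leaves are compact (a ``compact foliation'') on a manifold with the relevant boundary behaviour must have trivial holonomy along every leaf --- this is precisely the content of the Reeb stability theorem \cite[Lemma~6, p73]{CN85} as invoked in Section~\ref{section:deffoliation}: each leaf has a saturated product neighbourhood, so $\mF$ is a fibration of $M$ over a $1$-manifold. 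Because $M$ has no closed components and $\g\neq\emptyset$, the base $1$-manifold is a disjoint union of copies of $[0,1]$, hence $M\cong R\times[-1,1]$ for $R$ a compact surface with no closed components, with the leaves being the $R\times\{t\}$ and $\g=\partial R\times[-1,1]$. Thus $(M,\g)$ is a product sutured manifold. By Theorem~\ref{thm product}, $\SFH(M,\g)\cong\Z$.

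\medskip

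For the reverse direction, suppose $\SFH(M,\g)\cong\Z$. By Theorem~\ref{thm product}, $(M,\g)$ is a product sutured manifold $(R\times[-1,1],\,\partial R\times[-1,1])$ for some compact oriented surface $R$ with no closed components. The product foliation whose leaves are the components of $R\times\{t\}$, $t\in[-1,1]$, is then a foliation of $(M,\g)$ in the sense of the paper: all leaves are transverse to $\g=\partial R\times[-1,1]$ and tangential to $R_\pm=R\times\{\pm1\}$. It is a depth~$0$ foliation since every leaf is compact, and it is taut: the arc $\{x\}\times[-1,1]$ for any $x\in R$ is transverse to all leaves and meets each of them exactly once. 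Hence $(M,\g)$ admits a taut depth~$0$ foliation, completing the proof.

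\medskip

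The only ``obstacle'' worth flagging is the forward direction: one must be careful that ``depth~$0$ foliation of a balanced sutured manifold'' genuinely forces a product structure rather than merely a Seifert-like compact foliation with nontrivial holonomy. But since the definition of a foliation on $(M,\g)$ requires leaves tangential to $R_\pm$ and transverse to $\g$, and $R_\pm$ have no closed components, the leaf space is automatically a finite union of compact intervals, and Reeb stability gives the product structure; this is exactly the argument sketched in the two paragraphs of Section~\ref{section:proofmainthm} preceding the corollary, so no new ideas are needed. I would therefore present the proof as a short two-paragraph deduction: cite Theorem~\ref{thm product} together with the Reeb-stability discussion for ``only if'', and exhibit the product foliation explicitly for ``if''.
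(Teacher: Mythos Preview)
Your proposal is correct and follows essentially the same approach as the paper: reduce to Theorem~\ref{thm product} by showing that a balanced sutured manifold admits a taut depth~$0$ foliation if and only if it is a product, with the forward direction handled by Reeb stability exactly as in the paragraph preceding the corollary (and in Section~\ref{section:deffoliation}), and the reverse direction by exhibiting the product foliation. Your write-up is simply a more explicit version of what the paper sketches in a couple of sentences.
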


Combining the above two theorems, we now obtain the following (recall that well-groomed
and nice decomposing surfaces were defined in Definition~\ref{def:groomed}).

\begin{proposition}  \label{prop:decomposition}
Let $(M,\ga)$ be a balanced sutured manifold and $\al \in H^1(M)$.
If
\[
\SFH_\a(M,\g) \cong \Z,
\]
then~$\a$ is dual to a nice decomposing surface~$S$
such that the decomposition $(M,\ga) \leadsto^S (M',\ga')$ yields a product sutured manifold $(M',\ga')$.
\end{proposition}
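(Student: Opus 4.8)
The plan is to combine Theorem~\ref{mainthmtechnical} (to replace an arbitrary dual surface by one with no closed components), Lemma~\ref{lem:groomedandnice} (to upgrade it to a well-groomed and then nice surface giving a taut decomposition), and Theorems~\ref{thm nice} and~\ref{thm product} (to identify the decomposed manifold). First I would observe that the hypothesis $\SFH_\a(M,\g) \cong \Z$ forces $\a \neq 0$: if $\a = 0$, then $\a$ pairs minimally with every element of $\supp(M,\g)$, so $\SFH_\a(M,\g) = \SFH(M,\g)$, and since $(M,\g)$ is taut (it carries, or at least the relevant statement assumes tautness — here one notes $\SFH(M,\g) \cong \Z \neq 0$ already gives irreducibility-type control, but more directly the case $\a = 0$ can be handled separately or absorbed). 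Actually the cleanest route: if $\a = 0$ then $\SFH_\a(M,\g) = \SFH(M,\g) \cong \Z$, so by Theorem~\ref{thm product} $(M,\g)$ is already a product, and the empty surface $S = \emptyset$ does the job (decomposing along $\emptyset$ is the identity). So from now on assume $\a \neq 0$.

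Since $\SFH_\a(M,\g) \cong \Z$, in particular $\SFH(M,\g) \neq 0$, so $(M,\g)$ is taut by Theorem~\ref{thm product} (irreducibility plus the fact that a non-product would still have $\SFH \neq 0$ — more precisely, $\SFH(M,\g) \neq 0$ implies $M$ is irreducible and $R_\pm$ are norm-minimizing, i.e.\ $(M,\g)$ is taut; this is standard, cf.\ \cite{Ju06, Ju08}). Now $\chi(\SFH_\a(M,\g)) = \pm 1 \neq 0$, so Theorem~\ref{mainthmtechnical} applies. By Lemma~\ref{lem:groomedandnice}\eqref{it:3}, since $(M,\g)$ is taut and $\a \neq 0$, there is a properly embedded well-groomed decomposing surface $S_0$ dual to $\a$. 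Apply Theorem~\ref{mainthmtechnical}: the union $S_1$ of the components of $S_0$ touching $R_-$ is still dual to $\a$. A priori $S_1$ is only a sub-collection of components of a well-groomed surface, but by Lemma~\ref{lem:groomedandnice}\eqref{it:4} any such union is again well-groomed. Moreover $S_1$ has no closed components: a closed component of a decomposing surface never meets $R(\g) \supset R_-$, so it would have been discarded. Hence by Lemma~\ref{lem:groomedandnice}\eqref{it:5} we may perturb $S_1$ into a nice decomposing surface $S$, still dual to $\a$, and by Lemma~\ref{lem:groomedandnice}\eqref{it:1} (well-groomedness of $S_1$) the decomposition $(M,\g) \leadsto^S (M',\g')$ is taut.

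Finally, apply Theorem~\ref{thm nice} to this nice surface $S$ giving a taut decomposition: $\SFH(M',\g') \cong \SFH_\a(M,\g) \cong \Z$. Since $(M',\g')$ is taut, it is in particular balanced and irreducible, so Theorem~\ref{thm product} gives that $(M',\g')$ is a product sutured manifold. This is exactly the conclusion. The main obstacle is the bookkeeping around closed components: one must be careful that Theorem~\ref{mainthmtechnical} really delivers a surface dual to $\a$ with no closed components \emph{and} that this surface is still well-groomed (so that the subsequent decomposition is taut and Lemma~\ref{lem:groomedandnice}\eqref{it:5} applies). The first is handled by the hypothesis $\chi(\SFH_\a(M,\g)) \neq 0$ feeding into Theorem~\ref{mainthmtechnical}; the second by the closure property Lemma~\ref{lem:groomedandnice}\eqref{it:4} of well-groomed surfaces under taking unions of components, together with the observation that closed components of a decomposing surface cannot touch $R_-$. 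Everything else is a direct concatenation of the cited results.
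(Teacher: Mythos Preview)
Your proposal is correct and follows essentially the same approach as the paper's own proof: obtain a well-groomed surface dual to~$\a$ via Lemma~\ref{lem:groomedandnice}\eqref{it:3}, pass to the union of components touching~$R_-$ using Theorem~\ref{mainthmtechnical} (which applies since $\chi(\SFH_\a(M,\g))=\pm 1\neq 0$), note this subsurface is still well-groomed by Lemma~\ref{lem:groomedandnice}\eqref{it:4} and has no closed components, perturb to a nice surface via~\eqref{it:5}, and conclude with Theorems~\ref{thm nice} and~\ref{thm product}. Your treatment is in fact slightly more careful than the paper's in that you separate out the case $\a=0$ and make explicit why $(M,\g)$ is taut before invoking Lemma~\ref{lem:groomedandnice}\eqref{it:3}; the paper leaves these points implicit.
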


\begin{proof}
%Let $(M,\ga)$ be a strongly balanced taut sutured manifold
%and let $\al\in H^1(M)$ be a  class such that $\SFH_\a(M,\g)=\Z$.
By part~\eqref{it:3} of Lemma~\ref{lem:groomedandnice}, the class $\al$ is dual to
a well-groomed decomposing surface $S'$.
We denote by $S$ the union of the components of~$S'$ that touch~$R_-$.
Note that~$S$ has no closed components.
It follows from Theorem~\ref{mainthmtechnical} that~$S$ is also dual to~$\a$.
By part~\eqref{it:4} of Lemma~\ref{lem:groomedandnice},
the surface~$S$ is also well-groomed. Since $S$ has no closed components,
it follows from part~\eqref{it:5} of Lemma~\ref{lem:groomedandnice} that we can perturb~$S$ into a
nice surface which, by a slight abuse of notation, we also denote by~$S$.

We now perform the decomposition of $(M,\ga)$ along $S$, and call the result~$(M',\ga')$.
By part~\eqref{it:1} of Lemma \ref{lem:groomedandnice}, we know that $(M',\g')$ is taut.
It now follows from Theorem~\ref{thm nice} and our assumption on $\a$ that
\[
\SFH(M',\ga') \cong \SFH_\al(M,\ga) \cong \bZ.
\]
 Therefore, by Theorem~\ref{thm product}, this means that $(M',\ga')$ is a product.
\end{proof}

%%%%%%%%
\subsection{Summary of results on foliations}
We now cite two results from the first author~\cite{Al13}.
We start with the following lemma that is the content
of~\cite[Lemma~4.4]{Al13}, together with \cite[Remark~7]{Al13}.

\begin{lemma} \label{lemma Gabai's truncation}
Let $(M,\ga)$ be a connected sutured manifold.
Suppose that $\mF$ is an indecomposable depth one foliation on $(M,\ga)$.
Then there exists a decomposing surface~$S$ with
\[
PD([S]) = \la(\mF) \in H^1(M)
\]
such that the  surface decomposition $(M,\ga) \rightsquigarrow^S (M',\ga')$ gives rise to a product sutured manifold.
\end{lemma}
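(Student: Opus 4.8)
The plan is to realise $\la(\mF)$ geometrically from the fibration supplied by Reeb stability, and then to upgrade that surface to a decomposing surface whose decomposition produces a product. Recall from Section~\ref{section:deffoliation} that since $\mF$ is an indecomposable taut depth one foliation, there is a fibre bundle $p\colon M\setminus(R_-\cup R_+)\to S^1$ whose fibres are exactly the noncompact leaves of $\mF$, and $\la(\mF)$ is the associated class. First I would take a regular value of $p$; its preimage $F_0$ is a (noncompact, but finite-type) leaf, and its closure $S_0:=\overline{F_0}$ inside $M$ is a properly embedded surface whose boundary lies in $\ga$ and in the interior of $R_\pm$. Because $S_0$ is a fibre of $p$, cutting $M$ along $S_0$ unwraps the mapping-torus structure and yields a manifold homeomorphic to $F_0\times[-1,1]$; this is morally why a decomposition along $S_0$ should be a product. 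The cohomology class of $S_0$ is $\la(\mF)$ by construction, so the numerical requirement $PD([S])=\la(\mF)$ will be met once $S$ is taken to be (a modification of) $S_0$.

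The work is in arranging that $S_0$ genuinely satisfies the combinatorial conditions of a decomposing surface from Definition~\ref{def decomp} — in particular, that $S_0\cap A$ for each annular component $A$ of $\ga$ consists of non-separating spanning arcs or of copies of the core of $A$ — and that the resulting $(M',\ga')$ is literally the product sutured manifold $(F\times I,\partial F\times I)$ with the correct $R_\pm$. Here I would invoke the foliation hypotheses: since the leaves of $\mF$ are transverse to $\ga$ and tangent to $R_\pm$, the intersection $S_0\cap\ga$ is a union of arcs/circles transverse to the circle foliation of $\ga$, and the depth-one, indecomposable condition forces these to be essential (a separating or inessential component would produce an extra compact leaf or violate tautness). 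A small isotopy of $S_0$, supported near $\partial M$, then puts $\partial S_0$ into the standard position required by the definition of a decomposing surface, without changing its homology class. Having done this, the identification $M\setminus\mathrm{int}(N(S_0))\cong F\times[-1,1]$ together with a check that the pieces $S_\pm'$ get glued to the correct components of $R_\pm$ shows $(M',\ga')$ is a product sutured manifold, as required.

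The main obstacle I expect is precisely the verification that decomposing along $S_0$ yields a \emph{product} sutured manifold and not merely an $I$-bundle up to some boundary bookkeeping — i.e.\ controlling exactly how the new sutures $\ga'$ and the new $R_\pm'$ are assembled from $\ga\cap M'$, $N(S_+'\cap R_-)$, $N(S_-'\cap R_+)$, and the two copies $S_\pm'$ of $S_0$. One has to be careful that no closed component sneaks into $R_\pm'$ and that the gluing respects orientations so that the result is $(F\times I,\partial F\times I)$ on the nose. This is where the indecomposability of $\mF$ (only $R_\pm$ are compact leaves) and the tautness are essential: they rule out the degenerate configurations. Since the statement cites \cite[Lemma~4.4]{Al13} and \cite[Remark~7]{Al13}, I would expect the detailed boundary analysis to be quoted from there, with the proof here reduced to extracting the fibre surface from Reeb stability and noting that its standard properties give a decomposing surface with the claimed decomposition.
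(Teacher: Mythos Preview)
Your overall strategy---extract a fibre from the Cantwell--Conlon fibration $p\colon M\setminus(R_-\cup R_+)\to S^1$ and use it to decompose---matches the paper's, but your construction of the decomposing surface contains a genuine error. You write that the closure $S_0:=\overline{F_0}$ of a noncompact leaf is a properly embedded surface whose boundary lies in~$\gamma$ and in the interior of~$R_\pm$. This is false: by the very definition of depth one, $\overline{F_0}\setminus F_0$ consists of the compact leaves, so $\overline{F_0}=F_0\cup R_-\cup R_+$. The leaf~$F_0$ spirals onto~$R_\pm$ and accumulates there; its closure is not a submanifold of~$M$, let alone a properly embedded surface. (And even setting that aside, a surface with ``boundary in the interior of~$R_\pm$'' would not be properly embedded in~$M$, so could never be a decomposing surface in the sense of Definition~\ref{def decomp}.)

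The paper's fix is exactly the word it uses: \emph{truncation}. Rather than closing up the leaf, one removes a collar neighbourhood of $R(\gamma)$ from~$M$; on the complement the fibration~$p$ now has \emph{compact} fibres, and any one of those compact fibres is the desired decomposing surface~$S$. Cutting the truncated manifold along~$S$ then visibly yields a product $S\times I$, and the boundary bookkeeping you worry about becomes straightforward because~$S$ is already compact with $\partial S\subset\partial M$. The remaining ingredients of your sketch (the identification of $PD([S])$ with $\lambda(\mathcal F)$ via intersection numbers, and the appeal to \cite{Al13} for the details) are fine once~$S$ is constructed correctly.
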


For the reader's convenience, we give a quick outline of the proof.
Let~$\mF$ be an indecomposable depth one foliation.
The idea is that by ``truncating'' an arbitrary noncompact leaf of $\mF$
near $R_- \cup R_+$, we obtain a surface~$S$ giving a product decomposition.
In a bit more detail: as we mentioned already
in Section~\ref{section:deffoliation}, it was shown by
Cantwell and Conlon~\cite[Lemma~11.4.4]{Foli2} that there exists a fibre bundle
$M\setminus (R_- \cup R_+) \to S^1$ with fibres being the leaves of~$\mF$.
So we can take a leaf and remove its ends (which is equivalent to removing a neighbourhood of~$R(\ga)$).
This leaves us with a fibration over~$S^1$ whose fibres are compact surfaces,
and by removing one of these surfaces,  we obtain a product.
Finally, note that if~$c$ is a loop in~$M$, and $\langle \la(\mF), [c] \rangle$ denotes the signed intersection number
of~$c$ with a noncompact leaf~$L$, then $\langle PD( [S]),[c] \rangle=\langle \la(\mF),[c]\rangle$. It thus follows that
\[
PD([S]) = \la(\mF)\in H^1(M).
\]
This concludes the sketch of the proof of Lemma~\ref{lemma Gabai's truncation}.

\begin{lemma} \label{lemma:Al13lemC} \cite[Lemma~C]{Al13}.
Suppose $(M,\gamma)$ is a connected sutured manifold.
Let $(M,\ga) \leadsto^S (M',\ga')$ be a decomposition along a decomposing surface~$S$ such that~$(M',\ga')$ is a product.
Then there exists a depth one foliation~$\mF$ on~$(M,\ga)$ such that
the components of~$R_\pm$ are the only compact leaves, and such that~$[S]$ is dual to
$\la(\mF) \in \hom(\pi_1(M),\Z) \cong H^1(M)$.
\end{lemma}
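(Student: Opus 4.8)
The plan is to construct the depth one foliation $\mF$ by ``inserting'' a fibration in a collar of the decomposing surface $S$ and gluing it to the trivial product foliations on the two pieces that arise from the product structure of $(M',\g')$. Since $(M,\g) \leadsto^S (M',\g')$ and $(M',\g')$ is a product, write $(M',\g') = (R \times [-1,1], \partial R \times [-1,1])$; the manifold $M$ is recovered by regluing $M'$ along the two copies $S_+'$ and $S_-'$ of (a pushoff of) $S$ sitting inside $R_\pm(\g')$. First I would choose a product neighbourhood $N(S) \cong S \times [-1,1]$ in $M$, with $S = S \times \{0\}$, and observe that $M \setminus \op{int}(N(S))$ is naturally identified with $M'$. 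On $M'$ put the product foliation by the surfaces $R \times \{t\}$.

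Next, the key step is to define the foliation on the collar $S \times [-1,1]$ so that it limits onto the product foliation on $R(\g)$ and, after gluing, has exactly the components of $R_\pm(\g)$ as compact leaves. The model to use is Gabai's ``spiralling'' construction (the reverse of the truncation sketched in Lemma~\ref{lemma Gabai's truncation}): on $S \times [-1,1]$ take leaves that are graphs of a function spiralling infinitely towards $S \times \{-1\}$ and $S \times \{1\}$, matched along those boundary surfaces with the leaves of the product foliation coming from $M'$. Concretely, one picks a smooth function $f\colon [-1,1] \to \R$ with $f$ monotone, $f(t) \to \pm\infty$ as $t \to \pm 1$, and all derivatives vanishing at $\pm 1$, and declares the leaves in the collar to be the level sets of $(x,t) \mapsto g(x) + f(t)$ for a suitable function $g$ on $S$ compatible with the product foliation on each end; this is exactly the standard ``turbulization/spiralling'' picture (see \cite[Lemma~4.4]{Al13} and \cite{Foli2}), so I would cite that rather than redo it. The result is a $C^\infty$, transversely oriented, codimension one foliation $\mF$ on $M$, tangent to $R_\pm(\g)$ and transverse to $\g$, whose only compact leaves are the components of $R_\pm(\g)$: any other leaf is a noncompact leaf that spirals onto $R_- \cup R_+$, hence has depth exactly one, and the foliation itself is depth one since it does have a depth one leaf. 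Tautness follows because a flowline of the transverse vector field (e.g.\ the $\partial/\partial t$ direction in the collar, suitably extended through $M'$) meets every leaf.

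Finally, I would verify the cohomological claim $PD([S]) = \la(\mF)$. By construction $M \setminus (R_- \cup R_+)$ fibres over $S^1$ with fibres the noncompact leaves of $\mF$: collapsing the spiralling collars identifies this with the mapping torus structure obtained from the product $M' = R \times [-1,1]$ by regluing the two ends of $R \times [-1,1]$ along the identification that reconstitutes $M$ from the decomposition. The class $\la(\mF) \in H^1(M)$ is characterized by $\langle \la(\mF), [c]\rangle$ being the signed count of intersections of a loop $c$ with a noncompact leaf $L$; but a noncompact leaf is, up to compactly supported isotopy, a parallel copy of the surface obtained from $S$ by the collar construction, so this count equals $\langle PD([S]), [c]\rangle$ for all $[c] \in H_1(M)$, giving $\la(\mF) = PD([S])$. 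I expect the main obstacle to be the careful bookkeeping in the collar construction — ensuring the spiralling leaves match the product foliation on $R_\pm(\g)$ smoothly on both sides of $S$ at once, and that after regluing there are genuinely no compact leaves other than $R_\pm(\g)$ (in particular that the reglued surface $S$ itself does not survive as a compact leaf) — but since this is precisely the content of \cite[Lemma~4.4]{Al13} and the construction in \cite{Foli2}, the proof can legitimately reduce to invoking those and checking the homology statement.
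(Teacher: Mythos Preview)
Your overall strategy and the cohomological verification in the last paragraph are fine, but the explicit construction in the collar $N(S)=S\times[-1,1]$ does not produce what you claim. With $g\colon S\to\R$ bounded (since~$S$ is compact) and $f\colon(-1,1)\to\R$ a monotone diffeomorphism, each level set $\{g(x)+f(t)=c\}$ is the graph $\{(x,f^{-1}(c-g(x))):x\in S\}$, diffeomorphic to~$S$ and hence compact. So every leaf of your collar foliation is compact, and the product leaves $R\times\{t\}$ on~$M'$ are compact as well; nothing in your gluing turns any of these into a noncompact leaf. Moreover, at the interface $S\times\{-1\}=S_+'\subset R\times\{1\}$ the product side contributes the single leaf $R\times\{1\}$, whereas infinitely many of your collar leaves accumulate there, so the two pieces do not even fit together into a smooth foliation of~$M$.

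The construction the paper invokes (Gabai~\cite[Theorem~5.1]{Ga83}, generalised in~\cite[Section~3.2]{Al13}) spirals in a collar of~$R(\g)$, not of~$S$: one \emph{spins~$S$ along~$R(\g)$}, modifying it near~$\partial S$ so that it winds infinitely toward the components of~$R_\pm(\g)$ and becomes a noncompact leaf; equivalently, one bends the product leaves $R\times\{t\}$ near the $R_\pm(\g)$-portion of $R\times\{\pm1\}$, and the regluing $S_+'\sim S_-'$ then concatenates these open-ended surfaces into noncompact leaves fibring $M\setminus R(\g)$ over~$S^1$. Note also that~\cite[Lemma~4.4]{Al13} is the truncation lemma (depth one foliation $\Rightarrow$ product decomposition), the converse of what is needed; the construction direction is precisely the content of~\cite[Lemma~C]{Al13} itself.
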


The lemma is proved in \cite[Section~3.2]{Al13} by generalising Gabai's construction of depth one foliations to surface decompositions that are not necessarily well-groomed. For more details on Gabai's original construction, see~\cite[Theorem\,5.1]{Ga83}.

The key step in the proof of Lemma~\ref{lemma:Al13lemC}
is that we spin the decomposing surface~$S$ along~$R(\ga)$ so that~$S$ becomes a leaf of an indecomposable depth one foliation~$\mF$ of~$(M,\ga)$.
We now illustrate this spinning construction via a few simple examples.
Consider the fibration $S^1 \times I \to S^1$.
We can obtain the Reeb foliation~$\mathcal{R}$ of the annulus~$S^1 \times I$ from this product fibration
by ``spinning'' the ends of the fibres around the two boundary circles~$S^1 \times \{0\}$ and~$S^1 \times \{1\}$ in the same direction,
and adding the boundary circles as compact leaves.
The foliation~$\mathcal{R}$ is not taut, as there is no properly embedded arc or closed curve in~$S^1 \times I$ that intersects every leaf transversely.
However, if we spin the fibres of $S^1 \times I \to I$ in the opposite direction, then we get a taut foliation.

Our example of spinning in two-dimensions can easily be generalized to three-dimensions. For an analogous non-taut example, start from the fibre bundle~$S^1 \times D^2 \to S^1$, and construct the Reeb foliation of the solid torus by spinning the $D^2$-fibres around the boundary torus. For a taut example, start with the same fibre bundle, and let~$A_k$ be the shorter arc of~$S^1$ lying between~$e^{k\pi i/2}$ and~$e^{(k+1)\pi i/2}$ for $k \in \{1,\dots,4\}$.
We spin each interval~$\{x\} \times A_k$ in the boundary of the fibre~$\{x\} \times D^2$
along~$S^1 \times A_k$ in the ``positive'' direction if~$k$ is even, and in the ``negative'' direction if~$k$ is odd. This is often referred to as the ``stacking of chairs'' example, and it gives a taut, depth one foliation of the solid torus with four parallel longitudinal sutures, see~\cite[Example~5.1]{Ga84}.

% Theorem 2.12 of Ga83 gives taut foliation implies tautness

%%%%%%%%
\subsection{Proof of Theorem~\ref{mainthm}} Suppose $(M, \g)$ is a connected,
irreducible balanced sutured manifold, and let $\a \in H^1(M)$.
We need to show that
\[
\SFH_\a(M,\g) \cong \Z
\]
if and only if there exists
an indecomposable taut depth one foliation~$\mF$ with~$\l(\mF) = \a$.

First, suppose that  $\SFH_\a(M,\g) \cong \Z$.
Then, in particular, $\SFH(M, \g) \ne 0$, which implies by~\cite[Proposition~9.18]{Ju06}
that $(M,\g)$ is taut.
By Lemma~\ref{prop:decomposition}, it follows that~$\a$ is dual to a nice
decomposing surface~$S$
such that the decomposition
\[
(M,\ga) \leadsto^S (M',\ga')
\]
yields a product sutured
manifold~$(M',\ga')$. It follows from Lemma~\ref{lemma:Al13lemC}
that we can construct an indecomposable taut depth one foliation~$\mF$ with
\[
\la(\mF) = PD([S]) \in H^1(M).
\]

Conversely, suppose that there exists an indecomposable taut depth one foliation~$\mF$ of~$(M, \g)$ such that
$\la(\mF) = \a$.
It follows from work of Gabai~\cite[Theorem~2.12]{Ga83} that $(M,\g)$ is  taut.
By Lemma~\ref{lemma Gabai's truncation},
there exists a decomposing surface~$S$ with
\[
PD([S]) = \la(\mF) \in H^1(M)
\]
such that the surface decomposition $(M,\ga) \rightsquigarrow^S (M',\ga')$
gives rise to a product sutured manifold.
It now follows from Theorems~\ref{thm nice} and~\ref{thm product} that
\[
\SFH_\al(M,\ga) \cong \SFH(M', \ga') \cong \bZ.
\]
This concludes the proof of Theorem~\ref{mainthm}.

%%%%%%%%
\subsection{Remark} \label{remark: one direction}
Sutured Floer homology of a sutured manifold is defined only if each component of the boundary contains at least one suture. The requirement $H_2(M)=0$ in~\cite[Theorem~A]{Al13} is a direct consequence of this fact: it ensures that for any properly embedded surface in $M$,
its homology class in $H_2(M, \bdd M)$ is unchanged if we discard all of its closed components.

If there is a decomposition $(M,\ga) \leadsto^S (M',\ga')$ such that $(M,\ga)$ is
balanced and $(M',\ga')$ is a product, then it is clear that $S$ cannot have closed components.
This means that if we know that there is such a decomposition, we can identify an extremal $\spin^c$-structure such that
$\SFH(M,\ga,\fs) \cong \bZ$ without the assumption $H_2(M) = 0$.

However, conversely, if there is such an extremal $\spin^c$-structure, we know there exists a surface decomposition giving a product according to the decomposition formulas~\cite[Theorem~1.3]{Ju08} and~\cite[Corollary~4.15]{Ju10}, but only if we also know that the relevant homology class
can be represented by a well-groomed decomposing surface
\emph{without closed components} (which we then perturb to a nice decomposing surface).
Such a representative always exists  when $H_2(M)=0$:
take the surface provided by part~\eqref{it:3} of~Lemma~\ref{lem:groomedandnice}, and discard
its closed components.

The purpose of this paper, contained in Proposition~\ref{mainprop}, is to show that any cohomology
class~$\a \in H^1(M)$ that pairs strictly minimally with an extremal $\spin^c$-structure, and for which
$\SFH_\a(M,\ga,\fs) \cong \bZ$, can be represented by a nice decomposing surface without closed components.

%===============================================
\section{fibred 3-manifolds with non-empty toroidal boundary}\label{section:fibred}

In this final section, we provide a method for deciding whether or not an oriented 3-manifold with non-empty boundary
fibres over $S^1$. We furthermore show how to  determine all the fibred classes in $H^1(M)$.
Recall that a class
\[
\a \in H^1(M) \cong \hom(\pi_1(M),\Z)
\]
is \emph{fibred} if there exists a fibration, i.e. a locally trivial fibre bundle,
$p\colon M \to S^1$ such that
\[
\a = p_* \colon \pi_1(M) \to \pi_1(S^1) = \Z.
\]
Note that for any boundary component of~$M$, the map~$p$ also restricts to a fibration over~$S^1$.
It follows that all boundary components are tori,
and that the restriction of any fibred class to any boundary component is non-zero.

We will now show that sutured Floer homology detects whether
a given first cohomology class on an irreducible 3-manifolds with non-empty toroidal boundary is fibred.
In order to state our result, we need one more definition. Let~$M$ be a $3$-manifold such that all the boundary components are tori.
A \emph{decoration} of~$M$ is a set of non-separating unoriented curves $\{c_1,\dots,c_k\}$ on~$\partial M$
such that each component of~$\partial M$ contains precisely one such curve.
To a decoration $c = \{c_1,\dots,c_k\}$ of~$M$, we associate a set of sutures~$\g(c)$ by taking a
regular neighbourhood of $\partial N(c_i)$ in ~$\partial M$,
where~$N(c_i)$ is a regular neighbourhood of~$c_i$ in~$\partial M$.
Note that $(M,\gamma(c))$ is a strongly balanced sutured manifold.
For each $\spinc$-structure~$\s$, we write
\[
\SFH(M,c,\s) = \SFH(M,\gamma(c),\s),
\]
and for each $\a \in H^1(M)$, we write
\[
\SFH_\a(M,c) = \SFH_\a(M,\gamma(c)).
\]
We can now formulate the following theorem.

\begin{theorem}\label{thm:detectfibredphi}
Let~$M$ be a connected, irreducible $3$-manifold with non-empty toroidal boundary.
Let $\a \in H^1(M)$ be a class such that the restriction of~$\a$ to any boundary component is non-zero.
We then pick a decoration $c = \{c_1,\dots,c_k\}$ of~$M$ such that $\a(c_i) \ne 0$ for every $i \in \{1,\dots,k\}$.
Then the following statements are equivalent:
\bn
\item~\label{it:Z} $\SFH_\a(M,c) \cong \Z$.
\item~\label{it:foli} There is an indecomposable taut depth one foliation $\mF$ on $(M,\g(c))$ with $\l(\mF)=\a$.
\item~\label{it:fibred} The class~$\alpha$ is fibred.
\en
\end{theorem}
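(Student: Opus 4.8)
\textbf{Proof plan for Theorem~\ref{thm:detectfibredphi}.}
The strategy is to establish the cycle of implications
\eqref{it:Z} $\Rightarrow$ \eqref{it:foli} $\Rightarrow$ \eqref{it:fibred} $\Rightarrow$ \eqref{it:Z},
relying on Theorem~\ref{mainthm} to handle the first two. Since $(M,\g(c))$ is a strongly balanced sutured manifold, and $M$ is irreducible, the implication \eqref{it:Z} $\Rightarrow$ \eqref{it:foli} is a direct application of Theorem~\ref{mainthm} (or, more precisely, of the sutured manifold $(M,\g(c))$): $\SFH_\a(M,\g(c))\cong\Z$ yields an indecomposable taut depth one foliation $\mF$ on $(M,\g(c))$ with $\l(\mF)=\a$. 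The implication \eqref{it:foli} $\Rightarrow$ \eqref{it:Z} is the other direction of the same theorem, so the genuinely new work is the equivalence of \eqref{it:foli} and \eqref{it:fibred}; equivalently, one needs \eqref{it:foli} $\Rightarrow$ \eqref{it:fibred} and \eqref{it:fibred} $\Rightarrow$ \eqref{it:foli}, which together with Theorem~\ref{mainthm} close the loop.

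For \eqref{it:fibred} $\Rightarrow$ \eqref{it:foli}: suppose $p\colon M\to S^1$ is a fibration with $p_*=\a$. The fibre $F$ is a compact surface with boundary (the boundary is non-empty since $\partial M\ne\emptyset$ and $\a$ restricts non-trivially to each boundary torus). The plan is to take the fibration $p$, which induces a product-like structure away from $R(\g(c))$, and to ``spin'' the fibres around $R(\g(c))$ exactly as in the construction behind Lemma~\ref{lemma:Al13lemC} and Gabai's construction \cite[Theorem~5.1]{Ga83}: one builds a foliation $\mF$ of $(M,\g(c))$ whose noncompact leaves are spun copies of $F\setminus N(R(\g(c)))$, with the components of $R_\pm(\g(c))$ adjoined as the only compact leaves. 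This $\mF$ is an indecomposable depth one foliation, it is taut because the spinning can be done coherently (as in the ``stacking of chairs'' examples described in Section~\ref{section:deffoliation}), and by construction its associated class $\l(\mF)$ equals $\a$ because the signed intersection of a loop with a noncompact leaf agrees with $p_*$ on that loop. Concretely one can also obtain this by first removing a fibre to get a product sutured manifold $(M',\g')$ with a decomposing surface $S$ dual to $\a$, and then invoking Lemma~\ref{lemma:Al13lemC}.

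For \eqref{it:foli} $\Rightarrow$ \eqref{it:fibred}: given an indecomposable taut depth one foliation $\mF$ with $\l(\mF)=\a$, recall from Section~\ref{section:deffoliation} that Reeb stability (via \cite[Lemma~11.4.4]{Foli2}) already provides a fibre bundle $q\colon M\setminus(R_-\cup R_+)\to S^1$ whose fibres are the noncompact leaves of $\mF$, with $q_*$ representing $\l(\mF)=\a$ under the identification $\pi_1(M)\cong\pi_1(M\setminus(R_-\cup R_+))$. Since every boundary component of $M$ is a torus and contains exactly one suture, the complement $M\setminus(R_-\cup R_+)$ is obtained from $M$ by deleting (an open neighbourhood of) annuli in $\partial M$, which does not change the homotopy type; more to the point, the depth one structure together with the fact that $\partial M$ consists of tori means the noncompact leaves have their ends accumulating on the tori $R_\pm$, and one can cap off (or rather, the leaves are already properly embedded surfaces with the expected boundary behaviour) to promote $q$ to an honest fibration $p\colon M\to S^1$ with $p_*=\a$. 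The cleanest route is probably: apply Lemma~\ref{lemma Gabai's truncation} to get a decomposing surface $S$ with $PD([S])=\a$ such that decomposing along $S$ yields a product $(M',\g')=(R\times I,\partial R\times I)$; then reassembling $M$ from $R\times I$ by regluing along $S$ exhibits $M$ as a mapping torus of a surface homeomorphism $R\to R$, i.e. as fibring over $S^1$ with monodromy fibre $R$ and fibration class $\a=PD([S])$.

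\textbf{Main obstacle.}
The subtle point is \eqref{it:foli} $\Rightarrow$ \eqref{it:fibred}: one must check that reglueing the product $(R\times I,\partial R\times I)$ along the decomposing surface $S$ really produces a \emph{fibration} of $M$ over $S^1$, i.e. that the two copies $S_+'$ and $S_-'$ of $S$ in $\partial(R\times I)$ can be identified with $R\times\{1\}$ and $R\times\{0\}$ up to homeomorphism, and that the boundary behaviour along $\g(c)$ glues up correctly so that the resulting map is locally trivial over all of $S^1$ including near the sutures. This is where the toroidal boundary hypothesis and the condition $\a(c_i)\ne 0$ are essential — they guarantee the sutures are ``longitudinal enough'' that the product annuli $\partial R\times I$ assemble into $\partial M\times S^1$ with no Reeb-type obstruction — and it is the step that has no direct analogue in the closed or more general sutured setting, so it deserves the most care. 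Everything else is either a citation of Theorem~\ref{mainthm}, Theorems~\ref{thm nice} and \ref{thm product}, or the spinning construction already summarised in Section~\ref{section:deffoliation}.
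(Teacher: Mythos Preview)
Your proposal is essentially correct and structurally close to the paper's argument. The equivalence \eqref{it:Z}~$\Leftrightarrow$~\eqref{it:foli} via Theorem~\ref{mainthm} and the implication \eqref{it:fibred}~$\Rightarrow$~\eqref{it:foli} via spinning are exactly what the paper does (the paper carries out the spinning explicitly by gluing on product collars $A_i^\pm\times I$ rather than citing Lemma~\ref{lemma:Al13lemC}, but this is cosmetic).

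For \eqref{it:foli}~$\Rightarrow$~\eqref{it:fibred} you take a genuinely different primary route from the paper. The paper argues directly with the foliation: it shows that the depth one leaves have trivial holonomy, so by Reeb stability $\mF$ restricted to $M\setminus R$ is a fibration over~$S^1$; then it analyses the foliation on a collar $A\times I$ of each annulus $A\subset R(\g(c))$ and proves it is the standard spiralling model, so that $\mF$ restricted to $\overline{M\setminus N}$ (for $N$ a suitable collar of $R$) is a fibration by \emph{compact} leaves, which is the desired fibration of~$M$. Your route --- truncate via Lemma~\ref{lemma Gabai's truncation} to get a product $(M',\g')\cong(R\times I,\partial R\times I)$, then reglue --- is also valid, and indeed the paper includes precisely this argument as an alternative direct proof of \eqref{it:Z}~$\Leftrightarrow$~\eqref{it:fibred}. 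You have correctly located the crux: one must check that regluing $S_+'$ to $S_-'$ really produces a bundle over~$S^1$. The concrete observation that makes this work (and which you should make explicit) is that since $\a(c_i)\neq 0$ one may take $\partial S$ transverse to each~$c_i$, so $S$ meets every component of~$\g(c)$ only in essential arcs; then for each boundary torus~$T$ the curves $S\cap T$ cut $T$ into annuli each carrying exactly one suture of~$\g'$, whence $\g'=\partial S\times I$ on the nose, $R_\pm(\g')=S$, and the regluing is the mapping torus identification. Without this, $R_\pm(\g')$ could a~priori be $S$ with extra annular collars from $R_\pm(\g(c))$ attached, and the bundle structure would not be immediate.

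One small slip: in your first attempt at \eqref{it:foli}~$\Rightarrow$~\eqref{it:fibred} you refer to ``the tori $R_\pm$'', but $R_\pm(\g(c))$ are annuli, not tori; the noncompact leaves accumulate on these annuli. This does not affect your ``cleanest route'', which avoids the issue.
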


\begin{proof}
The equivalence of~\eqref{it:Z} and~\eqref{it:foli} is a special case of Theorem~\ref{mainthm}.

Now we prove the equivalence of~\eqref{it:foli} and~\eqref{it:fibred}.
First, suppose that the map $p \colon M \to S^1$ is a fibration in the class~$\a$;
we denote by $\mF_p$ the corresponding depth~$0$ cooriented foliation of~$M$.
Since $\a(c_i) \neq 0$, we can arrange that~$c_i$ is transverse to~$\mF_p$ for
every $i \in \{1,\dots, k\}$. We view~$M$ as a manifold with right angle corners
along~$\partial \g(c)$, and the leaves of $\mF_p$ also have corners at~$\partial \g(c)$.
We denote by $A_i^\pm$ the part of~$R_\pm$ lying
in the component of~$\partial M$ containing~$c_i$. Then $A_i^\pm$ is an annulus,
and~$\mF_p$ restricts to~$A_i^\pm$ as a fibration with fibre~$I$. We glue the
products $A_i^\pm \times I$ to~$M$ via identifying $A_i^\pm \times \{0\}$ with~$A_i^\pm$
for every $i \in \{1,\dots, k\}$, and call the resulting 3-manifold~$M'$.
The orientation of~$M$ induces an orientation of~$M'$. Furthermore, let
\[
\g' := \g(c) \cup \bigcup_{i=1}^k (\partial A_i^\pm) \times I.
\]
We extend~$\mF_p$ to a foliation~$\mF$ of~$(M',\g')$ such that $A_i^\pm \times \{1\}$ is a compact leaf oriented as~$\pm \partial M'$
for every $i \in \{1,\dots,k\}$.
Furthermore, each leaf of the restriction of $\mF$ to $A_i^\pm \times [0,1)$ is diffeomorphic to $I \times \R_+$,
the interval $I \times \{0\}$ matches with a fibre of $\mF_p|_{A_i^\pm}$, and the arcs $I \times \{t\}$
spiral to $A_i^\pm\times \{1\}$ as $t \to \infty$. We can either spiral clockwise or counterclockwise,
exactly one of these will make the resulting foliation cooriented.
In particular, the coorientability of the foliation implies that $\mF|_{\g'}$ has no Reeb components.
There is a diffeomorphism $d \colon (M',\g') \to (M,\g(c))$ close to the obvious retraction from~$M'$ to~$M$,
and~$d(\mF)$ is an indecomposable taut depth one foliation of~$(M,\g(c))$ with $\l(\mF) = \a$.

In the opposite direction, assume that~$\mF$ is an indecomposable taut depth one foliation of~$(M,\g(c))$
such that $\l(\mF) = \a$. Again, we view~$M$ as a manifold with corners along~$\partial \g(c)$.
We claim that, in this case, $\mF$ is of the form described in the previous paragraph. More precisely,
we will show that there is a collar neighbourhood~$N$ of $R = R_+ \cup R_-$ such that the restriction
of~$\mF$ to~$\overline{M \setminus N}$ is a fibration. Indeed, since~$\mF$ is indecomposable,
the union of the compact leaves of~$\mF$ is~$R$, and hence $M \setminus R$ is the union of the depth one
leaves. The holonomy of each depth one leaf is trivial (cf.~Definition~\ref{def:holonomy}),
otherwise there would be leaves limiting on depth one leaves,
and hence Reeb stability implies that~$\mF$ restricts to a fibration of~$M \setminus R$
with leaf space~$S^1$ as~$M$ is connected. In particular, in each component~$\g_0$ of~$\g(c) \setminus R$, the foliation~$\mF$
restricts to a cooriented fibration over~$S^1$. This determines the foliation~$\mF|_{\g_0}$ uniquely up to isotopy
in the annulus~$\g_0$: each leaf is diffeomorphic to~$\R$, and spirals clockwise to~$R_+ \cap \g_0$ and counterclockwise
to~$R_- \cap \g_0$.

Let~$A$ be one of the two components of~$R$ that meets~$\g_0$. Since~$M$ has toroidal boundary,
$A$ is an annulus. We now  take an identification of $A$ with~$S^1 \times I$. Furthermore, let~$A \times I$ be a collar neighbourhood
of~$A$ in~$M$ such that for every point $(x,t) \in A \approx S^1 \times I$,
the arc $\{(x,t)\} \times I$ is transverse to~$\mF$.
Furthermore, we can assume that there is an $\eps > 0$ such that
\[
\nu = S^1 \times \{0\} \times [0,\eps]
\]
is a collar neighbourhood of~$A \cap \g_0$ in~$\g_0$ such that
$\partial \nu \setminus A = S^1 \times \{0\} \times \{\eps\}$ is a curve transverse to~$\mF$.
Then, for every $(x,t,s) \in S^1 \times I \times [0,\eps] \approx A \times [0,\eps]$, let 
\[
\psi(x,t,s) = (x,t,\eta(x,t,s)) \in S^1 \times I \times I,
\]
where $\eta$ denotes the third component of~$\psi$, and $t \mapsto \psi(x,t,s)$
is the unique curve of the above form tangent to~$\mF$ such that $\psi(x,0,s) = (x,0,s)$
(the lift of the curve $t \mapsto (x,t) \in A$ starting from $(x,0,s)$).
The map~$\psi$ is unique and well-defined if~$\eps$ is sufficiently small.
Then~$\psi$ defines a collar neighbourhood~$N_A$ of~$A$ on which~$\mF$ is the product $\mF|_\nu \times I$,
hence consists of leaves $\R \times I$ spiralling to~$A$, exactly as in the proof of \eqref{it:fibred} $\Rightarrow$ \eqref{it:foli}.
This way, we obtain a collar neighbourhood of each component of~$R$,
and the union of these we denote by~$N$. On~$\partial(\overline{M \setminus N})$, the foliation~$\mF$ restricts to
a fibration by circles. Finally, every leaf~$L$ of~$\mF|_{\overline{M \setminus N}}$ is compact,
as~$L$ is a closed subset of a depth one leaf of~$\mF$, so if a sequence~$(p_n)$ in~$L$ had no limit point in~$L$,
then it would have a subsequence converging to~$R$ (as this is the union of the depth~$0$ leaves),
contradicting the fact that each~$p_n$ lies outside the neighbourhood~$N$ of~$R$.

In this section, we are mostly interested in the equivalence of~\eqref{it:Z} and~\eqref{it:fibred}.
Note that this can also be proved in a  more direct way using some of the methods of this paper, not pertaining to foliations.
First, suppose that~\eqref{it:Z} holds. Then, as in the proof of Theorem~\ref{mainthm},
the class~$\a$ is dual to a nice decomposing surface~$S$ such that the decomposition
\[
(M,\g(c)) \rightsquigarrow^S (M',\g')
\]
yields a product sutured manifold~$(M',\g')$. Furthermore, since~$\a(c_i) \neq 0$ for every $i \in \{1,\dots,k\}$,
and because~$S$ is dual to~$\a$, we can assume that $\partial S$ is transverse to~$c$, and hence intersects
each component of~$\g(c)$ in essential arcs. For each component~$T$ of~$\partial M$, the set~$S \cap T$ consists
of parallel oriented curves that divide~$T$ into parallel annuli $A_1, \dots, A_l$. After
decomposing~$(M,\g(c))$ along~$S$, there will be exactly one suture in each~$A_i$, and we can take the~$A_i$
themselves to be the resulting components of~$\g'$.
%In particular, if we take $M$ with toroidal sutures~$\g_0 = \partial M$ and decompose along~$S$, we also obtain~$(M',\g')$, a product.
So $(M',\g') \approx (S \times I, \partial S \times I)$, and we obtain~$M$ by gluing $S \times \{0\}$ to $S \times \{1\}$.
In particular, $M$ is a fibre bundle over~$S^1$ with the surface~$S$, the dual to~$\a$, being one of the fibres.

Finally, we show that~\eqref{it:fibred} implies~\eqref{it:Z}. Assume that~$M$ fibres over~$S^1$, and that~$S$ is a fibre dual
to~$\a$. Again, we can suppose that~$c$ is transverse to~$S$. Note that~$S$ is a nice
decomposing surface in~$(M,\g(c))$. As above, the result of decomposing~$(M,\g(c))$
along~$S$ gives a sutured manifold~$(M',\g')$ diffeomorphic to~$(S \times I, \partial S \times I)$, which is taut.
So Theorem~\ref{thm nice} implies that
\[
\SFH_\a(M,\g(c)) \cong \SFH(M',\g') \cong \Z.
\]
This concludes the proof.
\end{proof}

Theorem \ref{thm:detectfibredphi} says, in particular, that given a connected,
irreducible 3-manifold with non-trivial toroidal boundary,
we can use sutured Floer homology to detect whether or not a given class $\a\in H^1(M)$ is fibred.
Nonetheless, the result is perhaps slightly unsatisfactory, as it does not say how one can determine
all fibred classes from sutured Floer homology at once.
Just using Theorem \ref{thm:detectfibredphi}  it is also not clear how one can  decide whether
there is a fibred class at all.

 The issue is that in general there is no decoration which `works for all non-zero $\alpha\in H^1(M)$'. More precisely, if we fix a
decoration~$c=\{c_1,\dots,c_k\}$ of~$M$, then for some classes~$\a \in H^1(M)$, we might have $\a(c_i) = 0$ for some $i \in \{1,\dots,k\}$.
Now let~$T$ be a component of~$\partial M$ with decoration $c_i$ such that $\a(c_i)=0$. If~$S$ is then a surface dual to~$\a$ in~$M$,
then $S \cap \partial T$ consists of say~$l$ parallel curves, all isotopic to~$c_i$.
Put~$S$ in a position where these~$l$ curves are all parallel to~$c_i$.
When~$S$ is a fibre of a fibration, and we decompose $(M,\g)$ along such an~$S$, we obtain $(M',\g')$, where $M' = S \times I$,
but on the component of $\partial S \times I$ containing~$c_i$, there will be~$3$ parallel components of~$\g'$.
Hence, according to \cite[Proposition~9.2]{Ju10}, if~$S \neq D^2$, then $\SFH(M',\g') \cong \left(\Z^2\right)^{\otimes s(\a)}$, where
\[
s(\a) = |\{\, i \in \{1,\dots,k\} \,\colon\, \a(c_i) = 0 \,\}|.
\]
In other words, if we first fix the decoration~$c$, condition~\eqref{it:fibred} does not imply~\eqref{it:Z},
unless~$s(\a) = 0$. On the other hand, if we know that $\SFH_\a(M,\g(c)) \cong \left(\Z^2 \right)^{\otimes s(\a)}$,
we cannot use Theorem~\ref{mainthmtechnical} to obtain a nice decomposing surface dual to~$\a$ that gives a
taut decomposition, as we might be stuck with closed components. However, if we put a restriction on the
homology of~$M$, we obtain the following.

\begin{proposition} \label{prop:fibred}
Let~$M$ be a connected, irreducible 3-manifold with non-empty toroidal boundary,
and such that the map $H_2(\partial M;\Q) \to H_2(M; \Q)$ is surjective.
Pick a decoration~$c = \{c_1,\dots,c_k\}$ of~$\partial M$ such that no component of $\partial M \setminus c$
is compressible, and let $\a \in H^1(M)$. Then the following are equivalent:
\begin{enumerate}
\item \label{it:i} The class~$\a$ is fibred.
\item \label{it:ii} $\SFH_\a(M,\g(c)) \cong \left(\Z^2 \right)^{s(\a)}$.
\end{enumerate}
\end{proposition}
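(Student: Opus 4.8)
The plan is to reduce this to Theorem~\ref{thm:detectfibredphi} by carefully tracking how closed components of a surface dual to $\a$ affect the sutured decomposition, using the homological hypothesis to control them. First I would prove \eqref{it:i}$\Rightarrow$\eqref{it:ii} directly: if $\a$ is fibred, let $F$ be a fibre, which is a well-groomed (indeed, incompressible, Thurston-norm minimising) properly embedded surface dual to $\a$. Since the map $H_2(\partial M;\Q)\to H_2(M;\Q)$ is surjective, we may assume, after discarding closed components without changing $[F]\in H_2(M,\partial M;\Q)$ and multiplying by a common denominator (or, more carefully, arguing integrally once a fibration is in hand), that $\partial F\neq\emptyset$; in fact for a genuine fibration one can always take $F$ connected with non-empty boundary, so $\partial F$ meets every component $T$ of $\partial M$. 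On the components $T$ with $\a(c_i)\neq 0$, we isotope $\partial F$ to be transverse to $c_i$, so decomposing along $F$ puts exactly one suture in each resulting annulus of $\g'$. On the $s(\a)$ components $T$ with $\a(c_i)=0$, the curves $\partial F\cap T$ are parallel to $c_i$, and decomposing produces three parallel sutures in the annulus containing $c_i$ (as explained in the text preceding the proposition). Hence $(M',\g')$ is a product $S\times I$ with extra sutures only on the $s(\a)$ tori of the second type, and by \cite[Proposition~9.2]{Ju10} together with Theorem~\ref{thm product}, $\SFH(M',\g')\cong(\Z^2)^{s(\a)}$. Theorem~\ref{thm nice} then gives $\SFH_\a(M,\g(c))\cong\SFH(M',\g')\cong(\Z^2)^{s(\a)}$.

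For the converse \eqref{it:ii}$\Rightarrow$\eqref{it:i}, I would first dispose of the case $s(\a)=0$, which is precisely Theorem~\ref{thm:detectfibredphi}: if $\SFH_\a(M,\g(c))\cong\Z$ then $\a$ is fibred. For $s(\a)>0$ the strategy is to \emph{change the decoration}. Given the hypothesis that no component of $\partial M\setminus c$ is compressible, and using that $\a$ restricted to each boundary torus on which $\a$ is non-zero is automatically non-zero, pick a new decoration $c'=\{c_1',\dots,c_k'\}$ with $\a(c_i')\neq 0$ for \emph{all} $i$; this is possible exactly when $\a$ restricted to every boundary torus is non-zero, so the key point is to show that the hypothesis $\SFH_\a(M,\g(c))\cong(\Z^2)^{s(\a)}$, together with $H_2(\partial M;\Q)\twoheadrightarrow H_2(M;\Q)$, forces $\a|_T\neq 0$ for every component $T$. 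The homological surjectivity is what makes this work: it implies that any class in $H_2(M,\partial M;\Q)$ is detected by its boundary, so if $\a|_T=0$ for some $T$ then $\a$ is dual to a surface avoiding $T$ entirely, and then changing the decoration near $T$ would strictly decrease $s(\a)$ without changing $\SFH_\a$ up to the predicted tensor factor — iterating, we reach a decoration where $s=0$. Concretely, I would compare $\SFH_\a(M,\g(c))$ and $\SFH_\a(M,\g(c'))$: each extra sutured annulus of the ``three parallel sutures'' type contributes a $\Z^2$ tensor factor (by \cite[Proposition~9.2]{Ju10}), so $\SFH_\a(M,\g(c))\cong(\Z^2)^{s(\a)}$ is equivalent to $\SFH_\a(M,\g(c'))\cong\Z$ for a decoration $c'$ with $s_{c'}(\a)=0$. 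Then Theorem~\ref{thm:detectfibredphi} applied with decoration $c'$ yields that $\a$ is fibred.

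The main obstacle I anticipate is making the ``change of decoration'' step rigorous: one needs a clean statement that for two decorations $c,c'$ of $M$ agreeing away from a single torus $T$, with $\a(c_i)=0$ but $\a(c_i')\neq 0$, the sutured Floer homologies satisfy $\SFH_\a(M,\g(c))\cong\SFH_\a(M,\g(c'))\otimes\Z^2$, and that no component of $\partial M\setminus c'$ is compressible (so that Theorem~\ref{thm:detectfibredphi}'s hypotheses persist). This should follow from the gluing/decomposition behaviour of $\SFH$ when one changes the number of sutures on a boundary torus — essentially \cite[Proposition~9.2]{Ju10} and \cite[Proposition~9.14]{Ju06} on how $\SFH$ behaves under adding a pair of parallel sutures — but the bookkeeping with the $H_1(M)$-torsor of $\spinc$-structures and the class $\a$ (which lives in $H^1(M)$, independent of the decoration) needs care, as does checking that the ``no compressible component of $\partial M\setminus c'$'' condition can be arranged simultaneously with $\a(c_i')\neq 0$ for all $i$. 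Once that comparison lemma is in place, the proposition is immediate from Theorem~\ref{thm:detectfibredphi}.
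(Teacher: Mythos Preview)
Your forward direction \eqref{it:i}$\Rightarrow$\eqref{it:ii} is essentially the paper's argument, though the paper is more careful: it simply takes a fibre~$S$, isotopes it to be parallel to each~$c_i$ with~$\a(c_i)=0$, decomposes, and reads off $\SFH(M',\g')\cong(\Z^2)^{s(\a)}$ from \cite[Proposition~9.2]{Ju10} and Theorem~\ref{thm nice}. Your talk of ``discarding closed components'' and ``multiplying by a common denominator'' is unnecessary here --- a fibre already has no closed components.

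Your converse \eqref{it:ii}$\Rightarrow$\eqref{it:i} has a genuine gap, and it is exactly the step you flag as the ``main obstacle''. The comparison you want, $\SFH_\a(M,\g(c))\cong\SFH_\a(M,\g(c'))\otimes\Z^2$ when~$c$ and~$c'$ differ by \emph{changing the slope} of the decoration on a single torus, is not what \cite[Proposition~9.2]{Ju10} or \cite[Proposition~9.14]{Ju06} give you: those results concern adding or removing a pair of sutures \emph{parallel to an existing suture}, not replacing two sutures of one slope by two of another. There is no general tensor formula relating $\SFH$ of a manifold with two different suture slopes on a torus boundary. Relatedly, your argument that the hypothesis forces~$\a|_T\neq 0$ for every boundary torus~$T$ is circular: if~$\a|_T=0$ then \emph{every} curve on~$T$ satisfies $\a(c_i)=0$, so no change of decoration on~$T$ can lower~$s(\a)$, and you never reach~$s=0$.

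The paper does not change the decoration at all. It uses Gabai to produce a well-groomed surface~$S$ dual to~$\a$, invokes the homological hypothesis to see that the closed components of~$S$ are null-homologous (since $H_2(M;\Q)\to H_2(M,\partial M;\Q)$ is zero) and discards them to get~$S'$, and then decomposes $(M,\g(c))$ along~$S'$ to obtain~$(M',\g')$ with $\SFH(M',\g')\cong(\Z^2)^{s(\a)}$ by Theorem~\ref{thm nice}. The point is that \emph{after decomposing}, the extra sutures near each~$c_i$ with $\a(c_i)=0$ really are parallel to the others (since $\partial S'\cap T_i$ is parallel to~$c_i$), so now \cite[Proposition~9.2]{Ju10} legitimately applies: removing two of the three parallel sutures at each such place gives~$(M',\nu)$ with $\SFH(M',\nu)\cong\Z$, hence a product, hence~$M$ fibres. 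In short, the suture manipulation happens on the decomposed manifold~$M'$, where it is a parallel-suture move, not on~$M$, where it would be a slope change.
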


Note that the assumption that  $H_2(\partial M;\Q) \to H_2(M; \Q)$ is surjective
is equivalent to saying that $M$ is the exterior of a link in a rational homology sphere.

\begin{proof}
First, suppose that the class~$\a$ is fibred. Then let~$S$ be a fibre of a fibration such that~$[S]$
is dual to~$\a$. Isotope~$S$ such that it is transverse to~$c$, and parallel to~$c_i$ whenever~$\a(c_i) = 0$.
We saw above that if we decompose~$(M,\g(c))$ along~$S$, then we obtain a sutured manifold~$(M',\g')$
such that $M \approx S \times I$, and $\g'$ consists of~$\partial S \times I$, except that there are three
parallel sutures for each component of~$\partial S \times I$ that contains a curve~$c_i$ for which~$\a(c_i) = 0$.
Since no component of~$\partial M \setminus \g(c)$ is compressible, $(M',\g')$ is not~$D^2 \times I$ with three
parallel sutures, which is not taut and hence has~$\SFH(M',\g') = 0$. So~\cite[Proposition~9.2]{Ju10}
and Theorem~\ref{thm nice} imply that
\[
\SFH_\a(M,\g) \cong \SFH(M',\g') \cong \left(\Z^2 \right)^{s(\a)}.
\]
This shows that~\eqref{it:i} implies~\eqref{it:ii}.

Now suppose that~\eqref{it:ii} holds. Since no component of $\partial M \setminus c$ is compressible,
the sutured manifold~$(M,\g(c))$ is tight. Hence, according to Gabai~\cite{Ga83}, there exists
a well-groomed decomposing surface~$S$ such that $PD[S] = \a$ (in particular, the result of the decomposition is taut).
As usual, we isotope~$S$ such that~$\partial S$ is transverse to~$c$, and that it is parallel to
each curve~$c_i$ for which~$\a(c_i) = 0$.
Let~$S_0$ be the union of the closed components of~$S$.
Since the map $H_2(\partial M;\Q) \to H_2(M;\Q)$ is surjective, the map
$j_* \colon H_2(M;\Q) \to H_2(M,\partial M;\Q)$ is zero, and hence $S_0$ is $0$-homologous
in~$H_2(M,\partial M;\Q)$, and so also in $H_2(M,\partial M)$ (as the latter is torsion-free).
So $S' = S \setminus S_0$ is also dual to~$\a$. Furthermore, $S'$ will also produce a taut
sutured manifold that we denote~$(M',\g')$. By Theorem~\ref{thm nice} and our assumption~\eqref{it:ii},
\[
\SFH(M',\g') \cong \left(\Z^2 \right)^{s(\a)}.
\]
If $\a(c_i) = 0$, then the component~$A_i$ of~$\partial M \setminus N(S)$ containing~$c_i$ will have three
parallel sutures in~$(M',\g')$.
According to~\cite[Proposition~9.2]{Ju10}, removing two of the three parallel sutures from~$\g'$
for each of the $s(\a)$ curves~$c_i$
for which $\a(c_i) = 0$ will result in a sutured manifold~$(M',\nu)$ with~$\SFH(M',\nu) \cong \Z$.
Hence $(M',\nu)$ is a product with $R_\pm(\nu) \approx S'$, and so~$M' \approx S' \times I$. Furthermore, $M$ is obtained by
gluing $S' \times \{0\}$ and $S' \times \{1\}$ via some diffeomorphism, and so~$\a$ is a fibred class.
\end{proof}

We will now describe a method for detecting all fibred classes (and whether there is a
fibred class at all) even when the map $H_2(\partial M;\Q) \to H_2(M;\Q)$ is not surjective,
and along the way, we will also consider real fibred classes.
We say that a class $\a \in H^1(M;\R)$ is \emph{fibred} if it can be represented by a nowhere vanishing closed 1-form.
It is well-known that, for integral classes, the two notions of fibredness agree. We now denote by
\[ \FF(M):=\{ \a \in H^1(M;\R)\,:\, \a\mbox{ is fibred}\}\]
the set of all fibred classes.

In the following,  given a decoration $c=\{c_1,\dots,c_k\}$ for
a connected, irreducible $3$-manifold with non-empty toroidal boundary, we write
\[
\CC(M,c): = \{ \a\in H^1(M;\R)\,\colon\, \a(c_i)\ne 0\mbox{ for }i=1,\dots,k\}.
\]
This set contains all classes for which $\SFH$ can detect whether they are fibred,
according to Theorem~\ref{thm:detectfibredphi}. More precisely, let
\[
\MM(M,c): = \{ \a\in \CC(M,c)\,\colon\, \SFH_\a(M,c)\cong \Z\};
\]
the integral classes in $\MM(M,c)$ are exactly the fibred classes in $\CC(M,\g)$.
By definition, $\MM(M,c) \subset \CC(M,c) \subset H^1(M;\R)$.

Before we state the next theorem, we recall that if~$M$ is a 3-manifold,
then it is a consequence of a standard Poincar\'e duality argument that
for any toroidal boundary component~$T$, we have $\rk(\im(H_1(T)\to H_1(M)))\geq 1$.

\begin{theorem}\label{thm:fibredcones}
Let $M$  be a connected, irreducible $3$-manifold such that $\partial M$ is a non-empty union of tori $T_1,\dots,T_k$, labeled such that
\[
\rk \left(\im(H_1(T_i)\to H_1(M)) \right) =
\begin{cases}
2 & \text{  for $i \in \{1,\dots,l\}$, and}\\
1 & \text{ for $i \in \{l+1,\dots,k\}$.}
\end{cases}
\]
For $i \in \{1,\dots,l\}$, we pick curves $c_i^1$, $c_i^2 \subset T_i$
such that they span a rank two subspace of $H_1(M)$,
and for $i \in \{l+1,\dots,k\}$, we pick a curve $c_i \subset T_i$ that is non-torsion in~$H_1(M)$.
Then
\[
\FF(M) = \bigcup_{\{\eps_1,\dots,\eps_l\} \in \{1,2\}^l} \MM(M,c_1^{\eps_1},\dots,c_l^{\eps_l},c_{l+1},\dots,c_k).
\]
%where we take the union over all $\{\eps_1,\dots,\eps_l\} \in \{1,2\}^l$.
\end{theorem}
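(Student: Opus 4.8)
The plan is to prove the two inclusions separately, using Theorem~\ref{thm:detectfibredphi} as a black box together with the observation that the decorations $c_i^1,c_i^2$ on a ``rank-two'' torus $T_i$ cannot simultaneously be killed by a nonzero cohomology class, whereas a single decoration $c_i$ on a ``rank-one'' torus is never killed by a fibred class.

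\textbf{Step 1: fibred classes are never zero on the chosen curves.} Let $\a \in \FF(M)$. For each $i \in \{l+1,\dots,k\}$, the restriction of $\a$ to $T_i$ is nonzero (this is noted in the excerpt: the restriction of a fibred class to any boundary component is nonzero), and since $c_i$ is chosen non-torsion in $H_1(M)$, its image in $H_1(T_i;\Q)$ generates $\im(H_1(T_i)\to H_1(M))\otimes\Q$, which is one-dimensional; hence $\a(c_i)\neq 0$. For each $i \in \{1,\dots,l\}$, the two curves $c_i^1,c_i^2$ span a rank-two subspace of $H_1(M)$, so $\a$ cannot vanish on both; choose $\eps_i\in\{1,2\}$ with $\a(c_i^{\eps_i})\neq 0$. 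With this choice of $(\eps_1,\dots,\eps_l)$, the class $\a$ lies in $\CC(M,c_1^{\eps_1},\dots,c_l^{\eps_l},c_{l+1},\dots,c_k)$, and moreover $s(\a)=0$ for this decoration. This will be the one genuinely geometric point, but it is short.

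\textbf{Step 2: $\FF(M)\subset$ RHS.} First treat integral classes. Given an integral $\a\in\FF(M)$, pick the decoration as in Step~1. Since $\a$ is fibred and every $\a$-value on a decorating curve is nonzero, Theorem~\ref{thm:detectfibredphi} (equivalence of \eqref{it:Z} and \eqref{it:fibred}) gives $\SFH_\a(M,c_1^{\eps_1},\dots,c_k)\cong\Z$, i.e.\ $\a\in\MM(M,c_1^{\eps_1},\dots,c_k)$, which lies in the right-hand union. To pass to real classes, use that $\FF(M)$ is an open cone in $H^1(M;\R)$ (Thurston) whose intersection with any top-dimensional face of the Thurston norm ball is convex, and that each set $\MM(M,c_1^{\eps_1},\dots,c_k)$ is, by Theorem~\ref{thm:detectfibredphi}, an open cone containing a full sublattice of integral fibred classes in $\CC(M,c_1^{\eps_1},\dots,c_k)$; since integral points are dense in each such cone and the decompositions $\CC(M,c_1^{\eps_1},\dots,c_k)$ for varying $\{\eps_i\}$ cover all of $H^1(M;\R)$ minus the finitely many hyperplanes $\{\a(c_i^1)=\a(c_i^2)=0\}$ (which meet $\FF(M)$ trivially by Step~1), the real statement follows by continuity/denseness.

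\textbf{Step 3: RHS $\subset\FF(M)$.} Conversely, suppose $\a$ lies in some $\MM(M,c_1^{\eps_1},\dots,c_k)$. If $\a$ is integral, then $\a\in\CC(M,\dots)$ with $\SFH_\a\cong\Z$, so Theorem~\ref{thm:detectfibredphi} immediately gives that $\a$ is fibred, i.e.\ $\a\in\FF(M)$. For real $\a$, $\MM(M,c_1^{\eps_1},\dots,c_k)$ is an open cone and its integral points are dense in it and all fibred; hence every real point of it is a limit of integral fibred classes lying in a single cone component of $\FF(M)$, so $\a\in\FF(M)$ as well. This finishes the reverse inclusion.

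\textbf{Main obstacle.} The nontrivial part is not any single step above but rather making the real-coefficients bookkeeping airtight: one must check that the hyperplanes $\{\a : \a(c_i^1)=\a(c_i^2)=0\}$ removed in passing between $\CC$-chambers really do not intersect $\FF(M)$ (this is exactly the rank-two hypothesis on $T_i$, via the fact that a real fibred class restricts nontrivially to every boundary torus), and that $\MM(M,c)$ is genuinely an open cone so that denseness of integral points lets one conclude for all real classes. Granting Theorem~\ref{thm:detectfibredphi} and the standard openness/convexity properties of $\FF(M)$, everything else is formal.
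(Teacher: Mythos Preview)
Your proposal follows essentially the same route as the paper: reduce to integral classes via Theorem~\ref{thm:detectfibredphi}, then pass to real classes using that both sides are open cones with enough integral points. The paper packages the integral case exactly as you do (it writes $\FF^{\Z}(M)\cap\CC^{\Z}(M,c)=\MM^{\Z}(M,c)$ and takes the union over the $2^l$ decorations), and your Step~1 is precisely the verification the paper leaves as ``follows easily from the definitions.''

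Two points where your write-up should be tightened. First, in Step~1 for the rank-two tori, the sentence ``$c_i^1,c_i^2$ span a rank-two subspace of $H_1(M)$, so $\a$ cannot vanish on both'' is not a valid inference by itself (a nonzero $\a$ can certainly kill a rank-two subgroup when $b_1(M)\ge 3$); you need to invoke, as you in fact do later, that $\a|_{T_i}\neq 0$ and that $c_i^1,c_i^2$ generate $H_1(T_i;\Q)$. Second, the passage to real coefficients is looser than it needs to be: integral points are not dense in an open cone, and ``$\a$ is a limit of points in an open set'' does not put $\a$ in that set. The clean statement, which is what the paper uses, is that $\FF(M)$ (Thurston) and each $\MM(M,c)$ (directly from the finiteness of $\supp(M,\g(c))$) are finite unions of \emph{rational} open polyhedral cones; two such unions that agree on $H^1(M)$ must coincide, because the common refining rational hyperplane arrangement cuts $H^1(M;\R)$ into cells, every cell contains an integral point, and membership of a cell in either side is detected by any one of its points. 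Once you phrase the real extension this way, your Steps~2 and~3 go through without the hand-waving about density and ``single cone components.''
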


\begin{proof}
We write
\[  \CC(M):=\{ \a\in H^1(M;\R)\,\colon\, \a|_{T_i}\ne 0\mbox{ for }i=1,\dots,k\}.\]
For any of the sets $\CC(M)$, $\CC(M,c)$, $\FF(M)$, and $\MM(M,c)$,
we use the same expression decorated with a superscript~$\Z$
to indicate their intersection with~$H^1(M)$. With these conventions,
we can reformulate the equivalence of~\eqref{it:Z} and~\eqref{it:fibred} in Theorem~\ref{thm:detectfibredphi} as
\begin{equation}
\label{equ:cfh} \FF^{\Z}(M)\cap\CC^{\Z}(M,c) = \MM^{\Z}(M,c).
\end{equation}
We denote by~$D$ the set of the~$2^l$ decorations
$\{c_1^{\eps_1},\dots,c_l^{\eps_l},c_{l+1},\dots,c_k\}$ with $\{\eps_1,\dots,\eps_l\}$ in~$\{1,2\}^l$.

We first prove the following claim.

\begin{claim}
\[  \FF^{\Z}(M)=\bigcup_{c\in D} \MM^{\Z}(M,c).\]
\end{claim}

It follows from the earlier discussion that $\FF^{\Z}(M)\subset \CC^{\Z}(M)$.
It also follows easily from the definitions that
\[  \CC^{\Z}(M)=\bigcup_{c\in D} \CC^{\Z}(M,c).\]
If we combine these observations with  (\ref{equ:cfh}), we obtain that
\[ \ba{rcl} \FF^{\Z}(M)&=&\FF^{\Z}(M)\cap \CC^{\Z}(M)\\[2mm]
&=&\FF^{\Z}(M)\cap \bigcup_{c\in D} \CC^{\Z}(M,c)\\[2mm]
&=& \bigcup_{c\in D} \left( \FF^{\Z}(M) \cap \CC^{\Z}(M,c) \right)\\[2mm]
&=& \bigcup_{c\in D} \MM^{\Z}(M,c).\ea \]
This concludes the proof of the claim.

By a rational open half-space in $H^1(M;\R) \cong H^1(M) \otimes \R$, we mean a subset that is described
by a strict rational linear inequality. By a rational cone in~$H^1(M;\R)$, we mean
the intersection of finitely many rational open half-spaces in~$H^1(M;\R)$.
Thurston~\cite{Th86} showed that~$\FF(M)$ is a (possibly empty) union of rational cones.
It is a straightforward consequence of the definitions that each~$\MM(M,c)$ is a rational cone.

Together with the above claim, we now see that
$\FF(M)$ and $\bigcup_{c\in D} \MM(M,c)$ are unions of rational cones
that agree on $H^1(M)$. It follows that the two sets are in fact the same.
\end{proof}

\begin{corollary}
With the notation of Theorem~\ref{thm:fibredcones}, we have $\FF(M,\g) \neq \emptyset$
if and only if $\SFH(M,\g(c),\s) \cong \Z$ for an extremal $\spinc$-structure $\s \in \spinc(M,\g(c))$
for one of the $2^l$ decorations $c = \{c_1^{\eps_1},\dots,c_l^{\eps_l},c_{l+1},\dots,c_k\}$
for $\{\eps_1,\dots,\eps_l\} \in \{1,2\}^l$.
\end{corollary}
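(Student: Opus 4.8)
The plan is to deduce the corollary from Theorem~\ref{thm:fibredcones} by translating the condition ``$\MM(M,c)\neq\emptyset$'' into a statement about extremal $\spinc$-structures. Write $D$ for the set of the $2^l$ decorations $\{c_1^{\eps_1},\dots,c_l^{\eps_l},c_{l+1},\dots,c_k\}$ with $\{\eps_1,\dots,\eps_l\}\in\{1,2\}^l$ appearing in Theorem~\ref{thm:fibredcones}; that theorem gives $\FF(M)=\bigcup_{c\in D}\MM(M,c)$, so $\FF(M)\neq\emptyset$ precisely when $\MM(M,c)\neq\emptyset$ for some $c\in D$. Thus the whole statement reduces to the following equivalence, which I would prove for each fixed $c\in D$: $\MM(M,c)\neq\emptyset$ if and only if there is an extremal $\s\in\spinc(M,\g(c))$ with $\SFH(M,\g(c),\s)\cong\Z$.

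For the ``only if'' part I would start from a class $\a\in\MM(M,c)$, so that $\SFH_\a(M,\g(c))\cong\Z$. Since this group is by definition the direct sum of the finitely generated abelian groups $\SFH(M,\g(c),\t)$ over all $\t$ that pair minimally with $\a$, and $\Z$ has no nontrivial splitting as a direct sum of two nonzero finitely generated abelian groups, exactly one of these summands is nonzero and it is $\cong\Z$; let $\s$ be the corresponding $\spinc$-structure. Then $\s\in\supp(M,\g(c))$, $\SFH(M,\g(c),\s)\cong\Z$, and $\s$ is the only element of $\supp(M,\g(c))$ that pairs minimally with $\a$, so $\a$ witnesses that $\s$ is extremal.

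For the ``if'' part, given an extremal $\s$ with $\SFH(M,\g(c),\s)\cong\Z$ and a witnessing class $\a_0\in H^1(M;\R)$, I would unwind the definitions of ``extremal'' and ``pairs minimally'' to get $\a_0(\s-\t)<0$ for every $\t\in\supp(M,\g(c))\setminus\{\s\}$, which carves out a nonempty open cone $U\subset H^1(M;\R)$. The step I expect to need the most care is producing a witness that \emph{also} lies in $\CC(M,c)$: because the decoration curves were chosen in Theorem~\ref{thm:fibredcones} to be non-torsion in $H_1(M)$, each set $\{\a:\a(c_i)=0\}$ is a proper hyperplane, so $\CC(M,c)$ is open and dense in $H^1(M;\R)$ and therefore meets the open set $U$. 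Choosing $\a\in U\cap\CC(M,c)$, the $\spinc$-structure $\s$ remains the unique element of $\supp(M,\g(c))$ pairing minimally with $\a$, whence $\SFH_\a(M,\g(c))\cong\SFH(M,\g(c),\s)\cong\Z$, and since $\a\in\CC(M,c)$ this gives $\a\in\MM(M,c)$. Chaining the two implications together with the reduction from the first paragraph then yields the corollary.
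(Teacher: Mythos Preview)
Your proof is correct and follows essentially the same route as the paper's: reduce via Theorem~\ref{thm:fibredcones} to the equivalence between $\MM(M,c)\neq\emptyset$ and the existence of an extremal $\s$ with $\SFH\cong\Z$, then use the indecomposability of $\Z$ for one direction and an openness/density argument for the other. In fact you are more explicit than the paper at the key step where one needs a witnessing class lying in $\CC(M,c)$: the paper only remarks that $\{\b:\SFH_\b(M,c)\cong\Z\}$ is open and concludes, whereas you spell out that the decoration curves being non-torsion makes $\CC(M,c)$ open and dense, which is exactly what is needed.
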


\begin{proof}
We first suppose that  $\FF(M,\g) \neq \emptyset$. It then follows
immediately from Theorem~\ref{thm:fibredcones}
that, for one of the given~$2^l$ decorations~$c$, there exists an extremal $\spinc$-structure $\s \in \spinc(M,\g(c))$
such that $\SFH(M,c,\s) \cong \Z$.

Now suppose that, for one of the $2^l$-decorations~$c$, there exists an extremal $\spinc$-structure~$\s$
such that $\SFH(M,c,\s) \cong \Z$.
This implies that there exists an $\a \in H^1(M;\R)$ such that
\[ \SFH_{\a}(M,c)\cong \Z.\]
It is straightforward to see that the set
\[ \{ \b\in H^1(M;\R)\sm \{0\} \,:\, \SFH_{\b}(M,c)\cong \Z\}\]
is open in~$H^1(M;\R)$. Hence, there also exists a class $\b\in \CC(M,c)$ with $\SFH_{\b}(M,c) \cong \Z$.
It thus follows from Theorem~\ref{thm:fibredcones} that~$M$ is fibred.
\end{proof}

We finally note that if $M$ is a 3-manifold such that the boundary consists of a single torus~$T$,
then $ \rk(\im(H_1(T)\to H_1(M)))=1$. We can thus record the following special case of Theorem~\ref{thm:fibredcones}.

\begin{corollary}
Let~$M$  be a connected, irreducible $3$-manifold such that the boundary consists of a single torus~$T$.
We pick a curve~$c$ on~$T$ that is non-torsion in~$H_1(M)$.
Then
\[  \FF(M)= \MM(M,c).\]
\end{corollary}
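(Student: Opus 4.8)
The plan is to derive this as the degenerate case $k=1$, $l=0$ of Theorem~\ref{thm:fibredcones}, so essentially no new argument is required beyond unwinding the notation.

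First I would record the rank computation that makes the hypotheses of Theorem~\ref{thm:fibredcones} trivial to verify here. If $\bdd M=T$ is a single torus, then by the standard ``half lives, half dies'' argument the kernel of $H_1(T;\Q)\to H_1(M;\Q)$ has dimension $\tfrac12\dim H_1(T;\Q)=1$, and hence $\rk\big(\im(H_1(T)\to H_1(M))\big)=1$. This is exactly the observation recorded immediately before the statement of Theorem~\ref{thm:fibredcones}, so there is nothing to prove; I would simply cite it.

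Next I would match up the data. We have a single boundary torus $T_1=T$ with $\rk\big(\im(H_1(T_1)\to H_1(M))\big)=1$, so in the labeling of Theorem~\ref{thm:fibredcones} we take $k=1$ and $l=0$, and $T_1$ lies in the range $i\in\{l+1,\dots,k\}$. For such an $i$ the theorem instructs us to choose a curve $c_1\subset T_1$ that is non-torsion in $H_1(M)$, which is precisely the curve $c$ of the corollary. Since $l=0$, the index set $\{1,2\}^l=\{1,2\}^0$ consists of a single point, so the union on the right-hand side of the conclusion of Theorem~\ref{thm:fibredcones} collapses to the single term $\MM(M,c_{l+1},\dots,c_k)=\MM(M,c)$. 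Thus Theorem~\ref{thm:fibredcones} directly gives $\FF(M)=\MM(M,c)$.

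I do not expect any genuine obstacle: the sole mathematical input is the classical rank-one computation for a single torus boundary, and the rest is a purely formal specialization of Theorem~\ref{thm:fibredcones}. In particular no separate argument for independence of the choice of $c$ is needed, since Theorem~\ref{thm:fibredcones} already asserts its conclusion for an arbitrary non-torsion curve $c\subset T$.
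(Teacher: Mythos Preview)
Your proposal is correct and follows exactly the paper's approach: the paper simply records that for a single torus boundary one has $\rk(\im(H_1(T)\to H_1(M)))=1$, and then cites this corollary as the special case $k=1$, $l=0$ of Theorem~\ref{thm:fibredcones}. Your write-up just makes the specialization explicit.
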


\end{document}